\pgfplotsset{compat=newest}
\DeclareSymbolFont{bbold}{U}{bbold}{m}{n}
\DeclareSymbolFontAlphabet{\mathbbold}{bbold}
\newcommand{\N}{\mathbb{N}}
\newcommand{\R}{\mathbb{R}}
\newcommand{\C}{\mathbb{C}}
\newcommand{\K}{\mathbb{K}}
\newcommand{\id}{\operatorname{id}}
\newcommand{\dd}{\mathrm{d}}
\newcommand{\cH}{\mathcal{H}}
\newcommand{\cA}{\mathcal{A}}
\newcommand{\cK}{\mathcal{K}}
\newcommand{\cL}{\mathcal{L}}
\newcommand{\cM}{\mathcal{M}}
\newcommand{\cB}{\mathcal{B}}
\newcommand{\cV}{\mathcal{V}}
\renewcommand{\epsilon}{\varepsilon}
\DeclareMathAccent{\Circ}{\mathalpha}{operators}{"17}
\newcommand{\interior}[1]{\operatorname{\Circ{#1}}}
\DeclareMathOperator{\spt}{spt}
\DeclareMathOperator{\dive}{div}
\DeclareMathOperator{\curl}{curl}
\DeclareMathOperator{\curlr}{curl_{\textnormal{red}}}
\DeclareMathOperator{\curlrd}{curl_{\textnormal{red}}^\diamond}
\DeclareMathOperator{\curld}{{curl}^\diamond}
\newcommand{\curlc}{\operatorname{\mathring{curl}}}
\DeclareMathOperator{\grad}{grad}
\newcommand{\gradc}{\operatorname{\mathring{grad}}}
\newcommand{\gradcd}{\operatorname{\mathring{grad}^\diamond}}
\renewcommand{\ker}{\operatorname{ker}}
\DeclareMathOperator{\ran}{ran}
\DeclareMathOperator{\dom}{dom}
\renewcommand{\Re}{\operatorname{Re}}
\DeclareMathOperator{\lin}{lin}
\newcommand{\rdd}{{\rm red}}
\let\phi\varphi
\let\leq\leqslant
\let\geq\geqslant
\def\@row#1,{#1\@ifnextchar;{\@gobble}{&\@row}}
\def\@matrix{%
    \expandafter\@row\my@arg,;%
    \@ifnextchar({\\ \get@in@paren{\@matrix}}{\after@matrix}%
    }
\def\matrixtype#1#2#3{%
    \ifmmode\def\after@matrix{\end{#2}\right#3}%
    \else\def\after@matrix{\end{#2}\right#3$}$\fi
    \left#1\begin{#2}\get@in@paren{\@matrix}%
    }
\def\@column#1,{#1\@ifnextchar;{\@gobble}{\\ \@column}}
\newcommand\vect{}
\def\svect(#1){\left(\begin{smallmatrix}\@column#1,;\end{smallmatrix}\right)}
\def\vect{\get@in@paren{\@vect}}
\def\@vect{\left(\begin{matrix}\expandafter\@column\my@arg,;\end{matrix}\right)}
\def\get@in@paren#1({\def\my@arg{}\def\my@rest{}\def\after@get{#1}\get@arg}
\let\e@a\expandafter
\def\get@arg#1){\e@a\kl@test\my@rest#1(;}
\def\kl@test#1(#2;{\e@a\def\e@a\my@arg\e@a{\my@arg#1}%
                   \ifx:#2:\let\my@exec\after@get
                   \else\let\my@exec\get@arg
                        \e@a\def\e@a\my@arg\e@a{\my@arg(}%
                        \def@rest#2;%
                   \fi\my@exec}
\def\def@rest#1(;{\def\my@rest{#1\kl@zu}}
\def\kl@zu{)}
\newcommand\MyPairedDelimiter{%
  \@ifstar{\My@Paired@Delimiter{{}}}
          {\My@Paired@Delimiter{}}%
}
\newcommand\My@Paired@Delimiter[4]{%
  \newcommand#2{%
    \@ifstar{\start@PD{#1}{\delimitershortfall=-1sp}{#3}{#4}}
            {\start@PD{#1}{}{#3}{#4}}%
  }%
}
\newcommand\start@PD[5]{%
  #1\mathopen{\mathpalette\put@delim@helper{\put@delim{#2}{#3}{.}{#5}}}%
  #5%
  \mathclose{\mathpalette\put@delim@helper{\put@delim{#2}{.}{#4}{#5}}}%
}
\newcommand\put@delim@helper[2]{%
  \hbox{$\m@th\nulldelimiterspace=0pt #2#1$}%
}
\newcommand\put@delim[5]{%
  \setbox\z@\hbox{$\m@th#5{#4}$}%
  \setbox\tw@\null
  \ht\tw@\ht\z@ \dp\tw@\dp\z@
  #1#5%
  \left#2\box\tw@\right#3%
}
\MyPairedDelimiter*{\abs}{\lvert}{\rvert}
\MyPairedDelimiter*{\norm}{\lVert}{\rVert}
\MyPairedDelimiter{\set}{\{}{\}}
\newcounter{intthm}
\theoremstyle{plain} 
\newtheorem{theorem}{Theorem}[section]
\newtheorem{thmABC}{Theorem}
\newtheorem{corollary}[theorem]{Corollary}
\newtheorem{lemma}[theorem]{Lemma}
\newtheorem{proposition}[theorem]{Proposition}
\theoremstyle{definition}
\newtheorem{example}[theorem]{Example}
\newtheorem{problem}[theorem]{Problem}
\newtheorem{remark}[theorem]{Remark}
\numberwithin{equation}{section}
\begin{document}

\title{Nonlocal $H$-convergence for topologically nontrivial domains}

\author{Marcus Waurick\footnote{Institute for Applied Analysis,
Faculty for Mathematics and Computer Science, 
TU Bergakademie Freiberg,
Freiberg, Germany, Email:\ {\tt marcus.wau\rlap{\textcolor{white}{hugo@egon}}rick@math.\rlap{\textcolor{white}{balder}}tu-freiberg.de}}}

\date{}

\maketitle

\begin{abstract} 
The notion of nonlocal $H$-convergence is extended to domains with nontrivial topology, that is, domains with non-vanishing harmonic Dirichlet and/or Neumann fields. If the space of harmonic Dirichlet (or Neumann) fields is infinite-dimensional, there is an abundance of choice of pairwise incomparable  topologies generalising the one for topologically trivial $\Omega$. It will be demonstrated that if the domain satisfies the Maxwell compactness property the corresponding natural version of the corresponding (generalised) nonlocal $H$-convergence topology has no such ambiguity. Moreover, on multiplication operators the nonlocal $H$-topology coincides with the one induced by (local) $H$-convergence introduced by Murat and Tartar. The topology is used to obtain nonlocal homogenisation results including convergence of the associated energy for electrostatics. The derived techniques prove useful to deduce a new compactness criterion relevant for nonlinear static Maxwell problems. \end{abstract}

\textbf{Keywords} homogenisation, meta-materials, electrostatics, static Maxwell problem, $G$-convergence, $H$-convergence, nonlocal $H$-convergence

\textbf{MSC 2020} Primary 35B27; Secondary  35Q61; 74Q05

\medmuskip=4mu plus 2mu minus 3mu
\thickmuskip=5mu plus 3mu minus 1mu
\belowdisplayshortskip=9pt plus 3pt minus 5pt






\newpage
\tableofcontents

\newpage

\section{Introduction}\label{s:int}

Many physical phenomena such as the classical wave equation, Maxwell's equations or the heat equation consist of coefficients describing the specific properties of the material the effects are considered in. These specifics are also provided in so-called constitutive relations. Classically, these relations are represented by (matrix-valued) functions depending on the material point. With this it is possible to model a composite consisting of two materials with, say, different thermal conductivities. In order to model more sophisticated physical phenomena it is not enough to restrict oneself to matrix-valued functions modelling the material's properties. In fact, also nonlocal effects need to be taken into account. This is particularly the case for the description of interacting particles, where the McKean--Vlasov equations are used, see, e.g., \cite{Carillo18}. We refer to the references in \cite{Carillo18} for a multitude of applications and to \cite[Example 2.7]{W18_NHC} for a linear version of these equations. 

The modelling of the electro-magnetic effects of metamaterials characterised by nonlocal constitutive relations is provided for instance in \cite{Gorlach2016,Ciattoni2015,Mendez2017}. Moreover, a bespoke homogenisation process has been provided in these references to have a workable model at hand. The type of equations considered are of the form
\[
    \curl (1+\rho_n*)\curl E_n = f\in L_2(\Omega)^3 
\]
\textcolor{black}{ in three spatial dimensions} with $E_n\in L_2(\Omega)^3 $ such that $ \curl E_n\in L_2(\Omega)^3$ subject to \textcolor{black}{ homogeneous boundary conditions for the tangential trace of $E_n$ (also known as perfect electric conductor boundary condition, we shall use `electric boundary condition' as a short-hand, see \cite[eq.~(1.6)]{LS19})}, $(\rho_n)_n$ a sequence of integral kernels defined on $\Omega\times \Omega$ and \textcolor{black}{all $\curl$ operations are taken in the distributional sense}.

The sources \cite{Gorlach2016,Ciattoni2015,Mendez2017} were the reason for the invention of the notion of \emph{nonlocal $H$-convergence} in \cite{W18_NHC}. This notion describes the functional analytic properties the homogenisation processes both local and nonlocal problems have in common. In order to briefly describe the notion of both nonlocal and local $H$-convergence, we let $\Omega\subseteq\R^3$ be open and bounded with connected complement. Then, \textcolor{black}{ given $0<\alpha<\beta$,} consider a sequence $(\varepsilon_n)_n$ in \textcolor{black}{\begin{equation}\label{eq:Mab}
M(\alpha,\beta;\Omega)\coloneqq \{ a\in \cM(\alpha,\beta); a\in L_\infty(\Omega)^{3 \times 3}\}.
\end{equation}}This sequence is said to \textbf{(locally) $H$-converge} to $\varepsilon\in M(\alpha,\beta;\Omega)$, if for all $f\in H^{-1}(\Omega)(=H_0^1(\Omega)')$ the following properties are satisfied. Consider the sequence of solutions $(u_n)_n$ in $H_0^1(\Omega)$ with
\[
   -\dive \varepsilon_n(x)\grad u_n = f
\]in the distributional sense. Then $(u_n)_n$ weakly converges to $u\in H_0^1(\Omega)$ and $(\varepsilon_n\grad u_n)_n$ weakly converges to $\varepsilon\grad u\in L_2(\Omega)^3$, where $u$ satisfies
\[
   -\dive \varepsilon(x)\grad u = f.
\]$M(\alpha,\beta;\Omega)$ can now be endowed with the topology induced by (local) $H$-convergence. This set becomes a metrisable compact Hausdorff space, see \cite{Murat1997,Tartar2009}. It can be shown that for the corresponding second order $\curl$-problem a similar property holds, see also \cite{Nicaise2020}. For this, let $\curl$ be the distributional $\curl$ derivative realised as a closed, densely defined operator in $L_2(\Omega)^3$ with maximal domain, that is, \textcolor{black}{
\begin{equation}\label{eq:curlinto}
   \curl \colon \dom(\curl)\subseteq L_2(\Omega)^3\to L_2(\Omega)^3,\phi\mapsto \nabla\times \phi,
\end{equation}where $\nabla\times$ acts in the distributional sense and 
\[
   \dom(\curl) = \{ \phi\in L_2(\Omega)^3; \nabla\times \phi \in  L_2(\Omega)^3\}.
\] It is not difficult to show that $\curl$ is closed and densely defined. Assuming $\ran(\curl)\subseteq L_2(\Omega)^3$ to be closed and using $ \ker(\curl)^\bot$, the orthogonal complement of $\ker(\curl)$ in $L_2(\Omega)^3$,  we define the \emph{reduced $\curl$-operator}, $\curlr$, by
\[
  \curlr \colon \dom(\curl)\cap \ker(\curl)^\bot \subseteq  \ker(\curl)^\bot \to \ran(\curl), \phi\mapsto \curl \phi. 
\] In contrast to $\curl$, $\curlr$ is one-to-one and onto.} We denote by $H^1(\curlr)$ the domain of $\curlr$ endowed with the graph norm. Now, if $(\varepsilon_n)_n$ locally $H$-converges to some $\varepsilon$, then for all $g\in H^{-1}(\curlr)\coloneqq H^1(\curlr)'$ and (uniquely determined) $v_n\in H^1(\curlr)$ satisfying
\[
   \curlrd \varepsilon_n(x)^{-1}\curlr v_n =g,
\]where $\curlrd$ is \textcolor{black}{the dual of $\curlr$ being interpreted as a bounded linear operator, see \eqref{eq:Cdi} below\footnote{In the present situation, we interpret
\[
  \curlr \colon H^1(\curlr) \to \ran(\curl), \phi\mapsto \curl \phi
\]
and obtain
\[
  \curlrd \colon \ran(\curl) \to H^{-1}(\curlr), \psi\mapsto (H^1(\curlr)\ni \phi\mapsto \langle \psi, \curl \phi\rangle).
\]
Note that this is paralleled by the similar construction for $\grad$ on $H_0^1(\Omega)$ and $\dive$ with target space $H^{-1}(\Omega)$}}, we obtain $v_n \to v$ weakly and $\varepsilon_n^{-1}\curlr v_n \to \varepsilon^{-1}\curlr v$ weakly in the respective spaces $H^1(\curlr)$ and $L_2(\Omega)^3$ with $v\in H^1(\curlr)$ uniquely solving
\[
     \curlrd \varepsilon(x)^{-1}\curlr v =g.
\]

In \cite{W18_NHC} this consequence of local $H$-convergence is used in order to generalise the class of admissible coefficients to also include coefficients of convolution type mentioned above. Indeed, basing on the fact that for $\Omega$ with connected complement the space of harmonic Dirichlet fields, that is, 
\[
\cH_D(\Omega)\coloneqq\ker(\dive)\cap \ker(\curl^*)
\] is $0$-dimensional, we have the decomposition
\begin{equation}\label{eq:HDint}
   L_2(\Omega)^3 = \ran(\gradc)\oplus \ran(\curl).
\end{equation}
\textcolor{black}{Here,
\begin{align*}
  \gradc &\colon H_0^1(\Omega) \subseteq L_2(\Omega)\to L_2(\Omega)^3, u\mapsto \nabla u, \\
  \dive & \colon \dom(\dive)\subseteq L_2(\Omega)^3 \to L_2(\Omega), \phi \mapsto \nabla\cdot \phi,
\end{align*}
with $\dom(\dive)=\{\phi \in L_2(\Omega)^3; \nabla\cdot \phi\in L_2(\Omega)^3\text{ in the distributional sense}\}$ and $\curl^*$ is the $L_2(\Omega)^3$-adjoint from $\curl$ given in \eqref{eq:curlinto}.}
Thus, by \eqref{eq:HDint}, any bounded linear operator $\varepsilon\in \cB(L_2(\Omega)^3)$ admits a representation as $2$-by-$2$ matrix according to this decomposition. Then, in the present situation of vanishing harmonic Dirichlet fields, a \textcolor{black}{bounded and equi-coercive, see \eqref{eq:sMabint} below,} sequence of bounded linear operators $(\varepsilon_n)_n$ is said to converge to $\varepsilon$ \textbf{in the nonlocal $H$-sense}, if \begin{align*}
 &\begin{cases}
  \forall f\in H^{-1}(\Omega)=H_0^1(\Omega)' &\\
  \forall u_n\in H_0^1(\Omega) &\\
  -\dive \varepsilon_n \grad u_n = f&
\end{cases} \Longrightarrow \begin{cases}u_n\to u \in H_0^1(\Omega)\text{ weakly}& \\ \varepsilon_n \grad u_n\to \varepsilon \grad u \in L_2(\Omega)^3 \text{ weakly}\end{cases} \text{ and } \\
& \begin{cases}
  \forall g\in H^{-1}(\curlr) &\\
  \forall v_n\in H^1(\curlr) &\\
  \curlrd \varepsilon_n^{-1} \curl v_n = g&
\end{cases}\Longrightarrow \begin{cases} v_n\to v \in H^1(\curlr)\text{ weakly}&\\ \varepsilon_n^{-1} \curl v_n\to \varepsilon^{-1} \curl v \in L_2(\Omega)^3\text{ weakly}&\end{cases}
\end{align*}

The introduction of the concept of nonlocal $H$-convergence leads then to a well-defined operator topology, $\tau_{\textrm{nlH}}$, on (a subset of) $\cB(L_2(\Omega)^3)$, which is weaker than both strong operator and norm topology but incomparable to the weak operator topology. It can be shown that $\tau_{\textrm{nlH}}$ is \emph{not} a linear topology. In any case, the developed concept hinges on the vanishing of harmonic Dirichlet fields. It is the aim of the present article to understand the situation of when the space of harmonic Dirichlet fields is non-zero. Domains with existing non-zero harmonic Dirichlet fields or harmonic Neumann fields, $\cH_N(\Omega) \coloneqq \ker(\gradc)\cap\ker(\curl)$, are called \textbf{topologically nontrivial}. \textcolor{black}{In \cite{Picard1982,Pauly2021} for continuous, (weak) Lipschitz domains, the dimension of $\cH_N(\Omega)$ and $\cH_D(\Omega)$ has been computed in terms of the topology of $\Omega$. More precisely, for a bounded $\Omega$, the number of bounded components of the complement is the dimension of $\cH_D(\Omega)$, the number of `handles' is the dimension of $\cH_N(\Omega)$, see \cite[Theorem 1]{Picard1982} or \cite[Section 2]{Pauly2021} and Figure \ref{fig:hDN} for examples.}
\begin{figure}[t]
\begin{minipage}{0.32\textwidth}
\centering
\tdplotsetmaincoords{0}{0} 
\begin{tikzpicture}[tdplot_main_coords]
 \shade[ball color=black!20, opacity=1] (0,0,0) circle (1);
\end{tikzpicture}
\end{minipage}
\begin{minipage}{0.32\textwidth}
\centering
  \begin{tikzpicture}[yscale=cos(70)]
    \draw[double distance=5mm] (0:1) arc (0:180:1);
    \draw[double distance=5mm] (180:1) arc (180:360:1);
  \end{tikzpicture}
  \end{minipage}
\begin{minipage}{0.32\textwidth}
\centering
 \tdplotsetmaincoords{0}{0} 
\begin{tikzpicture}[tdplot_main_coords]
 \shade[ball color=black!20, opacity=1] (0,0,0) circle (1);
  \clip (0,0) circle (0.6);
  \shade[ball color=white] (0,0,0) circle (0.6);
\end{tikzpicture}
\end{minipage}
\caption{(From left to right) The ball is topologically trivial. 
The torus has $1$-dimensional space of harmonic Neumann fields with $\cH_D(\Omega)=\{0\}$.  For the annular region of a ball taken out an inner ball, $\cH_D(\Omega)$ is one-dimensional and $\cH_N(\Omega)=\{0\}$.}\label{fig:hDN}
\end{figure}

\textcolor{black}{Turning to the definition of nonlocal $H$-convergence, we note that `nonlocal' is related to `local' the same way as `nonlinear' is related to `linear'. We define, for a bounded linear operator $T$ acting on some Hilbert space, $\Re T\coloneqq \frac12(T+T^*)$. Defining the nonlocal analogue of $M(\alpha,\beta;\Omega)$,} for $0<\alpha<\beta$, we introduce
\begin{equation}\label{eq:sMabint}
\cM(\alpha,\beta)\coloneqq \{a\in \cB(L_2(\Omega)^3); \Re a\geq \alpha, \Re a^{-1}\geq 1/\beta\}.
\end{equation} We say that $\tau$ is a \textbf{nonlocal $H$-topology}\footnote{We provide a slightly more general setting in Section \ref{sec:nHc} below.} on $\cM(\alpha,\beta)$, if there exists a closed subspace $\cH_0\subseteq L_2(\Omega)^3$ such that $\ran(\gradc)\subseteq \cH_0$ and $\ran(\curl)\subseteq \cH_0^\bot\eqqcolon \cH_1$ such that identifying any operator $a\in \cM(\alpha,\beta)$ with $(a_{jk})_{j,k\in \{0,1\}}\in \cB(\cH_0\oplus \cH_1)$ the topology $\tau$ is the initial topology induced by
\begin{equation}\label{eq:indnon}
   a\mapsto a_{00}^{-1},\ a\mapsto a_{00}^{-1}a_{01},\ a\mapsto a_{10}a_{00}^{-1},\ a\mapsto a_{11}-a_{10}a_{00}^{-1}a_{01}
\end{equation} with the target spaces all endowed with corresponding weak operator topologies. \textcolor{black}{In the topological trivial setting, there is only one choice for $\cH_0$ and $\cH_1$ here. Then, it has been shown in \cite{W18_NHC} that this topology  coincides with local $H$-convergence on $M(\alpha,\beta;\Omega)$. In order to get the general idea why this definition makes sense and somehow captures homogenisation picture the one-dimensional situation and consider the classical homogenisation problem for periodic coefficients. Then the harmonic mean is the effective coefficient and is obtained by the $L_\infty$-weak-$*$-limit of the inverses of the coefficients. Expressed in operator-theoretic terms, this corresponds to the convergence of the inverses in the weak operator topology of the corresponding $L_2$-space the problem is considered in. In the present three-dimensional situation a similar effect occurs as with stratified media, where one needs to take into account Schur-complements of some of the components of the coefficient matrix, see, e.g., \cite[Theorem 5.12]{Cioranescu1999}. Hence, the more involved topology induced by the mappings \eqref{eq:indnon} that needs to be considered here. Note that the detailed definition is provided in Section \ref{sec:nHc}.}

The first major result of this article reads as follows (part (a) turns out to be rather trivial, whereas part (b) is the main part of the theorem). \textcolor{black}{In order to anticipate potential future developments in mathematical research, we use the implicit assumption on $\Omega$ to be such that the ranges of $\gradc$ and $\curl$ are closed. For this note that the assumption on $\Omega$ is implicit. We note that $\Omega$ bounded with Lipschitz boundary is sufficient, see Example \ref{ex:rangeclosed} for the details.}
\begin{thmABC}[consistence and uniqueness of nonlocal $H$-topologies]\label{thm:exunitop} (a) For all $\Omega\subseteq \R^3$ open with $\ran(\gradc),\ran(\curl)\subseteq L_2(\Omega)^3$ closed, define 
\[
\mathfrak{T} \coloneqq \{ \tau; \tau \text{ nonlocal $H$-topology on }\cM(\alpha,\beta)\}.
\]
Given $(a_n)_n$, $a$ in $\cM(\alpha,\beta)$ and if $\cH_D(\Omega)=\{0\}$, then for all $\tau_\Omega\in \mathfrak{T}$
\[
    a_n \to a \text{ in }\tau_\Omega \iff     a_n \to a \text{ in }\tau_{\textnormal{nlH}}.
\]
(b) If $\dim(\cH_D(\Omega))<\infty$, $\mathfrak{T}$ is a singleton.
\end{thmABC}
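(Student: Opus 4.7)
The plan is to parametrise the members of $\mathfrak{T}$ by closed subspaces of $\cH_D(\Omega)$ and then exploit the finite-dimensionality of $\cH_D(\Omega)$ to show that the induced topology is independent of the parameter.

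Since $\ran(\gradc)$ and $\ran(\curl)$ are mutually orthogonal (an integration-by-parts argument) and $\cH_D(\Omega)=(\ran(\gradc)+\ran(\curl))^\perp$, one has the orthogonal Helmholtz decomposition
\[
  L_2(\Omega)^3=\ran(\gradc)\oplus\cH_D(\Omega)\oplus\ran(\curl).
\]
Therefore every admissible $\cH_0$ has the form $\cH_0=\ran(\gradc)\oplus H$ for a closed subspace $H\subseteq\cH_D(\Omega)$; by hypothesis $H$ is finite-dimensional. Writing $\tau_H$ for the corresponding nonlocal $H$-topology, it suffices to show $\tau_H=\tau_{\{0\}}$ for each admissible $H$, since this gives $\tau_H=\tau_{H'}$ for any admissible $H,H'$ by transitivity.

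The key reformulation is to bundle the four maps defining $\tau_H$ into a single \emph{partial inverse} $\Phi_H(a)\in\cB(L_2(\Omega)^3)$, the bounded operator sending $(y_0,x_1)$ to $(x_0,y_1)$ whenever $a(x_0,x_1)=(y_0,y_1)$ in the decomposition $L_2(\Omega)^3=\cH_0\oplus\cH_1$. Then $\tau_H$ is precisely the initial topology on $\cM(\alpha,\beta)$ induced by $a\mapsto\Phi_H(a)$ with WOT on the target. A direct block-matrix calculation shows that partial inverses are transitive: $\Phi_H(a)$ is obtained from $\Phi_{\{0\}}(a)$ by performing one further partial inversion along the $H$-slot, the block to be inverted being the Schur complement $a_{HH}-a_{HD}a_{DD}^{-1}a_{DH}$ arising from the compression of $a$ to $\ran(\gradc)\oplus H$. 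The $\cM(\alpha,\beta)$-hypothesis guarantees that this compression is uniformly coercive, hence that the Schur complement is uniformly invertible, and so the conversion map is well defined on all of $\cM(\alpha,\beta)$ in both directions.

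The decisive point is that the additional partial inversion takes place on the finite-dimensional space $H$. On $\cB(H)$ the weak operator, strong operator, and operator-norm topologies all coincide, and inversion is norm-continuous at uniformly invertible operators; on $\cB(X,H)$ and $\cB(H,X)$ the weak and strong operator topologies agree on norm-bounded subsets. Consequently, composing a WOT-convergent uniformly bounded sequence with factors passing through the finite-dimensional $H$ preserves WOT convergence, so both conversion maps $\Phi_{\{0\}}\leftrightarrow\Phi_H$ are continuous with respect to WOT on $\Phi_{\{0\}}(\cM(\alpha,\beta))$ and $\Phi_H(\cM(\alpha,\beta))$ respectively. This yields $\tau_H=\tau_{\{0\}}$, completing the proof. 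The main obstacle I foresee is the verification of the transitivity identity at the level of block matrices together with the bookkeeping of which products genuinely traverse the finite-dimensional $H$-slot; once that identity is in hand, the finite-dimensional continuity argument applies routinely.
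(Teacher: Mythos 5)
Your approach is essentially the same as the paper's: you parametrise the admissible decompositions by the subspace $H=\cH_0\ominus\ran(\gradc)\subseteq\cH_D(\Omega)$, express the Schur maps for $\cH_0=\ran(\gradc)\oplus H$ as block-matrix expressions in the Schur maps for $\ran(\gradc)$ with an extra inversion localised on the finite-dimensional $H$, and exploit the collapse of operator topologies in finite dimensions. The paper isolates exactly this in Theorem~\ref{thm:finite} (nested $\cH_0\subseteq\cK_0$ with finite-dimensional $\cK_0\ominus\cH_0$), carries out the block computation explicitly, and invokes Lemma~\ref{lem:prodweak} to handle products of WOT-convergent bounded nets threaded through the compact operator $\id_H$; the paper also handles the boundedness bookkeeping via the dedicated topology $\tau_{\textnormal{b}}$ and the observation that $\cM(\alpha,\beta)$ is $\tau$-bounded for every decomposition, which is the same restriction you invoke implicitly by working with norm-bounded families. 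Your ``partial inverse'' $\Phi_H$ is simply a repackaging of the four Schur maps, so no conceptual difference there.

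One small inaccuracy: you claim that on $\cB(H,X)$ (with $H$ finite-dimensional and $X$ infinite-dimensional) the weak and strong operator topologies agree on bounded sets; that is false — a bounded sequence $A_n\in\cB(H,X)$ converges WOT iff $A_ne_i\rightharpoonup Ae_i$ for a basis $(e_i)$ of $H$, which does not imply norm convergence of $A_ne_i$. Fortunately this is not what the argument actually requires. What is needed is the asymmetric fact that on $\cB(X,H)$ WOT and SOT coincide on bounded sets (because the target is finite-dimensional), combined with norm continuity of inversion on $\cB(H)$; then in each block expression one feeds the SOT-convergent rightmost factor through the norm-continuous finite-dimensional inversion, and the remaining WOT-bounded left factor poses no obstruction. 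This is precisely the mechanism encapsulated by the paper's Lemma~\ref{lem:prodweak}, which phrases it more symmetrically by inserting the compact operator $\id_H$ and using adjoints. So the gap is one of formulation rather than substance, and the proposal is correct once that statement is replaced by the one-sided version or by Lemma~\ref{lem:prodweak}.
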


We provide three applications of the Theorem \ref{thm:exunitop} and the topology constructed, next. For these applications, we use the following standing assumption. We recall that there is no assumption included on the topology of $\Omega$ particularly allowing for the cases exemplified in Figure \ref{fig:hDN}.
\begin{center}
 {\it Let $\Omega\subseteq \R^3$ open, bounded with Lipschitz boundary and $\tau_\Omega$ be the unique topology on $\cM(\alpha,\beta)$ as in (b) of Theorem \ref{thm:exunitop}.}
\end{center}
The first application concerns the homogenisation problem associated to electrostatics. The main challenge is to understand the relationship of the coefficients to the projection of the modified harmonic Dirichlet field
\[
\cH_{\varepsilon_n}\coloneqq\ker(\dive\varepsilon_n)\cap \ker(\curl^*)
\] In the course of this manuscript, we will define a (not necessarily orthogonal) continuous projection
\[
   \pi_{\varepsilon} \colon L_2(\Omega)^3 \to \cH_{\varepsilon}.
\] The corresponding projection for $\varepsilon=\id$ is denoted by $\pi_D$ (orthogonally) projecting onto $\cH_D(\Omega)$. We shall establish the following result \textcolor{black}{on the homogenisation for electrostatic equations. We refer to \cite[12-1]{FL11} for the equations of electrostatics; to \cite[eq.~(1.6)]{LS19} for the corresponding boundary condition for a perfect electric conductor, which is conveniently addressed by the `electric boundary condition' as a short hand in the following. Note that the involvement of the operator $\curlrd$ is a possible functional analytic implementation of this boundary condition; see also Remark \ref{rem:rdc} below. As it has been shown in \cite{Picard1982}, in order to warrant well-posedness for the equations in questions one needs to control also the space of harmonic Dirichlet vector-fields; therefore, it is required to include the operators $\pi_D$ and $\pi_\varepsilon$. The expression $\langle \varepsilon E,E\rangle$ will be referred to as the `energy' or `stored energy' for electrostatics (it coincides with the actual stored energy in this situation up to a factor of $1/2$, see \cite[p 64]{E93}).}
\begin{thmABC}[nonlocal homogenisation theorem for electrostatics and convergence of energy]\label{thm:hometstatic} Let $(\varepsilon_n)_n$, $\varepsilon$ in $\cM(\alpha,\beta)$ with $\varepsilon_n\to \varepsilon$ in $\tau_\Omega$.  For all $f\in H^{-1}(\Omega)$, $g\in H^{-1}(\curlr)$ and $x \in \cH_{D}(\Omega)$ and $E_n \in L_2(\Omega)^3$ solving
\[
   \dive \varepsilon_n E_n =f,\ \curlrd E_n = g,\ \pi_D\pi_{\varepsilon_n} E_n = x,
\]we obtain
\[
  E_n \to E\text{ weakly in }L_2(\Omega)^3,
\]where $E\in L_2(\Omega)^3$ is the unique solution of
\[
   \dive \varepsilon E  =f,\ \curlrd E  = g,\ \pi_D\pi_\varepsilon E = x.
\]Moreover, $\varepsilon_nE_n\to \varepsilon E$ weakly and
\[
   \langle \varepsilon_n E_n,E_n\rangle_{L_2} \to    \langle \varepsilon E,E\rangle_{L_2}.
\]
\end{thmABC}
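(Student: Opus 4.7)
The plan is to work in the Helmholtz-type splitting underlying the topology $\tau_\Omega$, translate the three constraints into a block linear system, and then pass to the limit using the defining weak convergences of $\tau_\Omega$. By Theorem~\ref{thm:exunitop}(b) I may fix the splitting $L_2(\Omega)^3 = \cH_0 \oplus \cH_1$ with $\cH_0 \coloneqq \ran(\gradc) \oplus \cH_D(\Omega)$ and $\cH_1 \coloneqq \ran(\curl)$, and write each $\varepsilon_n$ as a $2\times 2$ block operator $(a^n_{jk})_{j,k\in\{0,1\}}$ in $\cB(\cH_0 \oplus \cH_1)$. Correspondingly, decompose $E_n = u_n + w_n$ with $u_n \in \cH_0$ and $w_n \in \cH_1$, and further $u_n = u_n^{\mathrm{g}} + u_n^{\mathrm{D}}$ with $u_n^{\mathrm{g}} \in \ran(\gradc)$ and $u_n^{\mathrm{D}} \in \cH_D(\Omega)$.

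Since $\curlrd$ annihilates $\cH_0$ and restricts to a topological isomorphism $\cH_1 \to H^{-1}(\curlr)$, the constraint $\curlrd E_n = g$ forces $w_n$ to equal the fixed element $w \in \cH_1$ determined by $g$ alone. Analogously, $\dive$ annihilates $\cH_D(\Omega) \oplus \cH_1$ and is an isomorphism $\ran(\gradc) \to H^{-1}(\Omega)$; hence $\dive \varepsilon_n E_n = f$ pins the orthogonal projection of $a^n_{00} u_n + a^n_{01} w$ onto $\ran(\gradc)$ to a fixed $F \in \ran(\gradc)$. The remaining constraint $\pi_D \pi_{\varepsilon_n} E_n = x$ prescribes the $\cH_D(\Omega)$-content of $E_n$ through the modified harmonic projection. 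Coercivity $\Re \varepsilon_n \geq \alpha$, together with the finite-dimensionality of $\cH_D(\Omega)$, yields a uniform $L_2$-bound on $E_n$, so along some subsequence $E_n \to E$ weakly in $L_2(\Omega)^3$.

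Passing to the limit is where $\tau_\Omega$-convergence enters: by definition, the four Schur-type maps $(a^n_{00})^{-1}$, $(a^n_{00})^{-1} a^n_{01}$, $a^n_{10}(a^n_{00})^{-1}$, and $a^n_{11} - a^n_{10}(a^n_{00})^{-1} a^n_{01}$ converge in the weak operator topology. Rewriting the block equation in Schur form, so that $u_n$ is expressed through these operators applied to the fixed data $F$, $w$, and the $\cH_D(\Omega)$-constraint, weak convergence of the operators on fixed vectors yields the analogous limiting system for $E$. Uniqueness of the limit problem, which follows from $\varepsilon \in \cM(\alpha,\beta)$, upgrades subsequential to full convergence. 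Weak convergence $\varepsilon_n E_n \to \varepsilon E$ then follows blockwise: the $\ran(\gradc)$-part is constantly $F$; the $\cH_D(\Omega)$-part lies in a finite-dimensional subspace where weak and strong coincide; and the $\cH_1$-component $a^n_{10} u_n + a^n_{11} w$ is handled by the remaining two Schur operators combined with the weak convergence of $u_n$.

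For the energy, decompose
\[
\langle \varepsilon_n E_n, E_n\rangle_{L_2} = \langle \varepsilon_n E_n, u_n^{\mathrm{g}}\rangle + \langle \varepsilon_n E_n, u_n^{\mathrm{D}}\rangle + \langle \varepsilon_n E_n, w\rangle.
\]
Writing $u_n^{\mathrm{g}} = \gradc \phi_n$ with $\phi_n \to \phi$ weakly in $H_0^1(\Omega)$, the first summand equals $-\langle f, \phi_n\rangle_{H^{-1}(\Omega),H_0^1(\Omega)}$ by $\dive \varepsilon_n E_n = f$, and hence converges. The second converges because $u_n^{\mathrm{D}}$ lives in the finite-dimensional $\cH_D(\Omega)$ (so weak convergence there is strong) while $\varepsilon_n E_n \to \varepsilon E$ weakly. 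The third converges because $w$ is fixed. The main obstacle I anticipate is a careful treatment of the nonorthogonal projector $\pi_{\varepsilon_n}$ in the constraint $\pi_D \pi_{\varepsilon_n} E_n = x$: one must identify $\pi_{\varepsilon_n}$ in terms of quantities controlled by $\tau_\Omega$ and show that $\pi_D \pi_{\varepsilon_n}$ passes to $\pi_D \pi_\varepsilon$ on the limit $E$. The finite-dimensionality of $\cH_D(\Omega)$ from the Maxwell compactness property will be essential here, effectively reducing this subtle convergence to a finite-dimensional one.
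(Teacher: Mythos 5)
Your route is genuinely different from the paper's. The paper proves the weak convergences by writing down an explicit solution formula in the $\varepsilon_n$\emph{-dependent} Helmholtz decomposition $L_2(\Omega)^3 = \ran(\gradc)\dot{+}\varepsilon_n^{-1}\ran(\curl)\dot{+}\cH_{\varepsilon_n}$ (Theorem~\ref{thm:solform}), translates the constraint $\pi_D\pi_{\varepsilon_n}E_n=x$ via Remark~\ref{rem:wpmod} and Corollary~\ref{cor:reformpieps}, and passes to the limit term by term via Corollary~\ref{cor:nonlquadr}; the energy convergence is then obtained by invoking the div-curl lemma (Theorems~\ref{thm:dcl} and \ref{thm:conenergy}). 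You instead work in the \emph{orthogonal} splitting $\ran(\gradc)\oplus\cH_D(\Omega)\oplus\ran(\curl)$ and do blockwise Schur manipulations; the observations that the $\ran(\curl)$-component of $E_n$ is forced to be a fixed $w$ by $\curlrd E_n = g$ alone, that the $\ran(\gradc)$-component of $\varepsilon_n E_n$ is forced to be a fixed $F$ by $f$, and that the energy pairing can be computed by hand from the three constraints (so that the div-curl lemma is bypassed), are all nice and, as far as they go, correct.

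However, the proposal has a substantial gap precisely where you flag one, and it is not a side issue but the crux of the theorem. You never bring the third constraint $\pi_D\pi_{\varepsilon_n}E_n = x$ into the block framework, and without it the limit $E$ is not identified: the two pinned projections ($w$ in $\ran(\curl)$ and $F$ in $\ran(\gradc)$) determine the $\cH_0$-component of $E_n$ only up to an element of $\cH_D(\Omega)$ plus the cross-coupling $(a^n_{00})^{-1}a^n_{01}w$, and the $\cH_D(\Omega)$-ambiguity is exactly what the third constraint must resolve. In the paper this is handled by the reformulation $\pi_{\varepsilon_n}E_n = (1-\gradc(\dive\varepsilon_n\gradc)^{-1}\dive\varepsilon_n)x$ (Corollary~\ref{cor:reformpieps}, Remark~\ref{rem:wpmod}) together with Corollary~\ref{cor:nonlquadr}(c),(d), which show exactly that the resulting vector passes to the limit weakly. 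Without a replacement for this step, the claim ``weak convergence of the operators on fixed vectors yields the analogous limiting system'' does not identify $E$, and consequently neither $E_n\to E$ nor $\varepsilon_nE_n\to\varepsilon E$ is established. Your energy argument then also stalls: it presupposes both weak limits and, in the first summand, the weak $H_0^1$-convergence $\phi_n\rightharpoonup\phi$, which you assert but do not derive — in the paper this follows from the explicit formula $\gradc\phi_n = \gradc(\dive\varepsilon_n\gradc)^{-1}f$ and Theorem~\ref{thm:TW-Mana}, not from coercivity alone. Similarly, the uniform $L_2$-bound on $E_n$ is claimed from coercivity and $\dim\cH_D(\Omega)<\infty$, but your block formulation does not make that bound visible; the paper gets it from the uniform boundedness of the solution operators appearing in Theorem~\ref{thm:solform}. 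In short: the skeleton is a viable alternative, but you would still need to (i) encode $\pi_D\pi_{\varepsilon_n}$ explicitly in the $\tau_\Omega$-controlled Schur quantities (essentially re-proving Corollary~\ref{cor:reformpieps} and Corollary~\ref{cor:nonlquadr}(c)-(d) in your notation), and (ii) supply the a priori bounds, before the energy computation can be run.
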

The actual result is more general and the solution theory for this kind of system of equations is based on the solution theory developed in \cite{TW14_FE}. Since in \cite{TW14_FE} nonlinear equations have been considered, possible nonlinear generalisations similar to the ones in \textcolor{black}{\cite{Picard1990}} may be provided in future. The main technique for the solution theory relies on a Helmholtz type decomposition previously known for self-adjoint and coercive coefficients. 

As a second application, we show that the introduced nonlocal topology coincides with the one of local $H$-convergence on the space of multiplication operators 
\[M(\alpha,\beta;\Omega)= \{ a\in \cM(\alpha,\beta); a\in L_\infty(\Omega)^{3 \times 3}\}.
\]
\begin{thmABC}[compatibility of nonlocal $H$-convergence with local $H$-convergence]\label{thm:locn} For $(a_n)_n$, $a$ in $M(\alpha,\beta;\Omega)$ the following conditions are equivalent:
\begin{enumerate}
  \item[(i)] $a_n\to a$ in the local $H$-sense;
  \item[(ii)] $a_n\to a$ in $\tau_\Omega$.
\end{enumerate}
\end{thmABC}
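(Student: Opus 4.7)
Throughout I fix the splitting $\cH_0 := \ran(\gradc)\oplus\cH_D(\Omega)$ and $\cH_1 := \ran(\curl)$; this is a legitimate choice since $\Omega$ bounded Lipschitz forces $\dim\cH_D(\Omega)<\infty$, and by Theorem~\ref{thm:exunitop}(b) this choice induces the unique topology $\tau_\Omega$ on $\cM(\alpha,\beta)$. The essential structural feature I will repeatedly use is $\cH_1\subseteq\ker(\dive)$.

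For the direction $(ii)\Rightarrow(i)$, I would fix $f\in H^{-1}(\Omega)$ and $u_n\in H_0^1(\Omega)$ with $-\dive a_n\grad u_n=f$; setting $v_n:=\grad u_n\in\ran(\gradc)\subseteq\cH_0$, the block decomposition $a_nv_n=(a_n)_{00}v_n+(a_n)_{10}v_n$ together with $(a_n)_{10}v_n\in\cH_1\subseteq\ker(\dive)$ yields $\dive((a_n)_{00}v_n)=-f$. Hence $w_n:=(a_n)_{00}v_n$ splits as $w_n=w^\ast+p_n$, where $w^\ast\in\ran(\gradc)$ is the unique element satisfying $\dive w^\ast=-f$ and $p_n\in\cH_D(\Omega)$. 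Uniform coercivity of $a_n$ bounds $(v_n)$, hence $(w_n)$, hence $(p_n)$ in the finite-dimensional space $\cH_D(\Omega)$; therefore $p_n\to p$ in norm along a subsequence, and $w_n\to w:=w^\ast+p$ strongly. The hypotheses $(a_n)_{00}^{-1}\to a_{00}^{-1}$ and $(a_n)_{10}(a_n)_{00}^{-1}\to a_{10}a_{00}^{-1}$ in the weak operator topology, combined with strong convergence of $w_n$, then give $v_n=(a_n)_{00}^{-1}w_n\rightharpoonup a_{00}^{-1}w=:v$ and $a_nv_n=w_n+(a_n)_{10}(a_n)_{00}^{-1}w_n\rightharpoonup w+a_{10}a_{00}^{-1}w=av$. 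Closedness of $\ran(\gradc)$ forces $v=\grad u$ for some $u\in H_0^1(\Omega)$, and $-\dive(av)=-\dive w=f$ identifies $u$ as the unique solution of the limit Dirichlet problem; uniqueness then upgrades the subsequential conclusion to the full sequence.

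For $(i)\Rightarrow(ii)$, I plan to use the subsequence principle. The set $\cM(\alpha,\beta)$ is norm-bounded and uniformly coercive, so the four defining maps of $\tau_\Omega$ take values in uniformly norm-bounded subsets of $\cB$-spaces over separable Hilbert spaces, where the weak operator topology is metrizable and sequentially compact; hence $(\cM(\alpha,\beta),\tau_\Omega)$ is itself metrizable and sequentially compact. Given $a_n\to a$ locally, any subsequence admits a further $\tau_\Omega$-convergent subsequence $a_{n_k}\to\tilde a\in\cM(\alpha,\beta)$, and by the direction just proved this subsequence also converges to $\tilde a$ in the local $H$-sense. Uniqueness of local $H$-limits on $M(\alpha,\beta;\Omega)$, combined with the classical dual $\curl$-problem convergence entailed by local $H$, then identifies $\tilde a$ with $a$ on $\ran(\gradc)\oplus\ran(\curl)$, while the $\cH_D(\Omega)$-component---being finite-dimensional---is handled by a separate finite-rank comparison. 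Metrizability finally promotes the subsequence convergence to full convergence $a_n\to a$ in $\tau_\Omega$.

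The principal obstacle is the identification $\tilde a=a$ in $(i)\Rightarrow(ii)$: local $H$-convergence probes $\tilde a$ directly only on $\ran(\gradc)\oplus\ran(\curl)$, so separate information is needed to pin down the harmonic Dirichlet block. Finite-dimensionality of $\cH_D(\Omega)$---granted by the bounded-Lipschitz hypothesis via Maxwell's compactness property---is the crucial ingredient here, and it also drives the strong convergence of $p_n$ in the direct argument for $(ii)\Rightarrow(i)$; in the topologically trivial case ($\cH_D(\Omega)=\{0\}$) the problem reduces to the setting already treated in \cite{W18_NHC}.
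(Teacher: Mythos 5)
Your direction $(ii)\Rightarrow(i)$ is essentially sound: the observations that $\cH_1=\ran(\curl)\subseteq\ker(\dive)$ and $\cH_D(\Omega)\subseteq\ker(\dive)$, combined with strong convergence of the finite-dimensional harmonic part $p_n$, do yield weak convergence of $v_n$ and $a_n v_n$ once the two relevant Schur maps converge in the weak operator topology. This parallels what Corollary~\ref{cor:nonlquadr}(a) achieves by block manipulation. (Your choice $\cH_0=\ran(\gradc)\oplus\cH_D(\Omega)$, $\cH_1=\ran(\curl)$ is also legitimate, as it agrees with $\tau_{\textnormal{bH}}$ by Theorem~\ref{thm:finite}.)

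The direction $(i)\Rightarrow(ii)$, however, has two genuine gaps, and they are not incidental: you have reversed the paper's proof architecture and thereby relocated the hard work into exactly the step you leave unexplained. The paper proves $(i)\Rightarrow(ii)$ \emph{directly} in Theorem~\ref{thm:locstrongnonloc} — this is where the real effort lives — and then obtains $(ii)\Rightarrow(i)$ for free from the fundamental theorem (compact metric $\tau_{\textnormal{H}}$, Hausdorff $\tau_{\textnormal{bH}}$, continuous identity $\Rightarrow$ homeomorphism). You do the easy direction by hand and try to extract the hard one by a subsequence argument, but this needs precisely what the paper supplies and you do not.

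Concretely: first, your claim that $(\cM(\alpha,\beta),\tau_\Omega)$ is sequentially compact is not established by the observation that the four Schur maps take values in norm-bounded (hence WOT sequentially compact) sets; that only says the image of any sequence has a convergent subsequence in the product, not that the limit tuple is again realised by some $\tilde a\in\cM(\alpha,\beta)$ satisfying $\Re\tilde a\geq\alpha$, $\Re\tilde a^{-1}\geq1/\beta$. Second, and more seriously, the identification of the limit $\tilde a$ with $a$ on the $\cH_D(\Omega)$-block is exactly the crux, and "a separate finite-rank comparison" is not an argument. Local $H$-convergence probes only $\ran(\gradc)$ (via the Dirichlet problem) and $\ran(\curl)$ (via the dual $\curl$-problem), and finite-dimensionality of $\cH_D(\Omega)$ by itself does not tell you anything about how $\tilde a$ acts there. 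The paper's proof of Theorem~\ref{thm:locstrongnonloc} resolves this by enlarging the test operator to $G=\begin{pmatrix}\gradc_\Omega&\iota_D\end{pmatrix}$ — so the harmonic Dirichlet component is coupled to the gradient part in the variational problem — and then identifying the flux $\varepsilon_n GU_n$ via the div--curl lemma (Theorem~\ref{thm:DCLcht}) applied \emph{locally} on balls $B\Subset\Omega$, exploiting that $\cH_D(B)=\{0\}$ when $B^{\textnormal{c}}$ is connected, and finally uses weak closedness of $G^*$. Without this (or an equivalent mechanism to pin down the harmonic block), your subsequence argument cannot conclude $\tilde a=a$, and the proof does not close.
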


In the third application, we derive a new compactness result generalising the Picard--Weber--Weck compactness theorem in the variable coefficient case to `moving coefficients'. This is particularly relevant for nonlinear equations with dielectricity depending on the electric field. \textcolor{black}{Some equations with nonlinearity of a similar type have been considered mathematically in \cite{DTW24,Picard1990}. For references in the physics literature, we exemplarily refer to \cite{Sh89} and to \cite[Chapter 2]{G80}. For $\Omega$ topologically trivial, in order to prove for a given $f$ and $g$, existence of solutions of systems of the type
\begin{equation}
   \dive \varepsilon(E) E = f,\quad \curl E = g,
\end{equation} with $E$ subject to the electric boundary condition, one might want to apply Schauder's fixed point theorem. In this case, one needs to confirm that the mapping assigning to $G\in L_2(\Omega)^3$ the solution $\tilde{E}$ of
\[
\dive \varepsilon(G) {\tilde{E}} = f,\quad \curl \tilde{E} = g;
\] is compact. With only little regularity properties for $\varepsilon$, this compactness might be very difficult to show; this is where the following compactness theorem might be of some use.}
\begin{thmABC}[compactness criterion for moving coefficients]\label{thm:comprough} Let $(\varepsilon_n)_n$, $\varepsilon$ in $\cM(\alpha,\beta)$. 

Assume $\varepsilon_n\to \varepsilon$ in the weak operator topology and in $\tau_\Omega$; and for all $E\in L_2(\Omega)^3$, assume $\{\dive \varepsilon_n E; n\in \N\}$ to be relatively compact in $H^{-1}(\Omega)$. 

Let $(E_n)_n$ be a  bounded sequence in $\dom(\curl^*)$ and assume $\{\dive \varepsilon_n E_n; n\in \N\}$ to be bounded in $L_2(\Omega)$. Then $(E_n)_n$ contains an $L_2(\Omega)^3$-convergent subsequence.
\end{thmABC}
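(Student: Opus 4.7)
The plan is to pass to a subsequence along which $E_n \rightharpoonup E_0$ weakly in $L_2(\Omega)^3$, decompose $E_n$ by a Hodge-type splitting adapted to the electric boundary condition, and show each summand converges strongly. Since $\Omega$ is a bounded Lipschitz domain, $\ran(\gradc)$ is closed, $\cH_D(\Omega)$ is finite-dimensional, and the orthogonal Helmholtz decomposition reads $L_2(\Omega)^3 = \ran(\gradc) \oplus \cH_D(\Omega) \oplus \overline{\ran(\curl)}$. Writing $E_n = \grad \phi_n + h_n + w_n$ accordingly with $\phi_n \in H_0^1(\Omega)$, $h_n \in \cH_D(\Omega)$, $w_n \in \overline{\ran(\curl)}$, the summand $h_n$ lives in a finite-dimensional space and therefore has a convergent subsequence; while $w_n$ satisfies $\dive w_n = 0$ together with $\curl^* w_n = \curl^* E_n$ bounded in $L_2$ (note that the orthogonal projection onto $\ker(\curl^*)$ preserves $\dom(\curl^*)$, since $\ker(\curl^*) \subseteq \dom(\curl^*)$). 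The Picard--Weber--Weck compactness theorem then yields a strongly convergent subsequence of $w_n$.

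The remaining task is to show that $\grad \phi_n$ converges strongly in $L_2(\Omega)^3$. Fix subsequential limits $h_n \to h$ in $\cH_D(\Omega)$ and $w_n \to p$ in $L_2(\Omega)^3$, and rewrite
\[
\dive \varepsilon_n \grad \phi_n = \dive \varepsilon_n E_n - \dive \varepsilon_n h_n - \dive \varepsilon_n w_n.
\]
Each term on the right converges strongly in $H^{-1}(\Omega)$: the first because $L_2(\Omega) \hookrightarrow H^{-1}(\Omega)$ compactly; and for the others, split $\dive \varepsilon_n h_n = \dive \varepsilon_n h + \dive \varepsilon_n (h_n - h)$ (similarly for $w_n$); the remainder vanishes by uniform boundedness of $\varepsilon_n$, while the fixed-argument term converges strongly by the collective-compactness hypothesis combined with the weak $H^{-1}$-convergence $\dive \varepsilon_n h \rightharpoonup \dive \varepsilon h$ supplied by the WOT convergence of $\varepsilon_n$. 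Call the resulting limit $-f$. By the Dirichlet-problem consequence of $\tau_\Omega$-convergence (extended to strongly convergent right-hand sides via a routine Lax--Milgram perturbation) one obtains $\phi_n \rightharpoonup \phi$ weakly in $H_0^1(\Omega)$ with $-\dive \varepsilon \grad \phi = f$, together with $\varepsilon_n \grad \phi_n \rightharpoonup \varepsilon \grad \phi$ weakly in $L_2(\Omega)^3$. Integration by parts then gives the energy convergence
\[
\langle \varepsilon_n \grad \phi_n, \grad \phi_n\rangle_{L_2} = -\langle \dive \varepsilon_n \grad \phi_n, \phi_n\rangle_{H^{-1}, H_0^1} \to -\langle \dive \varepsilon \grad \phi, \phi\rangle = \langle \varepsilon \grad \phi, \grad \phi\rangle_{L_2},
\]
and expanding $\alpha \|\grad \phi_n - \grad \phi\|^2 \leq \Re \langle \varepsilon_n(\grad \phi_n - \grad \phi), \grad \phi_n - \grad \phi\rangle$ into four terms and passing to the limit term by term--the cross-term $\langle \varepsilon_n \grad \phi, \grad \phi_n\rangle$ being reduced by the same integration by parts $-\langle \dive(\varepsilon_n \grad \phi), \phi_n\rangle_{H^{-1}, H_0^1}$ to a strong--weak duality pairing, with the collective compactness again furnishing the strong $H^{-1}$-convergence of $\dive(\varepsilon_n \grad \phi)$--forces $\grad \phi_n \to \grad \phi$ strongly in $L_2$, completing the proof.

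The main obstacle is the strong $H^{-1}$-convergence of the right-hand side of the $\phi_n$-equation: the assumption $\varepsilon_n \to \varepsilon$ in $\tau_\Omega$ and in WOT is only a weak-type statement about $\varepsilon_n$ acting on generic $L_2$-vectors, so by itself it does not allow one to pass to the limit in a Dirichlet problem whose data depend on $n$. This is precisely the role of the collective compactness hypothesis $\{\dive \varepsilon_n E\}_n$ relatively compact in $H^{-1}(\Omega)$ for every $E \in L_2(\Omega)^3$: together with WOT convergence it promotes the weak limits $\dive \varepsilon_n h \rightharpoonup \dive \varepsilon h$ and $\dive \varepsilon_n p \rightharpoonup \dive \varepsilon p$ to strong convergence, which is the pivot that makes both the Dirichlet limit of $\phi_n$ and the cross-term analysis in the coercivity expansion work.
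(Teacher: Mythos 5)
Your proof is correct, but it follows a genuinely different route from the paper's. The paper's argument is short: after extracting a weakly convergent subsequence $E_n \rightharpoonup E$, it considers the shifted fields $\hat E_n := E_n - E$, observes that they solve the electrostatics system $\dive\varepsilon_n \hat E_n = f_n$, $\curlc \hat E_n = g_n$, $\pi_D\pi_{\varepsilon_n}\hat E_n = x_n$ with data converging strongly (by the same compactness arguments you invoke, plus finite-dimensionality of $\cH_D(\Omega)$), and then applies the convergence-of-energy theorem (Theorem \ref{thm:conenergy}) to $\hat E_n$; since $\hat E_n\rightharpoonup 0$, the energy $\langle\varepsilon_n\hat E_n,\hat E_n\rangle \to 0$ and coercivity finishes. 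That argument leans on Theorem \ref{thm:homelectrostatics} and the div-curl lemma (Theorem \ref{thm:dcl}) through Theorem \ref{thm:conenergy}. You instead split $E_n$ via the $\varepsilon$-free orthogonal Helmholtz decomposition, dispose of the harmonic part by finite-dimensionality and of the solenoidal part by Picard--Weber--Weck, and then treat the gradient part by hand: you show $\dive\varepsilon_n\grad\phi_n$ converges strongly in $H^{-1}(\Omega)$, invoke the Dirichlet-problem consequence of $\tau_\Omega$-convergence (Corollary \ref{cor:nonlquadr}(a)) to get the weak limits, and run the coercivity expansion with integration by parts and strong--weak $H^{-1}$/$H_0^1$ duality in place of the div-curl lemma. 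This has two attractive by-products: it does not invoke Theorem \ref{thm:dcl} at all, and it reveals that only the $\dive$-$\grad$ half of the $\tau_\Omega$-convergence hypothesis is actually used (the $\curl$-problem half is replaced by Picard--Weber--Weck). The price is that the gradient part becomes a longer hands-on computation, whereas the paper's proof is a three-line reduction once Theorem \ref{thm:conenergy} is in place. Both arguments are sound; yours is a more modular reconstruction, the paper's a more efficient reuse of the infrastructure it has already built. Two small remarks worth tidying: after projecting, $w_n \in \ran(\curl)$ needs the closedness of $\ran(\curl)$ (which holds here) to land in $\ker(\dive)$ cleanly, and the strong convergence of each term on the right-hand side of the $\phi_n$-equation should be stated along a further subsequence, with the limit then identified uniquely via the known weak limits.
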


The methods developed here also work for a different set of boundary conditions. Thus, analogous results also hold for the magneto-static equations; \textcolor{black}{the corresponding boundary conditions are the so-called `Transverse Magnetic Boundary Conditions' or `magnetic boundary condition' in short, see, e.g., \cite[eq.~(1.59)]{LS19}.} In particular, a compactness statement with moving magnetic permittivities also holds even though, we will not explicitly spell out the result here. The whole theory developed here -- similar to the notion of nonlocal $H$-convergence in \cite{W18_NHC} -- is based on the concept of (closed/compact) Hilbert complexes. In order to keep the present manuscript as accessible as possible we refrain from providing the general statements in the framework of Hilbert complexes and rather refer to future research. Consequently, we shall also address the particular cases for other established closed/compact Hilbert complexes in a forthcoming publication. 

Before we provide a brief overview of the arrangement of the contents of the paper we provide a short literature review to relevant results regarding the article at hand. 

The idea of addressing possibly non-periodic homogenisation problems by introducing an abstract notion of convergence goes back to Spagnolo (for symmetric coefficients) in \cite{Spagnolo1967,Spagnolo1976} and, independently, to Murat and Tartar (see, e.g.,~\cite{Murat1997,Tartar2009}) (for general matrix-valued multiplication operators). The generalisation of these concepts to homogenisation for nonlocal problems was introduced in \cite{W18_NHC}. It has been applied to variational problems in the situation of curl-div problems in \cite{Nicaise2020}. In this source an abstract notion for nonlocal $H$-convergence has been introduced, which is similar to the one to be introduced below. However, the concepts have been applied to topologically trivial domains only (or even more restrictive situations). We note that for topologically trivial domains, the notion of nonlocal $H$-convergence has been used to deduce (nonlocal) homogenisation theorems for time-dependent Maxwell's equations or the wave equation, see \cite{W18_NHC,W19_WOT}. In \cite{PSW24}, the concept of nonlocal $H$-convergence has been used to provide a continuous dependence result for evolutionary equations in the sense of Picard, see, e.g., \cite{Seifert2022}. In \cite[Section 7]{PSW24}, applications have been found in mathematical biology concerning a model related to cell migration and to the PDE system of piezo-elasticity.

For proving the above mentioned compactness statement, we will have the occasion to use a particular version of the div-curl lemma. 
This div-curl lemma used here is based on the abstract findings in \cite{W17_DCL}. These functional analytic variants of the div-curl lemma can also be found in \cite{Pauly2017} with plenty of concrete examples; see also  \cite{Schweizer2018}. We emphasise that in \cite{Pauly2017} one can also find examples with variable coefficients. The idea of the abstract results can be dated back to the notion of compensated compactness (see e.g.~\cite{Tartar2009} and the references therein). 

Both for the Helmholtz decomposition as well as for the solution theory for the electrostatic equations considered here, we refer (for the special case of variable, self-adjoint and coercive coefficients) to the so-called `FA-toolbox' gathered by Pauly and presented for instance in \cite{Pauly2019}; see also \cite{Picard1990} for a perspective to a nonlinear setting. We exemplarily refer to some Banach space versions of the Helmholtz decomposition in \cite{Giga2021TheHD,Maekawa2014,Simader2014}.

The basic compact embedding result for Maxwell's equations is the Picard--Weber--Weck selection theorem, see \cite{Picard1984} and Theorem \ref{thm:PWW} below. The core observation is that compact embedding results of the type
\begin{equation}\label{eq:comp}  \dom(\dive)\cap \dom(\curl^*)\hookrightarrow \hookrightarrow L_2(\Omega)^3
\end{equation}
can be shown \emph{without} the validity of Gaffney's inequality thus allowing to prove this compactness result for more general $\Omega$ such as domains with boundaries that are Lipschitz manifolds (i.e., \textbf{weak Lipschitz domains}). We refer to \cite{Bauer2016} for generalisations to mixed boundary conditions. The proof in \cite{Picard1984} is based on Lipschitz transformations reducing the compactness statement to whether or not the embedding 
\[
  \dom(\dive\varepsilon)\cap \dom(\curl^*) \hookrightarrow L_2(\Omega)^3
\]is compact for self-adjoint, coercive (matrix-valued) multiplication operators $\varepsilon$ and $\Omega$ being the unit cube. This in turn can be shown using weighted scalar products in Hilbert spaces and the classical compact embedding result (that makes use of Gaffney's inequality). We particularly refer to the derivations in \cite[Lemma 5.1 and Section 6.1]{Pauly2021} for these techniques. In particular, one obtains 
\[
  \dom(\dive\varepsilon)\cap \dom(\curl^*) \hookrightarrow\hookrightarrow L_2(\Omega)^3
\]for self-adjoint, coercive $\varepsilon$ and bounded weak Lipschitz domains $\Omega$. It appears that already for constant sequences $(\varepsilon_n)_n=(\varepsilon)_n$ for some nonselfadjoint, coercive $\varepsilon$, the above Theorem \ref{thm:comprough} is new.

Next, we sketch the contents of the sections of the present article. In Section \ref{sec:abstractdiv}, we recall the core finding of \cite{TW14_FE} which forms the foundation of the rationale to follow. The Helmholtz decomposition in the most general form of this article is provided in Section \ref{sec:HD}. This decomposition is used to obtain a solution theory for electrostatics along similar lines as in \cite[Section 4]{Picard1982}, which is presented in Section \ref{sec:estatic}. The concept of nonlocal $H$-convergence is recalled in Section \ref{sec:nHc}. In this section, we will also argue that finite-dimensional harmonic Dirichlet/Neumann fields will not have an effect on the nonlocal $H$-topology. In Section \ref{sec:infdim}, we provide an example for a domain with infinite-dimensional harmonic Dirichlet fields eventually leading to the possibility of an infinite set of mutually incomparable nonlocal $H$-topologies. The nonlocal homogenisation result for electrostatic problems is provided in Section \ref{sec:homel}. Section \ref{sec:energy} contains the convergence of the corresponding stored energy. This result is applied in Section \ref{sec:compact} to derive a proof of Theorem \ref{thm:comprough}, where we also provide an elementary example of nonlocal coefficients. In Section \ref{sec:locnonlocnontriv}, we will show that the well-known notion of local $H$-convergence is equivalent to nonlocal $H$-convergence on multiplication operators for any underlying $\Omega$ satisfying the \textbf{electric compactness property}\textcolor{black}{\footnote{\textcolor{black}{Here we follow the custom of attaching a descriptive word in front of `compactness property' to clarify the contents. In \cite{Bauer2016}, the term Maxwell compactness property was used to describe compact embedding results of the type \eqref{eq:comp} for various sets of boundary conditions. Since we focus here on the `electric boundary condition' (\cite{LS19}), we coined the term `electric compactness condition'.}}}; that is, \eqref{eq:comp}.

We provide a conclusion of our findings in Section \ref{sec:concl}.

For linear operators the term `invertible' will always be reserved for surjective and injective transformations. Since we only consider closed operators defined on Hilbert spaces here; invertible will thus always mean `continuously invertible'. $\K\in \{\R,\C\}$, scalar products are anti-linear in the first and linear in the second component. Direct sums of vector subspaces are denoted by $\dot+$; if these sums happen to be orthogonal, we also use $\oplus$.

\section{Abstract divergence-form operators}\label{sec:abstractdiv}

The Helmholtz decomposition as much as the solution theory for the electrostatic equations considered below is based on \cite[Theorem 3.1]{TW14_FE}. We shortly set the stage for this detour. For a densely defined closed linear operator $C\colon \dom(C)\subseteq \mathcal{H}_0\to \mathcal{H}_1$ acting from the Hilbert spaces $\mathcal{H}_0$ into $\mathcal{H}_1$, we define $H^1(C)\coloneqq (\dom(C),\langle\cdot,\cdot\rangle_C)$ with $\langle\cdot,\cdot\rangle_C$ being the graph scalar product of $C$. We define $C^\diamond \colon \mathcal{H}_1 \to H^{-1}(C)\coloneqq (H^1(C))^*$ via
\begin{equation}\label{eq:Cdi}
    C^\diamond \phi \colon H^1(C) \to \K, u\mapsto \langle \phi, Cu\rangle_{\mathcal{H}_1}.
\end{equation} It is then not difficult to see that $C^\diamond$ extends $C^*$, where we identify $\mathcal{H}_1$ with its dual via the unitary Riesz mapping, see also \cite[Proposition 9.2.2(b)]{Seifert2022}. 

For a Hilbert space $\mathcal{H}$ and a closed subspace $\mathcal{V}\subseteq \mathcal{H}$, we define
\[
   \iota_{\mathcal{V}} \colon \mathcal{V}\hookrightarrow \mathcal{H}, x\mapsto x
\]the canonical embedding. Note that then $\iota_{\mathcal{V}}^* \colon \mathcal{H} \rightarrow \mathcal{V}$ acts as the orthogonal projection onto $\mathcal{V}$ and $\pi_\mathcal{V}\coloneqq \iota_\mathcal{V}\iota_\mathcal{V}^*$ is the actual orthogonal projection; see \cite[Lemma 11.3.3]{Seifert2022}. 

\textcolor{black}{
We recall the closed range theorem, a standard result in functional analysis. 
\begin{proposition}[{{\cite[Theorem IV.1.2]{Goldberg2006}}}]\label{prop:crt} Let $\cH_0,\cH_1$ be Hilbert spaces, $A\colon \dom(A)\subseteq \cH_0\to \cH_1$ closed and densely defined. Then the following conditions are equivalent:
\begin{enumerate}
\item[(i)] $\ran(A)\subseteq \cH_1$ is closed;
\item[(ii)] $\ran(A^*)\subseteq \cH_0$ is closed.
\end{enumerate}
\end{proposition}}

If $C\colon \dom(C)\subseteq \mathcal{H}_0\to \mathcal{H}_1$ is closed and densely defined with closed range $\mathcal{V}_1\coloneqq \ran(C)\subseteq \mathcal{H}_1$, then, by Proposition \ref{prop:crt}, $\mathcal{V}_0 \coloneqq \ran(C^*)\subseteq \mathcal{H}_0$ is closed as well. In this case, we define the \textbf{reduced operator}, 
\[C_{\rdd}\coloneqq \iota_{\mathcal{V}_1}^*C\iota_{\mathcal{V}_0}.\] 

\begin{theorem}[{{\cite[Theorem 3.1]{TW14_FE}}}]\label{thm:TW-Mana} Let $\mathcal{H}_0$, $\mathcal{H}_1$ be Hilbert spaces, $a\in \cB(\mathcal{H}_1)$, $C\colon \dom(C)\subseteq \mathcal{H}_0\to \mathcal{H}_1$ densely defined, closed with closed range $\mathcal{V}_1\coloneqq \ran(C)\subseteq \mathcal{H}_1$. Then the following conditions are equivalent:
\begin{enumerate}
\item[(i)] for all $f\in H^{-1}(C_{\rdd})$ there exists a unique $u\in H^1(C_{\rdd})$ such that
\[
    \langle a C u, C\phi\rangle = f(\phi)\quad(\phi\in H^1(C_{\rdd}));
\]
\item[(ii)] for all $f\in H^{-1}(C_{\rdd})$ there exists a unique $u\in H^1(C_{\rdd})$ such that
\[
     C_{\rdd}^\diamond \iota_{\mathcal{V}_1}^* a\iota_{\mathcal{V}_1} C_{\rdd} u = f;
\]
\item[(iii)] the operator $\iota_{\mathcal{V}_1}^*a\iota_{\mathcal{V}_1} \in \cB(\mathcal{V}_1)$ is continuously invertible.
\end{enumerate}
In either case, if $f\in H^{-1}(C_{\rdd})$ then \[u=(C_{\rdd})^{-1} ( \iota_{\cV_1}^* a\iota_{\cV_1})^{-1} (C_{\rdd}^\diamond)^{-1} f.\]
 If $f\in \ran(C_{\rdd}^*)$ then
\[u=(C_{\rdd})^{-1} ( \iota_{\cV_1}^* a\iota_{\cV_1})^{-1} (C_{\rdd}^*)^{-1} f\]
and the operator
\[
    D_{\rdd, a} \colon \dom(D_{\rdd,a})\subseteq \cV_0 \to \cV_1, u\mapsto C_{\rdd}^\diamond a C_{\rdd} u
\]with domain 
\[
   \dom(D_{\rdd,a}) = \{ u\in \dom(C_{\rdd}) ; a C_{\rdd} u \in \dom(C_{\rdd}^*)\}
\]
is densely defined, closed and continuously invertible. If $\dom(C)\cap \ker(C)^\bot \hookrightarrow \cH_0$ compactly, then $\dom(D_{\rdd,a})$ has compact resolvent. 
 \end{theorem}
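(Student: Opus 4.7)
The plan is to reduce everything to two elementary isomorphism statements and then run a compact algebraic manipulation. By the closed range theorem, $\ker(C_{\rdd}) = \dom(C)\cap \ker(C)^\bot \cap \ker(C) = \{0\}$ and $\ran(C_{\rdd}) = \iota_{\mathcal{V}_1}^*\ran(C) = \mathcal{V}_1$, so $C_{\rdd}\colon H^1(C_{\rdd}) \to \mathcal{V}_1$ is a topological isomorphism when $\dom(C_{\rdd})$ carries the graph norm. Dualising, $C_{\rdd}^\diamond\colon \mathcal{V}_1 \to H^{-1}(C_{\rdd})$ is the composition of the Riesz identification on $\mathcal{V}_1$ with the transpose of $C_{\rdd}$, and is therefore also a topological isomorphism. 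This means the "outer" factors in (ii) are automatically invertible and everything hinges on $\iota_{\mathcal{V}_1}^* a \iota_{\mathcal{V}_1}$.

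The equivalence (i)$\iff$(ii) is then tautological via the chain
\[
   \langle aCu, C\phi\rangle_{\mathcal{H}_1} = \langle \iota_{\mathcal{V}_1}^* a \iota_{\mathcal{V}_1} C_{\rdd} u,\, C_{\rdd}\phi\rangle_{\mathcal{V}_1} = \bigl(C_{\rdd}^\diamond \iota_{\mathcal{V}_1}^* a \iota_{\mathcal{V}_1} C_{\rdd} u\bigr)(\phi),
\]
valid for $u,\phi \in H^1(C_{\rdd})\subseteq \mathcal{V}_0$, since $Cu = \iota_{\mathcal{V}_1} C_{\rdd} u$ and projecting the first slot of the inner product onto $\mathcal{V}_1 \ni C\phi$ does not change anything. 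For (ii)$\iff$(iii), one factors out the two outer isomorphisms from the first paragraph: (ii) says $C_{\rdd}^\diamond \iota_{\mathcal{V}_1}^* a \iota_{\mathcal{V}_1} C_{\rdd}\colon H^1(C_{\rdd}) \to H^{-1}(C_{\rdd})$ is bijective, which is equivalent to $\iota_{\mathcal{V}_1}^* a \iota_{\mathcal{V}_1}\colon \mathcal{V}_1 \to \mathcal{V}_1$ being bijective, and the latter is then continuously invertible by the open mapping theorem.

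The solution formula $u = (C_{\rdd})^{-1}(\iota_{\mathcal{V}_1}^* a \iota_{\mathcal{V}_1})^{-1}(C_{\rdd}^\diamond)^{-1} f$ is read off the composition directly; the variant with $(C_{\rdd}^*)^{-1}$ on $\ran(C_{\rdd}^*)$ is the remark that, under the Riesz identification, $C_{\rdd}^\diamond$ extends $C_{\rdd}^*$. For the divergence-form operator, one checks that on the declared domain $D_{\rdd, a} u = C_{\rdd}^* \iota_{\mathcal{V}_1}^* a \iota_{\mathcal{V}_1} C_{\rdd} u$ and that the candidate inverse
\[
  D_{\rdd, a}^{-1} = (C_{\rdd})^{-1}\bigl(\iota_{\mathcal{V}_1}^* a \iota_{\mathcal{V}_1}\bigr)^{-1}(C_{\rdd}^*)^{-1}
\]
is a well-defined bounded operator $\mathcal{V}_0\to \mathcal{V}_0$; then $D_{\rdd, a}$ is the set-theoretic inverse of an everywhere defined bounded injection, which automatically gives closedness, continuous invertibility, and density of the domain (since the range of a bounded injective operator on a Hilbert space equals its domain).

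For the compact-resolvent claim, the hypothesis says $H^1(C_{\rdd}) = \dom(C)\cap \ker(C)^\bot \hookrightarrow \mathcal{H}_0$ compactly, hence $H^1(C_{\rdd}) \hookrightarrow \mathcal{V}_0$ compactly; since $D_{\rdd, a}^{-1}$ factors as the bounded map $\mathcal{V}_0 \to H^1(C_{\rdd})$ followed by this compact embedding into $\mathcal{V}_0$, it is compact. The main friction I expect is not conceptual but purely bookkeeping: tracking which Hilbert space each inner product and each duality pairing lives in, and carefully separating $C_{\rdd}^\diamond$ (bounded from $\mathcal{V}_1$ to $H^{-1}(C_{\rdd})$) from $C_{\rdd}^*$ (closed, defined on a dense subspace of $\mathcal{V}_1$ with target $\mathcal{V}_0$), especially when reconciling the two forms of the explicit solution formula on the overlap $f\in \ran(C_{\rdd}^*)$.
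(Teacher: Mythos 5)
The paper does not prove Theorem~\ref{thm:TW-Mana}; it cites it from \cite{TW14_FE}, so there is no internal argument to compare yours against. Your reduction of the whole theorem to the invertibility of $\iota_{\mathcal{V}_1}^* a\iota_{\mathcal{V}_1}$ by peeling off the two topological isomorphisms $C_{\rdd}\colon H^1(C_{\rdd})\to\mathcal{V}_1$ and $C_{\rdd}^\diamond\colon\mathcal{V}_1\to H^{-1}(C_{\rdd})$ is the right route, and the treatment of (i)$\iff$(ii)$\iff$(iii), the two solution formulas, and the compact-resolvent claim are sound.

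The density of $\dom(D_{\rdd,a})$ is a genuine gap. The parenthetical you offer --- that ``the range of a bounded injective operator on a Hilbert space equals its domain'' --- is false (the unilateral shift is a bounded injection with closed range of codimension one), and it is not automatic that the inverse of an everywhere-defined bounded injection is densely defined. What you need is that $\ran(S)$ is dense in $\mathcal{V}_0$, where $S = C_{\rdd}^{-1}(\iota_{\mathcal{V}_1}^*a\iota_{\mathcal{V}_1})^{-1}(C_{\rdd}^*)^{-1}$; this holds but must be argued, e.g.\ via $\ran(S)^\bot = \ker(S^*) = \{0\}$ since $S^* = C_{\rdd}^{-1}(\iota_{\mathcal{V}_1}^*a^*\iota_{\mathcal{V}_1})^{-1}(C_{\rdd}^*)^{-1}$ is again injective, or by propagating density: $\dom(C_{\rdd}^*)$ is dense in $\mathcal{V}_1$, $(\iota_{\mathcal{V}_1}^*a\iota_{\mathcal{V}_1})^{-1}$ is a bounded bijection of $\mathcal{V}_1$, and $C_{\rdd}^{-1}$ is a homeomorphism onto $H^1(C_{\rdd})$, so it sends any dense subset of $\mathcal{V}_1$ to a set dense in $\dom(C_{\rdd})$ and hence in $\mathcal{V}_0$. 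A secondary but real point: identifying $\dom(D_{\rdd,a})$ with $\ran(S) = \{u\in\dom(C_{\rdd});\ \iota_{\mathcal{V}_1}^*a\iota_{\mathcal{V}_1}C_{\rdd}u\in\dom(C_{\rdd}^*)\}$ silently replaces the stated domain $\{u\in\dom(C_{\rdd});\ aC_{\rdd}u\in\dom(C_{\rdd}^*)\}$, which in addition forces the $\mathcal{V}_1^\bot$-component of $aC_{\rdd}u$ to vanish and is therefore strictly smaller whenever $a$ does not map $\mathcal{V}_1$ into $\mathcal{V}_1$; with the literal domain the operator would be a proper, non-surjective restriction of $S^{-1}$. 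This is almost certainly an imprecision in the transcribed statement (parts (i)--(iii) only see $\iota_{\mathcal{V}_1}^*a\iota_{\mathcal{V}_1}$), but you should state explicitly which domain you are using rather than eliding the replacement.
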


\begin{remark}
We recall that the standard assumption for the coefficients $a\in \cB(\mathcal{H}_1)$ for weak variational problems of the type discussed in (a) is that $\Re a\geq c$ for some $c>0$ (i.e.,  $a$ is \textbf{coercive}) in the sense of positive definiteness. A moment's thought reveals that this condition yields $\Re \iota_{\mathcal{V}_1}^* a\iota_{\mathcal{V}_1}\geq c\id_{\mathcal{V}_1}$. In fact, the same holds for $a$ being a maximal monotone relation with $\dom(a)=\mathcal{H}_1$, see \cite{Robinson1999}. This has been exploited in \cite{TW14_FE} to provide a solution theory for nonlinear divergence form problems. 
\end{remark}

\textcolor{black}{Next, we provide a sufficient condition for the closedness of the range of $C$. The corresponding result is similar to the ones in \cite{DTW24} and \cite{Pauly2019}. For convenience, we present the short argument in the present situation.
\begin{proposition}\label{prop:compclos} Let $\mathcal{H}_0$, $\mathcal{H}_1$ be Hilbert spaces, and let $C\colon \dom(C)\subseteq \mathcal{H}_0\to \mathcal{H}_1$ be densely defined, closed. If $H^1(C)\cap \ker(C)^{\bot_{\cH_0}}$ embeds compactly into $\cH_0$, then $\ran(C)\subseteq \cH_1$ is closed.
\end{proposition}
\begin{proof} First we argue, that for closedness of $\ran(C)\subseteq \cH_1$ it suffices to show that there exists $c\geq 0$, such that for all $\phi\in \dom(C)\cap \ker(C)^\bot$ we have
\begin{equation}\label{eq:poinC}
   \|\phi\|_{\cH_0}\leq c\|C\phi\|_{\cH_1}.
\end{equation}
 Indeed, assume that \eqref{eq:poinC} holds, and consider $(\psi_n)_n$ in $\ran(C)$ converging to some $\psi$ in $\cH_1$. For each $n\in \N$, we find $\tilde{\phi}_n \in \dom(C)$ with $\psi_n =C\tilde{\phi}_n$. Decomposing $\tilde{\phi}_n = \phi_{0,n}+\phi_n$ according to $\cH_0 = \ker(C)\oplus \ker(C)^\bot$, as both $\phi_{0,n}, \tilde{\phi}_n\in \dom(C)$, we get ${\phi}_n\in \dom(C)\cap \ker(C)^\bot$ and $C\tilde{\phi}_n=C{\phi}_n=\psi_n$. By \eqref{eq:poinC}, for all $n,m\in \N$, we get
 \[
    \|\phi_n-\phi_m\|_{\cH_0}\leq c\|C\phi_n-C\phi_m\|_{\cH_1}=c\|\psi_n-\psi_m\|_{\cH_1}\to 0\quad (m,n\to\infty).
 \]
 Hence, $(\phi_n)_n$ is a Cauchy-sequence in $H^1(C)$; denote by $\phi \in H^1(C)$ its limit. Thus, from  $C\phi_n =\psi_n$, by continuity of $C\colon H^1(C)\to \cH_1$, it follows that $C\phi = \lim_{n\to\infty} C\phi_n =\lim_{n\to \infty} \psi_n = \psi$. Hence, $\psi = C\phi\in \ran(C)$ and $\ran(C)\subseteq \cH_1$ is closed. \newline
 In order to see that \eqref{eq:poinC} holds, we assume by contradiction that for all $n\in \N$, we find $\phi_n \in \dom(C)\cap \ker(C)^\bot$ such that
 \[
  \|\phi_n\|_{\cH_0}> n\|C\phi_n\|_{\cH_1}.
 \]
 It follows that $\phi_n\neq 0$ and, by possibly dividing the inequality by $\|\phi_n\|_{\cH_0}$, we may assume without loss of generality that $\|\phi_n\|_{\cH_0}=1$ already. Thus, the assumed estimate yields that $C\phi_n\to 0$ as $n\to\infty$ and, in particular, that $(\phi_n)_n$ is a bounded sequence in $H^1(C)\cap \ker(C)^\bot$. By possibly choosing a subsequence, we may assume without restriction that $(\phi_n)_n$ converges weakly in $H^1(C)\cap \ker(C)^\bot$ to some $\phi \in H^1(C)\cap \ker(C)^\bot$. From $C\phi_n\to 0$ and weak continuity of $C\colon H^1(C)\cap \ker(C)^\bot \to \cH_1$, it follows that $\ker(C)^\bot \ni \phi\in \ker(C)$. This yields $\phi=0$. However, $H^1(C)\cap \ker(C)^\bot$ embeds compactly into $\cH_0$, which implies that $(\phi_n)_n$ is strongly convergent in $\cH_0$ and, thus, $1=\|\phi_n\|\to \|\phi\|$ as $n\to\infty$ producing a contradiction to $\phi  =0$. Hence, \eqref{eq:poinC} holds and $\ran(C)\subseteq \cH_1$ is closed.
\end{proof}
We recall the main examples used for the operator $C$ in the applications to follow.
\begin{example}\label{exa:Cex} Let $\Omega\subseteq \R^3$ open. Then 
\begin{align*}
\grad & \colon H^1(\Omega)\subseteq L_2(\Omega)\to L_2(\Omega)^3, \phi\mapsto \nabla \phi \\
\dive & \colon H(\dive,\Omega)\subseteq L_2(\Omega)^3\to L_2(\Omega), \phi\mapsto \nabla\cdot \phi \\
\curl & \colon H(\curl,\Omega)\subseteq L_2(\Omega)^3\to L_2(\Omega)^3, \phi\mapsto \nabla\times \phi
\end{align*}
where all derivatives are computed in the distributional sense, 
\begin{align*}
  H^1(\Omega) & \coloneqq \{ u\in L_2(\Omega); \nabla u \in L_2(\Omega)^3\}, \\
    H(\dive,\Omega) & \coloneqq \{ \phi\in L_2(\Omega)^3; \nabla\cdot u \in L_2(\Omega)^3\}, \\
    H(\curl,\Omega) & \coloneqq\{ \phi\in L_2(\Omega)^3; \nabla\times u \in L_2(\Omega)^3\}.
\end{align*}
Then $\grad, \dive,$ and $\curl$ are closed and densely defined. The first item being easily shown by the closedness of weak derivatives and the second item is straightforward realising that $C_c^\infty(\Omega)$ is dense in $L_2(\Omega)$. Thus, endowing $H^1(\Omega), H(\dive,\Omega), $ and $H(\curl,\Omega) $ with the respective graphs norms of  $\grad, \dive,$ and $\curl$, we can consider these spaces as Hilbert spaces on their own right. Next, we introduce
\begin{align*}
\gradc & \colon H^1_0(\Omega)\subseteq L_2(\Omega)\to L_2(\Omega)^3, \phi\mapsto \nabla \phi \\
\mathring{\dive} & \colon H(\mathring{\dive},\Omega)\subseteq L_2(\Omega)^3\to L_2(\Omega), \phi\mapsto \nabla\cdot \phi \\
\curlc & \colon H(\curlc,\Omega)\subseteq L_2(\Omega)^3\to L_2(\Omega)^3, \phi\mapsto \nabla\times \phi,
\end{align*}
where 
\[
H^1_0(\Omega)\coloneqq \overline{C_c^\infty(\Omega)}^{H^1(\Omega)}, H(\mathring{\dive},\Omega) =\overline{C_c^\infty(\Omega)^3}^{H(\dive,\Omega)}\text{ and }H(\curlc,\Omega)\coloneqq  \overline{C_c^\infty(\Omega)^3}^{H(\curl,\Omega)}.\]
Again, $\gradc$ and $\curlc$ are densely defined and closed. By the definition of the distributional derivative it follows that (see also \cite[Theorem 6.1.2]{Seifert2022})
\begin{equation}\label{eq:adj}
  \dive^* = - \gradc,\quad -\grad^*=-\mathring{\dive} ,\quad   \curl^*=\curlc.
\end{equation}
If the set $\Omega$ is clear from the context, we will also write $H(\dive)$ instead of $H(\dive,\Omega)$, etc. Using the above notation for $C$, we shall in these cases also employ the notation $H^1(\curl)$ instead of $H(\curl, \Omega)$, etc. 
\end{example}
Next, we consider the relevant applications of Proposition \ref{prop:compclos} in the course of the manuscript. More precisely, we find sufficient criteria for $\Omega\subseteq\R^3$ in order that the operators considered in Example \ref{exa:Cex} have closed range. For this,  referring to \cite[Remark 4.7]{PaulyEdd2021}, one can find conditions for the closedness of $\ran(\curl)$. In any case, in order to apply Proposition \ref{prop:compclos}, the following result is helpful.
\begin{theorem}[Picard--Weber--Weck selection theorem, \cite{Picard1984}]\label{thm:PWW} Let $\Omega\subseteq \R^3$ be a weak Lipschitz domain. Then the embeddings
\[
   \dom(\dive)\cap \dom(\mathring{\curl})\hookrightarrow L_2(\Omega)^3\text{ and }   \dom(\mathring{\dive})\cap \dom({\curl})\hookrightarrow L_2(\Omega)^3
\]are compact.
\end{theorem}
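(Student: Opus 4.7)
The plan is to follow the strategy outlined in the introduction and in \cite{Picard1984}: localise via bi-Lipschitz charts flattening the boundary, transfer the problem to the unit cube where it becomes a compact embedding for the pair $(\dive\varepsilon,\curlc)$ with variable coefficients, reduce to the constant-coefficient case via a weighted scalar product, and finally invoke Gaffney's inequality on the cube.

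\textbf{Localisation.} Since $\Omega$ is a bounded weak Lipschitz domain, pick a finite open cover $U_0,U_1,\dots,U_N$ of $\overline{\Omega}$ with $U_0\Subset\Omega$ and, for each $j\ge 1$, a bi-Lipschitz homeomorphism $\phi_j\colon U_j\to Q$ onto the open unit cube mapping $U_j\cap\Omega$ to a half-cube and $U_j\cap\partial\Omega$ to its flat face. Fix a smooth subordinate partition of unity $(\chi_j)_{j=0}^N$. Given a bounded sequence $(E_n)_n$ in $\dom(\dive)\cap\dom(\curlc)$, each $\chi_j E_n$ remains bounded in $\dom(\dive)\cap\dom(\curlc)$ since multiplication by $\chi_j$ commutes with $\dive$ and $\curl$ up to bounded zeroth-order multiplication operators in $L_2$. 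The term $\chi_0 E_n$ is compactly supported in $\Omega$, extends by zero to $\R^3$, and lies in $H^1(\R^3)^3$ by the standard Friedrichs-type identity that recovers the full gradient from the combined $\dive$ and $\curl$ control on functions without boundary; Rellich-Kondrachov supplies an $L_2$-convergent subsequence.

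\textbf{Transfer to the cube and reduction to identity coefficient.} For $j\ge 1$, push $\chi_j E_n$ forward under $\phi_j$ via the Piola transform $F\mapsto(D\phi_j^{-1})^\top (F\circ\phi_j^{-1})$ tailored to $\curl$. Under a bi-Lipschitz change of coordinates the distributional $\curl$ transforms covariantly, and the $\curlc$-boundary condition on $U_j\cap\partial\Omega$ becomes the analogous condition on the flat face. After extension by zero across the remaining faces, one obtains a bounded sequence in $\dom(\dive\varepsilon_j)\cap\dom(\curlc)$ on $Q$, where $\varepsilon_j$ is a bounded, pointwise self-adjoint, uniformly coercive matrix-valued multiplication operator built from $D\phi_j$ and $|\det D\phi_j|$. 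For any such $\varepsilon$, equip $L_2(Q)^3$ with the equivalent scalar product $\langle\varepsilon\,\cdot\,,\,\cdot\,\rangle$; then $\dive\varepsilon$ is the adjoint of $\gradc$ in the new inner product, so $\dom(\dive\varepsilon)\cap\dom(\curlc)$ (in the original space) equals $\dom(\dive)\cap\dom(\curlc)$ (in the weighted space) as a set. Equivalence of the two inner products turns compactness in one into compactness in the other, reducing the claim to $\varepsilon=\id$ on the cube. There Gaffney's inequality yields a uniform $H^1$-bound on $\dom(\dive)\cap\dom(\curlc)$ and Rellich-Kondrachov closes the cube case.

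\textbf{Patching.} A diagonal subsequence convergent in $L_2(Q)^3$ for every $j\ge 1$, pulled back under $\phi_j^{-1}$ and summed together with the $j=0$ contribution, yields an $L_2(\Omega)^3$-convergent subsequence of $(E_n)_n$. The second embedding $\dom(\mathring\dive)\cap\dom(\curl)\hookrightarrow L_2(\Omega)^3$ is handled symmetrically by the dual Piola transform $F\mapsto|\det D\phi_j|^{-1}D\phi_j\,(F\circ\phi_j^{-1})$, which carries $\dive$ covariantly and swaps the roles of Dirichlet and Neumann boundary conditions. The main obstacle is the transfer step: one must verify that a merely bi-Lipschitz (and not $C^1$) change of coordinates preserves the weakly imposed tangential-trace condition defining $\dom(\curlc)$ and produces an admissible measurable coefficient $\varepsilon_j$ with constants depending only on the bi-Lipschitz constants of $\phi_j$. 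This is the technical core of \cite{Picard1984} and is made possible by the boundedness of the Piola transform on both $L_2$ and on the graph spaces of $\curl$ and $\dive$.
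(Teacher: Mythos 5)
The paper does not prove Theorem~\ref{thm:PWW}; it is stated as a citation to \cite{Picard1984}, and the introduction merely sketches the strategy (bi-Lipschitz localisation reducing to a variable-coefficient problem on the cube, then weighted scalar products and the classical Gaffney-based compactness). Your sketch follows that outline, so at the level of strategy you and the paper agree.

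There is, however, a genuine gap in your reduction step. You claim that equipping $L_2(Q)^3$ with $\langle\varepsilon\cdot,\cdot\rangle$ reduces the variable-coefficient compactness claim to the case $\varepsilon=\id$. That is not what happens: in the $\varepsilon$-weighted space the adjoint of $\gradc$ is indeed $\dive\varepsilon$, but the adjoint of $\curl$ is $\varepsilon^{-1}\curlc\varepsilon$, \emph{not} $\curlc$. So the pair $(\dive\varepsilon,\curlc)$ does not correspond to the identity-coefficient pair $(\dive,\curlc)$ in the weighted space, and you cannot invoke Gaffney's inequality after the relabelling. Gaffney's $H^1$-estimate is specific to the constant-coefficient operators and fails for merely bounded measurable $\varepsilon$ (weak solutions of $\dive\varepsilon\gradc u=f$ with $f\in L_2$ need not be $H^2$, so no uniform $H^1$-bound on $\gradc u$ is available). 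The actual argument in the source referenced by the paper (\cite[Lemma 5.1, Section 6.1]{Pauly2021}) uses the weighted scalar product differently: one forms the $\langle\varepsilon\cdot,\cdot\rangle$-orthogonal Helmholtz decomposition $E=\gradc u+\varepsilon^{-1}\curl H$ on the cube (the harmonic part vanishes since the cube is topologically trivial), then proves precompactness of each component separately. For the gradient part, if $u_n$ is bounded in $\dom(\dive\varepsilon\gradc)$ and $u_n\to u$ in $L_2$ (Rellich), then
\[
\langle\varepsilon\gradc(u_n-u),\gradc(u_n-u)\rangle=-\langle\dive\varepsilon\gradc(u_n-u),u_n-u\rangle\to 0,
\]
and coercivity of $\varepsilon$ gives $\gradc u_n\to\gradc u$ in $L_2$; the curl part is handled by the symmetric argument using the compact embedding $\dom(\curlr)\hookrightarrow\hookrightarrow L_2(Q)^3$ for the identity coefficient. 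The identity-coefficient result is thus an \emph{ingredient} (supplying the basic Rellich-type input and closed ranges), not the endpoint of a reduction, and no $H^1$-bound on $E$ is ever established or needed.
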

\begin{example}\label{ex:rangeclosed} Recall the operators introduced in Example \ref{exa:Cex}.\newline
(a) If $\Omega\subseteq \R^3$ is contained in a slab, then $\ran(\gradc)\subseteq L_2(\Omega)^3$ is closed. Indeed, by Poincar\'e's inequality, we find $c\geq 0$ such that for all $u\in \dom(\gradc)$, 
\[
     \| u \|_{L_2(\Omega)} \leq c\|\grad u \|_{L_2(\Omega)^3}.
\]Thus, an estimate of the form \eqref{eq:poinC} holds with $C=\gradc$, which implies that $\ran(\gradc)$ is closed. By \eqref{eq:adj} and Proposition \ref{prop:crt}, we also get $\ran(\dive)\subseteq L_2(\Omega)^3$ is closed; see also \cite[Proposition 11.3.1 and Corollary 11.3.2]{Seifert2022}.\newline
(b) If $\Omega\subseteq \R^3$ is a bounded weak Lipschitz domain, that is, $\partial\Omega$ is a Lipschitz manifold, then $\ran(\curl)$ and $\ran(\curlc)$ are closed subspaces of $L_2(\Omega)^3$. Indeed, by Theorem \ref{thm:PWW}, we have
\[
   \dom(\curlc)\cap \dom(\dive)\hookrightarrow L_2(\Omega)^3
\] 
compactly. Then, as $\curlc^*=\curl$ (see \eqref{eq:adj}), $\ker(\curlc)^\bot = \overline{\ran}(\curl)$. By $\overline{\ran}(\curl)\subseteq \ker(\dive)\subseteq \dom(\dive)$ (see, e.g., \cite[Proposition 6.1.4]{Seifert2022}), we obtain 
\[
   \dom(\curlc)\cap \ker(\curlc)^\bot\hookrightarrow L_2(\Omega)^3
\] compactly, and, hence, by Proposition \ref{prop:compclos} that $\ran(\curlc)\subseteq L_2(\Omega)^3$ is closed. Finally, by Proposition \ref{prop:crt}, $\ran(\curl)\subseteq L_2(\Omega)^3$ is closed.\newline
(c) Let $a,b,c,d\in \R$ with $a<b$ and $c<d$. Consider $\Omega= (a,b)\times (c,d)\times \R$. Then, by \cite[Example 10]{ABMW19}, $\ran(\curlc)$, and, hence, also, $\ran(\curl)$ are closed subspaces of $L_2(\Omega)^3$.
 \end{example}
 Finally, we comment on the difference between $\curlrd$ and $\curlc$.
 \begin{remark}\label{rem:rdc} (a) Assume that $\Omega\subseteq \R^3$ is such that $\ran(\curl)\subseteq L_2(\Omega)^3$ is closed. We recall from the beginning (or \cite[Proposition 9.2.2(b)]{Seifert2022}) that $\curlc=\curl^*$ is extended by $\curld$; and, similarly, $\curl_{\textnormal{red}}^*\subseteq \curlrd$. Let now $\phi\in \dom(\curlc)$. Then $\phi = \phi_0+\phi_1$ with $\phi_0 \in \ker(\curlc)$ and $\phi_1\in \cV\coloneqq \ran(\curlc^*)=\ran(\curl)$; which yields $\phi_1\in \dom(\curlc)\cap \ker(\curlc)^\bot$.\newline We claim that $\curlc \phi =\curlc \phi_1 = \curlrd \phi_1 = \curlrd \iota_{\cV}^*\phi$: Indeed, since $\curlrd\phi_1 \in H^{-1}(\curlr)$, we need to compare the results as functionals identifying $\cV$ with its dual via the $L_2(\Omega)^3$ scalar product. For this, let $\psi \in   H^1(\curlr)=\dom(\curl)\cap \ker(\curl)^\bot$. We compute
 \begin{multline*}
 (\curlc \phi)(\psi) = \langle \curlc \phi,\psi\rangle_{L_2(\Omega)^3} = \langle  \phi,\curl \psi\rangle_{L_2(\Omega)^3} \\ = \langle  \phi_1,\curl \psi\rangle_{L_2(\Omega)^3} = \langle  \phi_1,\curlr \psi\rangle_{\cV} = (\curlrd \phi_1)(\psi),
 \end{multline*}
  which shows the claim. \newline
    (b) By a slight abuse of notation (deleting $ \iota_{\cV}^*$), we shall write
    \[
         \curlrd \phi \text{ instead of } \curlrd\iota_{\cV}^* \phi.
    \] Thus, part (a) justifies writing
 \[
    \curlc \phi = \curlrd \phi \in H^{-1}(\curlr)
 \]
 also for $\phi \in L_2(\Omega)^3$, where we understand $ \curlc \phi$ as  $ \curlrd \phi$ also if $\phi \in L_2(\Omega)^3\setminus \dom(\curlc)$.
 \end{remark} 
}

\section{The Helmholtz decomposition}\label{sec:HD}

It is the aim of this section to revisit the Helmholtz decomposition for coefficients satisfying the well-posedness criterion in Theorem \ref{thm:TW-Mana}. For this, throughout this section, we let $\Omega\subseteq \R^3$ be open. Next, we \textcolor{black}{quickly recall the} vector analytical operators needed throughout this manuscript from Example \ref{exa:Cex}. We use standard notation from the theory of Sobolev spaces and have $\grad\colon H^1(\Omega)\subseteq L_2(\Omega)\to L_2(\Omega)^3, \phi\mapsto \nabla \phi$ and $\gradc\coloneqq \grad|_{H_0^1(\Omega)}$. Moreover, $-\dive=\gradc^*$ and $\curl\colon H(\curl)\subseteq L_2(\Omega)^3\to L_2(\Omega)^3$ endowed with their respective maximal domains $H(\dive,\Omega)=H(\dive)$ and $H(\curl,\Omega)=H(\curl)$. Recall that $\curl^*= \curlc$, and that, thus, $\curl^*$ is the closure of $C_c^\infty(\Omega)^3$ with respect to the graph norm of $\curl$. In this way, we implement the so-called electric boundary condition, see \ref[eq.~(1.6)]{LS19}.

As a general assumption in this section, we shall ask $\Omega$ to be \textbf{range closed}; that is, both
\[
    \ran(\mathring{\grad})\subseteq L_2(\Omega)^3\text{ and }\ran(\curl)\subseteq L_2(\Omega)^3 \text{ are closed,}
\] where one finds settings for $\Omega$ being range closed in Example \ref{ex:rangeclosed}.

The classical Helmholtz decomposition (see e.g.~\cite{Picard1990}) asserts that we can decompose any $L_2(\Omega)^3$-vector field $E$ with $L_2(\Omega)^3$-orthogonal decomposition in the form
\[
   E = \gradc u + \curl H + x,
\]
where $u\in H_0^1(\Omega)$, $H\in \dom(\curl)\cap \ker(\curl)^\bot$ and $x\in \mathcal{H}_D(\Omega)$ are uniquely determined. Here, 
\[
  \mathcal{H}_D(\Omega)\coloneqq \ker(\dive)\cap \ker(\mathring{\curl})
\]
denotes the space of \textbf{harmonic Dirichlet fields}. Tailored for static Maxwell problems, the Helmholtz decomposition with variable coefficients looks similar and still asserts a direct though in general not orthogonal decomposition. For this, we call $\varepsilon \in \cB(L_2(\Omega)^3)$ \textbf{admissible}, if the following three conditions hold:\vspace*{0.1cm}

\begin{enumerate}
 \item[\textbf{(a1)}] $\varepsilon$ is invertible;\vspace*{0.1cm}
 \item[\textbf{(a2)}] $\iota^*\varepsilon\iota$ is invertible, where $\iota\colon \ran(\gradc)\hookrightarrow L_2(\Omega)^3$;\vspace*{0.1cm}
  \item[\textbf{(a3)}] $\kappa^*\varepsilon^{-1}\kappa$ is invertible, where $\kappa\colon \ran(\curl)\hookrightarrow L_2(\Omega)^3$.\vspace*{0.1cm}
\end{enumerate}

\begin{remark}\label{rem:applMana} (a) The assumption (a2) on $\varepsilon$ implies that Theorem \ref{thm:TW-Mana} applies to $C=\gradc$ (then $C_{\rdd}= \iota^*\gradc$) in order that for all $f\in H^{-1}(\iota^*\gradc)=H^{-1}(\Omega)$ the variational problem
\[
   \langle\varepsilon \gradc u,\gradc u\rangle = f(\phi)\quad(\phi\in H_0^1(\Omega)=\dom(\gradc))
\]admits a unique solution $u\in H_0^1(\Omega)$. In the following we shall refer to this variational problem also by employing the usual distributional divergence $\gradcd$ which extends the above defined $\dive=\gradc^*$, reusing the notion $\dive$ and just writing
\[
   -\dive \varepsilon \gradc u = f.
\]
(b) Quite similarly, the assumption (a1) in conjunction with (a3) enable the usage of Theorem \ref{thm:TW-Mana} for $C=\curl$. Recall that we assumed $\Omega$ to be range closed, particularly implying that $\ran(C)=\ran(\curl)$ is closed. In this case, we obtain $C_{\rdd}=\curlr$ and, thus, $\dom(\curlr)=H(\curl)\cap \ker(\curl)^\bot$. Consequently, due to Theorem \ref{thm:TW-Mana}, for all $g\in H^{-1}(\curlr)$, there exists a unique $v\in H^1(\curlr)$ such that
\[
   \langle\varepsilon^{-1} \curl v,\curl \psi \rangle = g(\psi)\quad(\psi\in H^1(\curlr)).
\]We emphasise that the boundary condition is (weakly) imposed by this variational formulation. Indeed, if $g\in L_2(\Omega)^3$, we would obtain $\varepsilon^{-1}\curl v \in \dom(\curl^*)=\dom(\curlc)$. Hence, the tangential component of $\varepsilon^{-1}\curl v$ vanishes at the boundary (in a generalised sense). 
Similar to the notation in (a) as a reminder of the boundary condition, we sometimes reuse the notation for the adjoint of $\curl^*=\curlc$ to denote $\curld$ (or even write $\curlc$ instead of $\curlrd$, see Remark \ref{rem:rdc} above.
\textcolor{black}{
(c) Even though the coefficients in \cite{Picard1990} can be nonlinear,  admissible coefficients in the present sense complement the ones treated in \cite{Picard1990} as we do not require any monotonicity.}
\end{remark}
The latter remark together with the following shorthands finally puts us in the position to state and prove the generalised Helmholtz decomposition we are aiming for in this section. 
For admissible $\varepsilon$, we define
\[
   \mathcal{H}_\varepsilon \coloneqq \ker(\dive\varepsilon)\cap \ker(\curlc) \text{ and }
 \mathcal{H}^\varepsilon \coloneqq \ker(\dive)\cap \ker(\curlc\varepsilon^{-1}).
\]
\begin{theorem}[generalised Helmholtz decomposition]\label{thm:genHD} Let $\Omega\subseteq \R^3$ be open and range closed. Let $\varepsilon \in \cB(L_2(\Omega)^3)$ be admissible. Then the following decompositions are direct
\begin{align*}
    L_2(\Omega)^3 &= \ran(\gradc) \dot{+} \varepsilon^{-1}\ran({\curl}) \dot{+} \mathcal{H}_\varepsilon \\
    & = \varepsilon\ran(\gradc) \dot{+} \ran({\curl}) \dot{+} \mathcal{H}^\varepsilon.
\end{align*}
Moreover, 
\[
   \varepsilon \colon \mathcal{H}_\varepsilon \to \mathcal{H}^{\varepsilon}, x\mapsto \varepsilon x
\]
is a bijection. 
\end{theorem}
\begin{proof}
Since the rationale for both decompositions is (almost) the same, we only show the first decomposition. We show that this decomposition is direct first. Let $E\in  \ran(\mathring{\grad}) \cap \varepsilon^{-1}\ran({\curl})$. Then we find $u\in H_0^1(\Omega)$ and $H\in \dom(\curl)\cap \ker(\curl)^\bot$ such that
\[
    E = \gradc u = \varepsilon^{-1}\curl H.
\]Applying $\dive \varepsilon$, we infer
\[
   \dive \varepsilon E = \dive \varepsilon \mathring{\grad} u = \dive \curl H =0.
\]Thus, $u=0$, by Theorem \ref{thm:TW-Mana} and (a2) and, hence, $E=0$.

If $E=\ran(\mathring{\grad})\cap \mathcal{H}_\varepsilon$, then we find $u\in H_0^1(\Omega)$ and $x\in \ker(\dive\varepsilon)\cap \ker(\mathring{\curl})$ such that
\[
   E = {\gradc} u = x.
\]Again, applying $\dive\varepsilon$, we obtain
\[
    \dive\varepsilon E = \dive \varepsilon {\gradc} u =\dive\varepsilon x = 0.
\]
Hence, by Theorem \ref{thm:TW-Mana} and (a2), $u=0$ and, thus, $E=0$.

If $E\in \ran(\curl)\cap \mathcal{H}_\varepsilon$, then we find $H\in \dom(\curl)\cap \ker(\curl)^\bot$ and $x\in \ker(\dive\varepsilon)\cap \ker(\mathring{\curl})$ such that
\[
   E = \varepsilon^{-1}\curl H = x. 
\]Applying $\curlc$, we infer
\[
   \curlc E = \curlc \varepsilon^{-1}\curl H = \curl x =0.
\]
Hence, by Theorem \ref{thm:TW-Mana} and (a2), $H=0$, and, therefore, $E=0$.

Finally, let $E\in L_2(\Omega)^3$ we define $u\in H_0^1(\Omega)$ and $H\in \ran(\curl)\cap \ker(\curl)^\bot$ satisfying
\[
   \dive\varepsilon \gradc  u = \dive\varepsilon E\text{ and } \curlc \varepsilon^{-1} \curl H = \curlc\varepsilon^{-1} E.
\]
Then, it follows that
\[
  x\coloneqq E-\mathring{\grad} u -\varepsilon^{-1}\curl H \in \ker(\dive\varepsilon)\cap \ker(\mathring{\curl})=\mathcal{H}_\varepsilon.
\]The last statement in the theorem is an easy consequence of the definition of $\mathcal{H}_\varepsilon$ and $\mathcal{H}^\varepsilon$.
\end{proof}
Theorem \ref{thm:genHD} asserts that the spaces $\ran(\mathring{\grad})$, $\varepsilon^{-1}\ran(\curl)$ and $\mathcal{H}_\varepsilon$ are complemented in $L_2(\Omega)^3$ and as these spaces are closed the projection onto these spaces is continuous. For $\varepsilon\ran(\mathring{\grad})$, $\ran(\curl)$ and $\mathcal{H}^\varepsilon$ a similar observation holds. 

Next, it is the aim to provide some connections of the continuous projections
\begin{equation}\label{eq:peps}
   \pi_\varepsilon \colon \ran(\mathring{\grad}) \dot{+} \varepsilon^{-1}\ran({\curl}) \dot{+} \mathcal{H}_\varepsilon \to \mathcal{H}_\varepsilon
   \end{equation}
   and
   \begin{equation}\label{eq:epsp}
   \pi^\varepsilon \colon \varepsilon\ran(\mathring{\grad}) \dot{+} \ran({\curl}) \dot{+} \mathcal{H}^\varepsilon \to \mathcal{H}^\varepsilon\end{equation}
   to the orthogonal projection
   \[
      \pi_D \colon L_2(\Omega)^3 \to \mathcal{H}_D(\Omega)\coloneqq \ker(\dive)\cap \ker(\mathring{\curl}).
   \]
   \begin{corollary}\label{cor:reformpieps} Let $\varepsilon\in \cB(L_2(\Omega)^3)$ be admissible. Then the following statements hold:
   
   (a) $\pi_D|_{\mathcal{H}_\varepsilon}$ is bijective with inverse given by $\pi_\varepsilon|_{\mathcal{H}_D(\Omega)}$.
   
   (b) Let $E\in L_2(\Omega)^3$. Then
   \begin{align*}
       \pi_\varepsilon\pi_D E &=  (1-\mathring{\grad}(\dive\varepsilon\mathring{\grad})^{-1}\dive\varepsilon)\pi_D E\text{ and } \\
             \pi_D \pi_\varepsilon E &=  (1-\mathring{\grad}(\dive\mathring{\grad})^{-1}\dive)\pi_\varepsilon E.
   \end{align*}    
   \end{corollary}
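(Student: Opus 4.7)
My plan is to exploit both the generalised Helmholtz decomposition from Theorem \ref{thm:genHD} and the classical one (i.e., the case $\varepsilon = \mathrm{id}$), using Theorem \ref{thm:TW-Mana} for the uniqueness statements that force certain components of a decomposition to vanish.

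For part (a), I would first show that $\pi_D|_{\cH_\varepsilon}$ is well-defined as a map into $\cH_D(\Omega)$ and injective. Given $y\in\cH_\varepsilon$, note that $y\in\ker(\curlc)$, so in the classical orthogonal decomposition $L_2(\Omega)^3 = \ran(\gradc)\oplus\ran(\curl)\oplus\cH_D(\Omega)$, the $\ran(\curl)$-component of $y$ vanishes (since $\ran(\curl)\subseteq\ker(\curlc)^\bot$). Hence $y = \gradc u + x$ with $x = \pi_D y\in\cH_D(\Omega)$. If $\pi_D y = 0$, then $y = \gradc u$ and applying $\dive\varepsilon$ gives $\dive\varepsilon\gradc u = \dive\varepsilon y = 0$, so Theorem \ref{thm:TW-Mana} together with (a2) forces $u=0$ and hence $y=0$. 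For surjectivity, given $x\in\cH_D(\Omega)$ I would apply the generalised Helmholtz decomposition and write $x = \gradc u + \varepsilon^{-1}\curl H + y$ with $y = \pi_\varepsilon x$. Applying $\curlc$ and using $\curlc x = 0$, $\curlc\gradc u = 0$, $\curlc y = 0$ yields $\curlc\varepsilon^{-1}\curl H = 0$; by Theorem \ref{thm:TW-Mana} applied to $\curl$ (using (a1) and (a3)), $H=0$. Thus $x = \gradc u + y$, and applying $\pi_D$ with $\pi_D\gradc u = 0$ and $\pi_D x = x$ gives $\pi_D y = x$. This simultaneously shows surjectivity and that $\pi_\varepsilon|_{\cH_D(\Omega)}$ is the two-sided inverse of $\pi_D|_{\cH_\varepsilon}$.

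For the first identity in (b), write $F \coloneqq \pi_D E \in \cH_D(\Omega)$ and decompose it via Theorem \ref{thm:genHD} as $F = \gradc u + \varepsilon^{-1}\curl H + y$ with $y = \pi_\varepsilon F = \pi_\varepsilon\pi_D E$. The same $\curlc$-argument as above (applied to $F$ instead of $x$) yields $H=0$, so $F = \gradc u + y$. Now applying $\dive\varepsilon$ (interpreted as $\gradcd\varepsilon$, mapping $L_2$ to $H^{-1}(\Omega)$) and using $y\in\ker(\dive\varepsilon)$ gives $\dive\varepsilon F = \dive\varepsilon\gradc u$; by the invertibility of $\dive\varepsilon\gradc \colon H_0^1(\Omega)\to H^{-1}(\Omega)$ from Theorem \ref{thm:TW-Mana}, we obtain $u = (\dive\varepsilon\gradc)^{-1}\dive\varepsilon F$, whence $y = F - \gradc(\dive\varepsilon\gradc)^{-1}\dive\varepsilon F$, which is exactly the claimed formula.

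The second identity is entirely analogous, but now using the classical Helmholtz decomposition on $G\coloneqq \pi_\varepsilon E\in\cH_\varepsilon$. Since $G\in\ker(\curlc)$ and $\ran(\curl)\subseteq\ker(\curlc)^\bot$, the $\ran(\curl)$ component vanishes by orthogonality (no appeal to uniqueness needed), so $G = \gradc v + z$ with $z = \pi_D G = \pi_D\pi_\varepsilon E$. Applying $\dive = \gradcd$ kills $z\in\ker(\dive)$ and yields $\dive G = \dive\gradc v$; inverting the isomorphism $\dive\gradc\colon H_0^1(\Omega)\to H^{-1}(\Omega)$ (Theorem \ref{thm:TW-Mana} with $\varepsilon=\mathrm{id}$) gives $v=(\dive\gradc)^{-1}\dive G$ and hence $z = (1-\gradc(\dive\gradc)^{-1}\dive)G$. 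The main subtlety throughout is keeping track of the fact that $\dive$ in these expressions denotes the distributional extension $\gradcd$ of $\gradc^*$, so that the various inverses produced by Theorem \ref{thm:TW-Mana} act between the correct function spaces; I do not foresee a genuine obstacle beyond this bookkeeping.
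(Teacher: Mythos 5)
Your proof is correct and follows essentially the same route as the paper: decompose via Theorem \ref{thm:genHD} (resp.\ the orthogonal case $\varepsilon=\id$), kill the $\curl$-component, then apply $\dive\varepsilon$ (resp.\ $\dive$) and invert via Theorem \ref{thm:TW-Mana}. The only difference is cosmetic: where you invoke orthogonality of the classical Helmholtz decomposition to see that the $\ran(\curl)$-component of an element of $\ker(\curlc)$ vanishes, the paper instead applies $\curlc$ to the decomposition and concludes via Theorem \ref{thm:TW-Mana} — both are equally valid and the substance is identical.
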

\begin{proof}
  (a) Let $x_\varepsilon \in \mathcal{H}_\varepsilon$. If $\pi_D x_\varepsilon =0$, then by Theorem \ref{thm:genHD} (for $\varepsilon=\id$) we find $u\in H_0^1(\Omega)$, $v\in H^1(\curlr)$ with
  \[
      x_\varepsilon = \gradc u + \curlr v.
  \]
  Applying $\curlc$, we infer
  \[
     0= \curlc x_\varepsilon = \curlc\curlr v.
  \]Hence, $v=0$. The application of $\dive\varepsilon$ yields
  \[
     0 = \dive\varepsilon x_\varepsilon =\dive\varepsilon \gradc u.
  \] Hence, $u=0$. Thus, $\pi_D|_{\mathcal{H}_\varepsilon}$ is injective. Now, let $x \in \mathcal{H}_D(\Omega)$. Again, by Theorem \ref{thm:genHD} applied in its full generality, we obtain uniquely determined $u\in H_0^1(\Omega)$, $v\in H^1(\curlr)$ and $x_\varepsilon\in \mathcal{H}_\varepsilon$ such that
  \[
     x = \gradc u + \varepsilon^{-1}\curl v +x_\varepsilon.
  \]Then, applying $\curlc$ yields
  \[
     0 = \curlc \varepsilon^{-1}\curl v =\curlrd\kappa^* \varepsilon^{-1}\kappa \curlr v;
  \]thus $v=0$. Hence,
  \[\pi_D (x_\varepsilon) = \pi_D (x-\gradc u) =\pi_D x =x.
\]
Computing the inverse works similarly.

(b) We only show the first equality, the second is proven similarly. We let $\pi_DE=x$. Using the Helmholtz decomposition again, we obtain
\[
   x =\pi_DE = \gradc u_\varepsilon + \varepsilon^{-1}\curl v_\varepsilon +x_\varepsilon 
\]
Applying $\curlc$, we infer
\[
  0=\curlc x = \curlc \varepsilon^{-1}\curl v_\varepsilon
\]
and, hence, $v_\varepsilon=0$ by (a3) and Theorem \ref{thm:TW-Mana}. Next, 
\[
  \dive\varepsilon x = \dive\varepsilon \gradc u_\varepsilon. 
\]
Thus, by (a2) and Theorem \ref{thm:TW-Mana}, $
u_\varepsilon =   (\dive\varepsilon \gradc)^{-1}\dive\varepsilon x$ and so
\begin{align*}
   x &  = \gradc u_\varepsilon + \varepsilon^{-1}\curl v_\varepsilon +x_\varepsilon \\
   & =\gradc (\dive\varepsilon \gradc)^{-1}\dive\varepsilon x  +x_\varepsilon,
\end{align*}which implies the desired equality.
\end{proof}

\begin{remark}\label{rem:posdefadm}If $\varepsilon\in \cB(L_2(\Omega)^3)$ satisfies $\Re \varepsilon\geq c>0$, then $\varepsilon$ satisfies (a1), (a2), and (a3), see \cite[Proposition 6.2.3(b)]{Seifert2022}.
\end{remark}

\begin{proposition} Let $\varepsilon\in \cB(L_2(\Omega)^3)$ satisfy (a1). Assume that $\mathcal{H}_{\textnormal{D}}(\Omega)=\{0\}$. Then (a2) $\iff$ (a3).
\end{proposition}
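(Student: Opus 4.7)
The plan is to exploit the orthogonal Helmholtz decomposition guaranteed by $\mathcal{H}_D(\Omega)=\{0\}$ and then to conclude by a $2\times 2$ Schur-complement argument.

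First I would observe that $\ran(\gradc)\perp\ran(\curl)$ in $L_2(\Omega)^3$ always: for $\phi\in C_c^\infty(\Omega)$ and $H\in \dom(\curl)$ one has $\langle \gradc\phi,\curl H\rangle = \langle \curlc\gradc\phi,H\rangle=0$ because $\curlc\gradc=0$ on $C_c^\infty$, and the identity then extends to all of $H_0^1(\Omega)$ by density. Combined with
\[
   (\ran(\gradc)\oplus \ran(\curl))^\bot = \ker(\dive)\cap\ker(\curlc) = \mathcal{H}_D(\Omega)=\{0\},
\]
this furnishes the orthogonal decomposition $L_2(\Omega)^3 = \ran(\gradc)\oplus\ran(\curl)$. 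With respect to it I would write $\varepsilon$ and its inverse (which exists by (a1)) as $2\times 2$ block operators
\[
   \varepsilon = \begin{pmatrix} a_{00} & a_{01} \\ a_{10} & a_{11}\end{pmatrix},\qquad \varepsilon^{-1}=\begin{pmatrix} b_{00} & b_{01} \\ b_{10} & b_{11}\end{pmatrix},
\]
where $a_{00}=\iota^*\varepsilon\iota$ and $b_{11}=\kappa^*\varepsilon^{-1}\kappa$. In this language (a2) is precisely invertibility of $a_{00}$ and (a3) is precisely invertibility of $b_{11}$.

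The conclusion then rests on the classical block factorisation: if $a_{00}$ is invertible, then
\[
   \varepsilon = \begin{pmatrix}1 & 0\\ a_{10}a_{00}^{-1} & 1\end{pmatrix}\begin{pmatrix} a_{00} & 0 \\ 0 & S\end{pmatrix}\begin{pmatrix}1 & a_{00}^{-1}a_{01}\\ 0 & 1\end{pmatrix},\qquad S\coloneqq a_{11}-a_{10}a_{00}^{-1}a_{01},
\]
with the outer triangular factors invertible with explicit inverses. Since $\varepsilon$ is invertible by (a1), this forces $S$ to be invertible; inverting the factorisation gives $b_{11}=S^{-1}$, which proves (a3). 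The reverse implication follows by symmetry, applying the analogous factorisation to $\varepsilon^{-1}$ (now with the invertible block $b_{11}$ in the bottom-right slot) to obtain $a_{00}=(b_{00}-b_{01}b_{11}^{-1}b_{10})^{-1}$, which is (a2).

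The argument is essentially algebraic: the only place where the hypothesis $\mathcal{H}_D(\Omega)=\{0\}$ really enters is to suppress the harmonic summand and reduce the Helmholtz decomposition to two orthogonal pieces, which in turn is what legitimises identifying (a2) and (a3) with the corner blocks of $\varepsilon$ and $\varepsilon^{-1}$, respectively. Accordingly I do not expect a substantive obstacle; the work is confined to keeping track of the block structure and the two Schur complements.
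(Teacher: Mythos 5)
Your proposal is correct and follows the same route as the paper: the paper's own proof is a one-line reference to Lemma~\ref{lem:Schur}, and you have simply spelled out how the orthogonal decomposition $L_2(\Omega)^3 = \ran(\gradc)\oplus\ran(\curl)$ (valid precisely because $\mathcal{H}_D(\Omega)=\{0\}$) turns (a2) and (a3) into invertibility of the diagonal corner blocks of $\varepsilon$ and $\varepsilon^{-1}$, after which the Schur-complement identity $[\varepsilon^{-1}]_{11} = (a_{11}-a_{10}a_{00}^{-1}a_{01})^{-1}$ and its mirror (as in Lemma~\ref{lem:inverse}) give both implications.
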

\begin{proof}
  The assertion follows from the formula stated in Lemma \ref{lem:Schur} below, which in turn follows from a straightforward computation.\end{proof}
  
\begin{lemma}\label{lem:Schur} Let $\cL_0,\cL_1$ be Hilbert spaces and $a=(a_{jk})_{j,k\in\{0,1\}} \in \cB(\cL_0\oplus\cL_1)$ and assume $a_{00}$ is invertible. Then
\[
    \begin{pmatrix} 1 &0 \\  - a_{10}a_{00}^{-1} & 1 \end{pmatrix}  \begin{pmatrix} a_{00} & a_{01} \\ a_{10} & a_{11} \end{pmatrix}
    \begin{pmatrix} 1 & -a_{00}^{-1}a_{01} \\0 & 1 \end{pmatrix}
     =
\begin{pmatrix} a_{00} & 0 \\ 0 & a_{11}-a_{10}a_{00}^{-1}a_{01} \end{pmatrix}.      
\]
   Moreover, the following statements are equivalent:
\begin{enumerate}
   \item[(i)] $a$ invertible;
   \item[(ii)] $a_S \coloneqq a_{11}-a_{10}a_{00}^{-1}a_{01}$ invertible.
   \end{enumerate}
   In either case, we have
   \[
   \begin{pmatrix} a_{00} & a_{01} \\ a_{10} & a_{11} \end{pmatrix}^{-1} =
    \begin{pmatrix} a_{00}^{-1} + a_{00}^{-1}a_{01} a_S^{-1} a_{10} a_{00}^{-1} & - a_{00}^{-1}a_{01}a_S^{-1} \\ -a_{S}^{-1}a_{10}a_{00}^{-1} & a_S^{-1}\end{pmatrix}.
   \]
\end{lemma}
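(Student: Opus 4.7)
The plan is to prove the lemma by direct block matrix computation, exploiting the triangular structure of the matrices proposed in the factorisation. The key observation is that the two outer matrices
\[
L \coloneqq \begin{pmatrix} 1 & 0 \\ -a_{10}a_{00}^{-1} & 1 \end{pmatrix}, \qquad U \coloneqq \begin{pmatrix} 1 & -a_{00}^{-1}a_{01} \\ 0 & 1 \end{pmatrix}
\]
are unipotent block-triangular in $\cB(\cL_0\oplus\cL_1)$ and hence trivially invertible, with inverses obtained by negating the off-diagonal blocks. Consequently, the entire content of the lemma is essentially a bookkeeping exercise once this LDU-style decomposition has been written down.

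First I would verify the factorisation identity by computing $L\,a\,U$ in two steps. Multiplying $aU$ on the right replaces the $(0,1)$-entry by $a_{01}-a_{00}a_{00}^{-1}a_{01}=0$ and the $(1,1)$-entry by $a_{11}-a_{10}a_{00}^{-1}a_{01}=a_S$, while leaving the left column unchanged. Multiplying by $L$ on the left then turns the $(1,0)$-entry into $-a_{10}a_{00}^{-1}a_{00}+a_{10}=0$ without touching the other entries, yielding the block-diagonal matrix with entries $a_{00}$ and $a_S$ on the diagonal. This establishes the first displayed formula.

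For the equivalence of (i) and (ii), I would rewrite the factorisation as $a = L^{-1}\begin{pmatrix}a_{00} & 0 \\ 0 & a_S\end{pmatrix}U^{-1}$ and observe that, since $L$ and $U$ are invertible, $a$ is invertible iff the block-diagonal operator between them is. A block-diagonal operator is invertible iff each diagonal block is invertible; as $a_{00}$ is invertible by hypothesis, this reduces to the invertibility of $a_S$. For the explicit inverse, I invert both sides of the factorisation to obtain
\[
a^{-1} = U\begin{pmatrix}a_{00}^{-1} & 0 \\ 0 & a_S^{-1}\end{pmatrix}L,
\]
and carry out the two block multiplications: the product of the right two factors has rows $(a_{00}^{-1},0)$ and $(-a_S^{-1}a_{10}a_{00}^{-1},a_S^{-1})$, and multiplying on the left by $U$ adds $-a_{00}^{-1}a_{01}$ times the second row to the first, producing precisely the claimed inverse formula.

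No real obstacle arises here; the only subtlety is making sure that $a_{00}^{-1}$ is taken in $\cB(\cL_0)$ and $a_S^{-1}$ in $\cB(\cL_1)$, so that all block compositions are well defined, which is automatic once (i) or (ii) is assumed. The whole argument is purely algebraic and uses nothing beyond the fact that unipotent triangular block operators are invertible.
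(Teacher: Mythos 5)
Your proof is correct and is exactly the ``straightforward computation'' the paper alludes to: the paper does not spell out a proof of this lemma but only notes (in the proof of the preceding proposition) that it follows from a direct computation. Your block LDU factorisation $a = L^{-1}\operatorname{diag}(a_{00},a_S)\,U^{-1}$, the resulting equivalence of invertibility with that of $a_S$, and the inversion $a^{-1}=U\operatorname{diag}(a_{00}^{-1},a_S^{-1})L$ are precisely the intended argument and all steps check out.
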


\section{Two formulations for electrostatics}\label{sec:estatic}

    In this section, we present two entirely equivalent formulations of the problem for electrostatics. \textcolor{black}{For these two formulations note that the differential equations themselves correspond to the classical equations for electrostatics with electric boundary conditions, see \cite{FL11,LS19}. Similar to \cite{Picard1982} in order to warrant well-posedness, one needs to require additional information about the harmonic Dirichlet fields. Essentially in the two problem formulations to follow, the equations can be written in either form by suitable substitutions, which depends on the admissible coefficient $\varepsilon$. The details are provided as follows. If there is no risk of confusion, we will also use Remark \ref{rem:rdc}(b) and write $\curlc$ instead of $\curlrd$.}

Similar to the previous section, we let $\Omega\subseteq \R^3$ be open and range closed; that is, we assume that both
\[
   \ran(\gradc)\subseteq L_2(\Omega)^3\text{ and } \ran(\curl)\subseteq L_2(\Omega)^3 \text{ are closed.}
\]
\textcolor{black}{Recall that Example \ref{exa:Cex} provides the definitions for the vector-analytical operators and Example \ref{ex:rangeclosed} contains some examples for $\Omega$ being range closed.} 

Throughout this section, we assume that $\varepsilon\in \cB(L_2(\Omega)^3)$ is admissible; that is, $\varepsilon$ satisfies the conditions (a1), (a2), and (a3) from the previous section. The solution theory for the two problems to be defined next is based on the Helmholtz decomposition presented in Theorem \ref{thm:genHD}. Since we frequently employ the generalised Helmholtz decomposition in the following, we will not mark every instance of application of this theorem.  We will also use the continuous projections $\pi_\varepsilon$ and $\pi^\varepsilon$ introduced in \eqref{eq:peps} and \eqref{eq:epsp}. Finally, we make use of the distribution spaces $H^{-1}(\Omega)$ and $H^{-1}(\curlr)$ mentioned in Remark \ref{rem:applMana}. 
\begin{problem}\label{prb:classical} For given $f\in H^{-1}(\Omega)$, $g\in H^{-1}(\curlr)$, and $x_\varepsilon \in \ker(\dive\varepsilon)\cap \ker(\mathring{\curl})$ find
\[
   E \in L_2(\Omega)^3
\]such that
\begin{equation}\tag{$\mathbb{P}_\varepsilon(f,g,x_\varepsilon)$}\label{prb:class}
   \begin{cases} \dive \varepsilon E = f,&\\
\curlc E=g,&\\
   \pi_{\varepsilon}E=x_\varepsilon.
   \end{cases}
\end{equation}
\end{problem}
\begin{problem}\label{prb:optional} For given $f\in H^{-1}(\Omega)$, $g\in H^{-1}(\curlr)$, and $x^\varepsilon \in \ker(\dive)\cap \ker(\curlc\varepsilon^{-1})$ find
\[
   H \in L_2(\Omega)^3
\]such that
\begin{equation}\tag{$\mathbb{P}^\varepsilon(f,g,x^\varepsilon)$}\label{prb:opt}
   \begin{cases} \dive H= f,&\\
\curlc \varepsilon^{-1}H=g,&\\
   \pi^\varepsilon H =x^\varepsilon.
   \end{cases}
\end{equation}
\end{problem}

We establish well-posedness of either of the above problems:

\begin{theorem}\label{thm:solform} Let $\Omega\subseteq \R^3$ be open and range closed and $\varepsilon\in \cB(L_2(\Omega)^3)$ admissible. Let $f\in H^{-1}(\Omega)$, $g\in H^{-1}(\curlr)$, and $x_\varepsilon\in \mathcal{H}_\varepsilon$ and $x^\varepsilon\in \mathcal{H}^\varepsilon$.

 (a) Problem \ref{prb:classical} is uniquely solvable in $L_2(\Omega)^3$ with continuous solution operator; the solution is given by
\[
   E = \gradc( \dive\varepsilon\gradc )^{-1}f+\varepsilon^{-1}\curlr (\curlrd\varepsilon^{-1}\curlr)^{-1} g +x_\varepsilon.
\]If $f\in L_2(\Omega)$ and $g\in \ran(\curlc)$, then $E\in \dom(\dive\varepsilon)\cap \dom(\curlc)$; with corresponding continuous solution operator.

(b) Problem \ref{prb:optional} is uniquely solvable in $L_2(\Omega)^3$ with continuous solution operator; the solution is given by
\[
   H = \varepsilon{\gradc}( \dive\varepsilon{\gradc} )^{-1}f+\curlr ({\curlrd}\varepsilon^{-1}\curlr)^{-1} g +x^\varepsilon.
\]If $f\in L_2(\Omega)$ and $g\in \ran(\curlc)$, then $H\in \dom(\dive)\cap \dom({\curlc}\varepsilon^{-1})$; with corresponding continuous solution operator.

(c) The Problems \ref{prb:classical} and \ref{prb:optional} are equivalent in the sense that for every solution $E$ solving \eqref{prb:class}, $H=\varepsilon E$ is a solution for \eqref{prb:opt} with $x^\varepsilon =\varepsilon x_\varepsilon $; similarly, if $H$ solves \eqref{prb:opt}, then $E=\varepsilon^{-1}H$ solves \eqref{prb:class} with $x_\varepsilon=\varepsilon^{-1}x^\varepsilon$.
\end{theorem}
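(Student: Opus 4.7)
The plan is to reduce \eqref{prb:class} to three scalar problems via the generalised Helmholtz decomposition of Theorem \ref{thm:genHD}, and then to invoke Theorem \ref{thm:TW-Mana} for the two nontrivial ones. For (a), I make the ansatz
\[
  E \;=\; \gradc u \;+\; \varepsilon^{-1}\curlr v \;+\; x_\varepsilon,
\]
with $u\in H_0^1(\Omega)$ and $v\in H^1(\curlr)$ to be determined and $x_\varepsilon\in\mathcal{H}_\varepsilon$ prescribed. Applying $\dive\varepsilon$, $\curlc$, and $\pi_\varepsilon$ in turn, the identities $\dive\curl=0$, $\curlc\gradc=0$ together with the very definition $\mathcal{H}_\varepsilon=\ker(\dive\varepsilon)\cap\ker(\curlc)$ make each equation pick out exactly one of the three summands. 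The first collapses to $\dive\varepsilon\gradc u=f$, uniquely solvable by (a2) and Theorem \ref{thm:TW-Mana}; the second to $\curlrd\varepsilon^{-1}\curlr v=g$ in $H^{-1}(\curlr)$, uniquely solvable by (a1), (a3) and Theorem \ref{thm:TW-Mana} applied with $C=\curl$; the third holds by construction, since the first two summands lie in $\ker(\pi_\varepsilon)=\ran(\gradc)\dot{+}\varepsilon^{-1}\ran(\curl)$. Summing yields the explicit formula claimed.

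Uniqueness is then immediate: a homogeneous solution has all three Helmholtz components equal to zero by the same three reductions and the injectivity of the two scalar inverses. The continuous dependence on $(f,g,x_\varepsilon)$ follows from the continuity of those inverses together with the boundedness of the projections onto the three closed direct summands. For the regularity refinement, if $f\in L_2(\Omega)$ then the closed-operator statement for $D_{\rdd,a}$ in Theorem \ref{thm:TW-Mana} gives $\varepsilon\gradc u\in\dom(\dive)$; adding the two trivial contributions (namely $\curlr v\in\dom(\dive)$ with zero divergence and $\varepsilon x_\varepsilon\in\dom(\dive)$ with zero divergence by definition of $\mathcal{H}_\varepsilon$) yields $E\in\dom(\dive\varepsilon)$. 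If moreover $g\in\ran(\curlc)\subseteq L_2(\Omega)^3$, Remark \ref{rem:applMana}(b) gives $\varepsilon^{-1}\curlr v\in\dom(\curlc)$, and the variational identity $\curlrd\varepsilon^{-1}\curlr v=g$ translates, via the adjoint relation on $\dom(\curl)\supseteq H^1(\curlr)$, into $\curlc E-g\in\ker(\curl)$. Since both $\curlc E\in\ran(\curlc)$ and (by assumption) $g\in\ran(\curlc)$ lie in $\ker(\curl)^\bot$, the difference vanishes, so $\curlc E=g$ holds pointwise in $L_2(\Omega)^3$. This last point, distinguishing $\curlc$ as an $L_2$-operator from $\curlrd$ acting into $H^{-1}(\curlr)$, is the main subtlety.

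Part (b) is the mirror image of (a) applied to the second direct sum in Theorem \ref{thm:genHD}: the same two scalar equations arise, with the roles of $\varepsilon$ and $\varepsilon^{-1}$ interchanged, and I do not repeat the argument. Part (c) is a direct substitution: given $E=\gradc u+\varepsilon^{-1}\curlr v+x_\varepsilon$ solving \eqref{prb:class}, multiplying by $\varepsilon$ yields
\[
  \varepsilon E \;=\; \varepsilon\gradc u \;+\; \curlr v \;+\; \varepsilon x_\varepsilon,
\]
which is precisely the decomposition of $\varepsilon E$ in the second Helmholtz sum; hence $\dive(\varepsilon E)=f$, $\curlc\varepsilon^{-1}(\varepsilon E)=\curlc E=g$, and $\pi^\varepsilon(\varepsilon E)=\varepsilon x_\varepsilon$, where the last identity uses the bijection $\varepsilon\colon\mathcal{H}_\varepsilon\to\mathcal{H}^\varepsilon$ from Theorem \ref{thm:genHD}. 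The reverse implication is entirely analogous with $\varepsilon^{-1}$ in place of $\varepsilon$.
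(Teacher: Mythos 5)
Your proposal follows the same route as the paper's proof: decompose via the generalised Helmholtz decomposition of Theorem \ref{thm:genHD}, apply $\dive\varepsilon$, $\curlc$ and the projection to decouple into two scalar problems solved by Theorem \ref{thm:TW-Mana}, and obtain (c) by direct substitution using the bijection $\varepsilon\colon\mathcal{H}_\varepsilon\to\mathcal{H}^\varepsilon$. You go somewhat beyond the paper's proof in spelling out the $L_2$-regularity refinement (showing $\curlc E=g$ holds in $L_2$, not merely in $H^{-1}(\curlr)$, by observing that $\curlc E-g\in\ker(\curl)\cap\ker(\curl)^\bot=\{0\}$); that argument is correct and fills in a detail the paper leaves implicit. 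One inaccuracy in your description of (b): you say ``the same two scalar equations arise, with the roles of $\varepsilon$ and $\varepsilon^{-1}$ interchanged,'' which is self-contradictory as stated and, read literally, misleading. Applying $\dive$ and $\curlc\varepsilon^{-1}$ to $H=\varepsilon\gradc u+\curlr v+x^\varepsilon$ produces exactly the \emph{same} reduced equations $\dive\varepsilon\gradc u=f$ and $\curlrd\varepsilon^{-1}\curlr v=g$ as in (a); what changes is only which Helmholtz summand carries the coefficient (the $\grad$-piece carries $\varepsilon$ in (b) while the $\curl$-piece carried $\varepsilon^{-1}$ in (a)), not the operators to be inverted.
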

\begin{proof}
  (a) We decompose 
  \[
      E = \gradc u+\varepsilon^{-1}\curlr F + q,
  \]
  where $F\in \dom(\curlr)$, $u\in H_0^1(\Omega)$ and $q\in \mathcal{H}_\varepsilon$ are uniquely determined by $E$.
  Next, applying $\dive\varepsilon$ and $\curlc$, we infer
  \[
  f = \dive\varepsilon E = \dive\varepsilon\gradc u\text{ and } g= \curlc E =\curlc\varepsilon^{-1}\curl F
  \]Since, clearly, $q=x_\varepsilon$, we deduce
  \[
     E = \gradc( \dive\varepsilon \gradc )^{-1}f+\varepsilon^{-1}\curlr (\curlrd\varepsilon^{-1}\curlr)^{-1} F +x_\varepsilon,
  \]which is easily seen to be continuous; see in particular Theorem \ref{thm:TW-Mana}. Uniqueness follows from the uniqueness in the Helmholtz decomposition.
  
  (b) For Problem \ref{prb:optional}, we use the other Helmholtz type decomposition:
  \[
  H=  \varepsilon\gradc u+\curl F + q,
  \]for $u\in H_0^1(\Omega)$, $F\in \dom(\curlr)$ and $q\in \mathcal{H}^\varepsilon$ being uniquely determined by $H$.
  
  Next, we apply $\dive$ and $\curlc\varepsilon^{-1}$ to obtain
  \[
     f=\dive\varepsilon \gradc u\text{ and } g = \curlc\varepsilon^{-1}\curl F.
  \]Thus,
  \[
      H= \varepsilon\gradc(\dive\varepsilon \gradc)^{-1}f+\curlr (\curlrd\varepsilon^{-1}\curlr)^{-1} g + x^\varepsilon.
  \]  
  (c) Using the solution formula for $H$, we obtain
  \begin{align*}
    H &=  \varepsilon{\gradc}(\dive\varepsilon {\gradc})^{-1}f+\curlr ({\curlrd}\varepsilon^{-1}\curlr)^{-1} F + x^\varepsilon \\
      & = \varepsilon({\gradc}(\dive\varepsilon {\gradc})^{-1}f+\varepsilon^{-1}\curlr ({\curlrd}\varepsilon^{-1}\curlr)^{-1} F + \varepsilon^{-1}x^\varepsilon)\\
      &  = \varepsilon E.
  \end{align*} Finally, note that $\varepsilon$ bijects $\mathcal{H}_\varepsilon$ onto $\mathcal{H}^\varepsilon$.
  \end{proof}
  
  \begin{corollary}\label{cor:Dfields} Let $E\in L_2(\Omega)^3$, $x_\varepsilon \in \mathcal{H}_{\varepsilon}$.
  
    Then
  \[
      \pi_{\varepsilon} E = x_\varepsilon \iff \pi^{\varepsilon} \varepsilon E = \varepsilon x_\varepsilon.
  \]
  \end{corollary}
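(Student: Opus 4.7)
The plan is to establish the intertwining identity $\pi^{\varepsilon}\varepsilon = \varepsilon\pi_{\varepsilon}$ on $L_2(\Omega)^3$; once this is in hand, the corollary is immediate from (a1).

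To prove the identity, I would take an arbitrary $E\in L_2(\Omega)^3$ and use the first Helmholtz decomposition from Theorem \ref{thm:genHD} to write uniquely
\[
   E = \gradc u + \varepsilon^{-1}\curl F + y,
\]
with $u\in H_0^1(\Omega)$, $F\in \dom(\curlr)$, and $y = \pi_{\varepsilon}E \in \mathcal{H}_{\varepsilon}$. Multiplying by $\varepsilon$ gives
\[
   \varepsilon E = \varepsilon\gradc u + \curl F + \varepsilon y.
\]
By the last statement of Theorem \ref{thm:genHD}, $\varepsilon$ maps $\mathcal{H}_{\varepsilon}$ bijectively onto $\mathcal{H}^{\varepsilon}$, so $\varepsilon y \in \mathcal{H}^{\varepsilon}$. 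Thus the right-hand side is precisely the (unique) decomposition of $\varepsilon E$ according to the second Helmholtz decomposition $L_2(\Omega)^3 = \varepsilon\ran(\gradc) \dot{+} \ran(\curl) \dot{+} \mathcal{H}^{\varepsilon}$. Uniqueness yields $\pi^{\varepsilon}\varepsilon E = \varepsilon y = \varepsilon \pi_{\varepsilon} E$.

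Having the identity $\pi^{\varepsilon}\varepsilon E = \varepsilon \pi_{\varepsilon} E$, the stated equivalence follows at once: if $\pi_{\varepsilon} E = x_{\varepsilon}$, apply $\varepsilon$ to both sides to obtain $\pi^{\varepsilon}\varepsilon E = \varepsilon x_{\varepsilon}$; conversely, if $\pi^{\varepsilon}\varepsilon E = \varepsilon x_{\varepsilon}$, then $\varepsilon \pi_{\varepsilon} E = \varepsilon x_{\varepsilon}$, and injectivity of $\varepsilon$ (guaranteed by (a1)) gives $\pi_{\varepsilon} E = x_{\varepsilon}$. There is no real obstacle here; the whole argument hinges on the bijection $\varepsilon\colon \mathcal{H}_{\varepsilon} \to \mathcal{H}^{\varepsilon}$ from Theorem \ref{thm:genHD} together with the uniqueness of the two Helmholtz decompositions.
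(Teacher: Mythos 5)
Your proof is correct and follows essentially the same route as the paper: both arguments rest on multiplying the first generalised Helmholtz decomposition of $E$ by $\varepsilon$, recognising the result as the second Helmholtz decomposition of $\varepsilon E$ via the bijection $\varepsilon\colon \mathcal{H}_\varepsilon \to \mathcal{H}^\varepsilon$, and invoking uniqueness. The only (cosmetic) difference is that you work directly with Theorem \ref{thm:genHD}, whereas the paper routes through the explicit solution formulas of Theorem \ref{thm:solform}, which encode the same decomposition.
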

\begin{proof}
It follows from Theorem \ref{thm:solform} with $f=\dive\varepsilon E\in H^{-1}(\Omega)$ and $g=\curlc E\in H^{-1}(\curlr)$  \begin{align*}
     E & = \gradc( \dive\varepsilon\gradc )^{-1}f+\varepsilon^{-1}\curlr (\curlrd\varepsilon^{-1}\curlr)^{-1} g +x_\varepsilon \\
      & = \varepsilon^{-1}\big(\varepsilon{\gradc}( \dive\varepsilon {\gradc} )^{-1}f+\curlr (\curlrd\varepsilon^{-1}\curlr)^{-1} g +\varepsilon x_\varepsilon \big)\\
      & = \varepsilon^{-1} H.
   \end{align*}This equality together with the uniqueness statement in the Helmholtz decomposition shows the assertion.
\end{proof}  

\begin{remark}\label{rem:wpmod} With the help of the Corollaries \ref{cor:reformpieps} and \ref{cor:Dfields} it is possible to reformulate Problems \ref{prb:classical} and \ref{prb:optional} in such a way that the data on $\cH_\varepsilon$ and $\cH^\varepsilon$ becomes independent of $\varepsilon$. 

(a) Instead of Problem \ref{prb:classical} one looks for $E\in L_2(\Omega)$ satisfying for given $f\in H^{-1}(\Omega)$ and $g\in H^{-1}(\curlr)$ and $x\in \mathcal{H}_D(\Omega)$ such that
\begin{equation}\label{eq:mod}
  \dive\varepsilon E = f,\ \curlc E = g,\ \text{and}\ \pi_D\pi_\varepsilon E = x.
\end{equation} By Corollary \ref{cor:reformpieps}, the last equation is equivalent to 
\[
   \pi_\varepsilon E = \pi_\varepsilon \pi_D\pi_\varepsilon E = (1-\gradc(\dive \varepsilon \gradc)^{-1}\dive \varepsilon) x\eqqcolon x_\varepsilon.
\]Hence, \eqref{eq:mod} is equivalent to Problem \eqref{prb:class}.

(b) Instead of Problem \ref{prb:optional} one looks for $H\in L_2(\Omega)$ satisfying for given $f\in H^{-1}(\Omega)$ and $g\in H^{-1}(\curlr)$ and $x\in \mathcal{H}_D(\Omega)$ such that
\begin{equation}\label{eq:mod2}
  \dive H = f,\ \curlc\varepsilon^{-1} H = g,\ \text{and}\ \pi_D\pi_\varepsilon \varepsilon^{-1} H =  x.
\end{equation} 
We again reformulate $\pi_D\pi_\varepsilon \varepsilon^{-1} H =  x$. We obtain, by Corollary \ref{cor:reformpieps}, that
\[
 \pi_\varepsilon \varepsilon^{-1} H =   \pi_\varepsilon \pi_D\pi_\varepsilon \varepsilon^{-1} H = (1-\gradc(\dive \varepsilon\gradc)^{-1}\dive\varepsilon)x.
\] Hence, by Corollary \ref{cor:Dfields}, 
\[
 \pi^\varepsilon  H = \varepsilon\pi_\varepsilon \varepsilon^{-1} H=\varepsilon (1-\gradc(\dive \varepsilon\gradc)^{-1}\dive\varepsilon)x\eqqcolon x^\varepsilon.
\]Thus, \eqref{eq:mod2} is equivalent to Problem \eqref{prb:opt}.
\end{remark}
In the next section we recall the core properties of nonlocal $H$-convergence viewed at from an operator-theoretic perspective.

\section{Nonlocal $H$-convergence}\label{sec:nHc}

For nonlocal $H$-convergence -- even though coming from sequences/complexes of closed operators in Hilbert spaces -- the core concept really can be put into general perspective of operators on general Hilbert spaces. One upshot of \cite{W18_NHC} is that the process of homogenisation of linear (operator) coefficients can be framed in terms of an operator topology, which is incomparable to the weak operator topology and strictly weaker than the strong operator and the norm topology. At the end of this section, we provide a proof for Theorem \ref{thm:exunitop}.

Let $\mathcal{H}$ be a Hilbert space, $\mathcal{H}_0 \subseteq \mathcal{H}$ a closed subspace and $\mathcal{H}_1\coloneqq \mathcal{H}_0^\bot$. Using the canonical embeddings $\iota_0 \colon \mathcal{H}_0 \hookrightarrow \mathcal{H}$ and $\iota_1 \colon \mathcal{H}_1 \hookrightarrow \mathcal{H}$, we define the set
\[
   \mathcal{M}(\mathcal{H}_0,\mathcal{H}_1) \coloneqq \{ a\in \cB(\cH ); a_{00}^{-1} \in \cB(\cH_0), a^{-1} \in \cB(\cH) \},
\]
where we abbreviated $a_{jk} \coloneqq \iota_j^* a \iota_k$ for all $j,k\in \{0,1\}$.
The topology for \textbf{nonlocal $H$-convergence (with respect to $(\mathcal{H}_0,\mathcal{H}_1)$)} (or \textbf{Schur topology} or \textbf{nonlocal $H$-topology}), $\tau(\mathcal{H}_0,\mathcal{H}_1)$, is the initial topology induced by the mappings
\begin{align*}
\mathcal{M}(\mathcal{H}_0,\mathcal{H}_1) \ni a & \mapsto a_{00}^{-1}\in \mathcal{B}_{\textnormal{w}}(\mathcal{H}_0) \\
\mathcal{M}(\mathcal{H}_0,\mathcal{H}_1) \ni a & \mapsto a_{00}^{-1}a_{01} \in \mathcal{B}_{\textnormal{w}}(\mathcal{H}_1,\mathcal{H}_0) \\
\mathcal{M}(\mathcal{H}_0,\mathcal{H}_1) \ni a & \mapsto a_{10}a_{00}^{-1} \in \mathcal{B}_{\textnormal{w}}(\mathcal{H}_0,\mathcal{H}_1) \\
\mathcal{M}(\mathcal{H}_0,\mathcal{H}_1) \ni a & \mapsto a_{11}-a_{10}a_{00}^{-1}a_{01} \in \mathcal{B}_{\textnormal{w}}(\mathcal{H}_1).
\end{align*}
 Here, the index $\textnormal{w}$ indicates that the weak operator topology is being used for the operator spaces mentioned.
 
 In order to get a feeling for the topology just introduced, we provide some first examples next. \begin{example}\label{exa:glh}(a) If $\mathcal{H}_0=\{0\}$, then $a_{11}=a$. In this case $\tau(\{0\},\mathcal{H})$ coincides with the weak operator topology on $\mathcal{M}(\{0\},\mathcal{H})=\mathcal{GL}(\mathcal{H})$, the set of invertible operators.

(b) If $\mathcal{H}_0 =\mathcal{H}$, then $\mathcal{H}_1=\{0\}$. In this case $\tau(\mathcal{H},\{0\})$ coincides with the weak operator topology for the inverses on the set $\mathcal{M}(\mathcal{H},\{0\})=\mathcal{GL}(\mathcal{H})$ of invertible linear operators on $\mathcal{H}$.
\end{example}

For topologically trivial $\Omega$, the Schur topology with respect to $(\ran(\gradc),\ran(\curl))$ provides a Hausdorff topology on $\cM(\ran(\gradc),\ran(\curl))$ generalising local $H$-convergence on $L_\infty(\Omega)$-valued matrices; see, in particular, \cite[Theorem 5.11]{W18_NHC}.

\begin{example}[classical nonlocal $H$-convergence]\label{exa:class} Let $\Omega\subseteq \R^3$ be bounded, range closed and assume that $\mathcal{H}_D(\Omega)=\{0\}$; that is, (see \cite{Picard1982,Pauly2021}) $\Omega$ has continuous boundary and connected complement. Then Theorem \ref{thm:genHD} for $\varepsilon=\id$ reads
\[
   L_2(\Omega)^3 = \ran(\gradc)\oplus \ran(\curl).
\]Thus, for $\mathcal{H}=L_2(\Omega)^3$, we can choose 
\[
  \mathcal{H}_0 =\ran(\gradc)\text{ and }\mathcal{H}_1=\ran(\curl).
\]
\end{example}

The aim of this section is to abstractly understand what happens if the assumption $\mathcal{H}_D(\Omega)=\{0\}$ is violated in Example \ref{exa:class}. In other terms, what is the canonical\footnote{`Canonical' in the sense that it is consistent with convergence in the local $H$-sense for multiplication operators.} nonlocal $H$-topology, if
\[
   L_2(\Omega)^3= \ran(\gradc)\oplus \ran(\curl)\oplus \mathcal{H}_D(\Omega)\neq \ran(\gradc)\oplus \ran(\curl) ?
\]
It will turn out that there is not so much ambiguity if $\mathcal{H}_D(\Omega)$ is finite-dimensional. However, if $\mathcal{H}_D(\Omega)$ is infinite-dimensional, many mutually incomparable examples can be constructed. We provide the abstract rationale for this in the next example. The construction of a range closed $\Omega$ with infinite-dimensional space of harmonic Dirichlet fields will be carried out in Section \ref{sec:infdim}, see particularly Example \ref{ex:OmegaRangeClosedID}.

\begin{example}\label{exa:manychoices} (a) Let $\cH_0,\cH_1,\cH_2$ be three Hilbert spaces; $\cH_1$ infinite-dimensional. Consider a sequence of invertible bounded linear operators $(b_n)_n$ on $\cB(\cH_1)$ with $(b_n^{-1})_n$ bounded on $\cB(\cH_1)$ such that
\[
   b_n \to b \in \mathcal{B}_{\textnormal{w}}(\cH_1)
\]
but $(b_n^{-1})_n$ is not convergent in $\mathcal{B}_{\textnormal{w}}(\cH_1)$; such sequences exist as $\cH_1$ infinite-dimensional: Use \cite[Remark 13.2.5 and Proposition 13.2.1 (c)]{Seifert2022} to find two sequences of $(f_n)_n, (g_n)_n$ of strictly positive functions in $L_\infty(\R)$ such that the associated multiplication operators $M_{f_n}$ and $M_{g_n}$ on $L_2(\R)$ satisfy $M_{f_n},M_{g_n} \to \id_{L_2(\R)}$ in the weak operator topology and $M_{f_n}^{-1}=M_{f_n^{-1}} \to \alpha\id_{L_2(\R)}$ as well as $M_{g_n}^{-1}=M_{g_n^{-1}} \to \beta\id_{L_2(\R)}$ in the weak operator topology for some scalars $\alpha\neq \beta$; the desired sequence $b_n$ is then given by $b_n \coloneqq M_{f_{n/2}}$ for $n$ even and $b_n\coloneqq M_{g_{(n-1)/2}}$ for $n$ odd.

In this setting, the operator sequence
\[
(c_n)_n\coloneqq n\mapsto    \begin{pmatrix} \id_{\cH_0} & 0 & 0 \\ 0 & b_n & 0 \\ 0 & 0 & 1_{\cH_2} \end{pmatrix} \in \mathcal{M}
\coloneqq\mathcal{M}(\cH_0,\cH_1\oplus\cH_2)
\cap \mathcal{M}(\cH_0\oplus\cH_1,\cH_2)
\]converges in $\tau_0\coloneqq\tau(\cH_0,\cH_1\oplus\cH_2)$; it does not converge in $\tau_1\coloneqq\tau(\cH_0\oplus \cH_1,\cH_2)$. On the other hand, $(c_n^{-1})_n$ is also a sequence in $\cM$, converging in $\tau_1$ and not converging in $\tau_0$. Thus, $\tau_0$ and $\tau_1$ are incomparable topologies.

(b) Let $\cH$ be a separable, infinite-dimensional Hilbert space; $\cH_0$, $\cH_2$ Hilbert spaces. Let $(\phi_n)_{n\in \N}$ be an orthonormal basis of $\cH$ and let $(N_n)_{n\geq 1}$ be a sequence of pairwise disjoint, infinite subsets of $\N$ such that $\bigcup_n N_n = \N$. Defining $\check{\cH}_n \coloneqq \overline{\lin}\{\phi_k; k\in N_\ell, 1\leq \ell\leq n\}$ and $\hat{\cH}_n \coloneqq {\check{\cH}}_n^\bot\subseteq \cH$, we put
\[
  \tau_{n} \coloneqq \tau(\cH_0\oplus \check{\cH}_n, \hat{\cH}_n \oplus \cH_2).
\]
Then, by (a), for all $n,n'\in \N$ with $n\neq n'$, $\tau_n$ and $\tau_{n'}$ are incomparable.
\end{example}

For finite-dimensional $\mathcal{H}_D(\Omega)$, the, in fact, infinite amount of possible pairwise incomparable topologies does not exist, if we restrict the set of topologies slightly in that we only consider appropriately bounded sequences/nets. This restriction, however, is irrelevant for applications. Before we go into details, we need to mention     another structural property of nonlocal $H$-convergence. For this, note that the Examples \ref{exa:glh} (a) and (b) suggest a symmetry in inversion and simultaneous change of $\cH_0$ and its orthogonal complement. This symmetry holds in the following general sense.

\begin{lemma}\label{lem:inverse} Let $\cH$ be a Hilbert space and $\cH_0\subseteq \cH$ be a closed subspace. Then
\[
    (\cM(\cH_0,\cH_1),\tau(\cH_0,\cH_1) )\ni a \mapsto a^{-1} \in  (\cM(\cH_1,\cH_0),\tau(\cH_1,\cH_0) )
\]
is a homeomorphism. More precisely, we have for all $a\in \cB(\cH)$
\[
   a\in \cM(\cH_0,\cH_1) \iff a^{-1} \in \cM(\cH_1,\cH_0).
\]
In either case, the following equalities hold
\begin{align*}
   [ a^{-1}]_{11}^{-1} & = a_{11}-a_{10}a_{00}^{-1}a_{01} & [a^{-1}]_{11}^{-1}[a^{-1}]_{10} & = -a_{10}a_{00}^{-1} \\
  [a^{-1}]_{01} [a^{-1}]_{11}^{-1} & = -a_{10}a_{00}^{-1} & 
     [ a^{-1}]_{00}-  [a^{-1}]_{01} [a^{-1}]_{11}^{-1} [a^{-1}]_{10} &= a_{00}^{-1}.   
\end{align*}
\end{lemma}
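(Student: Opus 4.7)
The plan is to let Lemma \ref{lem:Schur} do essentially all of the work: it supplies the full $2\times 2$ block matrix of $a^{-1}$ with respect to the decomposition $\cH_0\oplus\cH_1$, and every assertion below should read off from that matrix after some simple bookkeeping between the two decompositions $\cH_0\oplus\cH_1$ and $\cH_1\oplus\cH_0$. For the set-level equivalence $a\in\cM(\cH_0,\cH_1)\iff a^{-1}\in\cM(\cH_1,\cH_0)$, I start with $a\in\cM(\cH_0,\cH_1)$: Lemma \ref{lem:Schur} gives that $a_S\coloneqq a_{11}-a_{10}a_{00}^{-1}a_{01}$ is invertible and that the bottom-right block $(a^{-1})_{[1,1]}=\iota_1^* a^{-1}\iota_1$ equals $a_S^{-1}$, hence is invertible. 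But this very block is, by definition, the $00$-block of $a^{-1}$ viewed as an element of $\cM(\cH_1,\cH_0)$, so $a^{-1}\in\cM(\cH_1,\cH_0)$. The reverse implication then follows by swapping the roles of $\cH_0$ and $\cH_1$ and applying the forward implication to $a^{-1}$.

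Next I would verify the four displayed identities by substituting the explicit block formula for $a^{-1}$ and letting the cancellations run. For instance, $[a^{-1}]_{11}=a_S^{-1}$ gives $[a^{-1}]_{11}^{-1}=a_S=a_{11}-a_{10}a_{00}^{-1}a_{01}$ by definition; the product $[a^{-1}]_{11}^{-1}[a^{-1}]_{10}=a_S\bigl(-a_S^{-1}a_{10}a_{00}^{-1}\bigr)=-a_{10}a_{00}^{-1}$ telescopes in a single step; and the final Sherman--Morrison--type identity collapses to $a_{00}^{-1}$ after one expansion upon using the algebraic fact $a_S+a_{10}a_{00}^{-1}a_{01}=a_{11}$. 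Each of these is a pure algebraic verification; no analysis is required beyond the invertibilities already secured.

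For the homeomorphism claim, my plan is to check that each of the four generating maps of $\tau(\cH_1,\cH_0)$ -- the maps $b\mapsto b_{00}^{-1}$, $b\mapsto b_{00}^{-1}b_{01}$, $b\mapsto b_{10}b_{00}^{-1}$, $b\mapsto b_{S}$, now interpreted in the $\cH_1\oplus\cH_0$ indexing -- when pre-composed with $a\mapsto a^{-1}$ becomes, via the block formula of step one, the map $a\mapsto a_S$, $a\mapsto -a_{10}a_{00}^{-1}$, $a\mapsto -a_{00}^{-1}a_{01}$, and $a\mapsto a_{00}^{-1}$, respectively. These are (up to a sign, which is WOT-continuous) precisely the four generators of $\tau(\cH_0,\cH_1)$ with source and target spaces interchanged, so inversion is continuous into $\tau(\cH_1,\cH_0)$; continuity of the inverse direction follows simultaneously by the same symmetry. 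I expect the only real delicate point in the whole argument to be notational -- keeping straight whether an index "$0$" refers to the original $\cH_0$ or to the new "first component" $\cH_1$ -- and once that bookkeeping is fixed, the proof reduces to a sequence of routine verifications.
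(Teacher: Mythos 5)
Your proposal is correct and follows essentially the same route as the paper: read off the block form of $a^{-1}$ from Lemma \ref{lem:Schur}, observe that $[a^{-1}]_{11}=a_S^{-1}$ is the ``$00$-block'' in the $\cH_1\oplus\cH_0$ ordering, and note that the four generating maps of $\tau(\cH_1,\cH_0)$, pre-composed with inversion, reproduce (up to signs) the four generators of $\tau(\cH_0,\cH_1)$. One remark worth recording: your verification of the third identity correctly yields $[a^{-1}]_{01}[a^{-1}]_{11}^{-1}=-a_{00}^{-1}a_{01}a_S^{-1}\cdot a_S=-a_{00}^{-1}a_{01}$, whereas the displayed statement of the lemma repeats $-a_{10}a_{00}^{-1}$; the latter is evidently a typographical slip, and your computation gives the intended formula.
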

\begin{proof} We show that 
\[
   \cM(\cH_0,\cH_1)  = [\cM(\cH_1,\cH_0)]^{-1}
\]and in passing that
\begin{align*}
   [ a^{-1}]_{11}^{-1} & = a_{11}-a_{10}a_{00}^{-1}a_{01} & [a^{-1}]_{11}^{-1}[a^{-1}]_{10} & = -a_{10}a_{00}^{-1} \\
  [a^{-1}]_{01} [a^{-1}]_{11}^{-1} & = -a_{10}a_{00}^{-1} & 
     [ a^{-1}]_{00}-  [a^{-1}]_{01} [a^{-1}]_{11}^{-1} [a^{-1}]_{10} &= a_{00}^{-1}.   
\end{align*}
These formulas together with the definition of the topology establish all the remaining claims. Now, let $a \in \cB(\cH)$ be continuously invertible. 

Firstly, let us assume that $a \in \cM(\cH_0,\cH_1)$. Then $a_{00}^{-1} \in \cB(\cH_0)$. By Lemma \ref{lem:Schur} it follows that $a_S\coloneqq a_{11}-a_{10}a_{00}^{-1}a_{01}$ is also invertible. The formula for the inverse of $a$ in Lemma \ref{lem:Schur} then shows the desired equalities as well as $a^{-1} \in \cM(\cH_1,\cH_0)$.  On the other hand, if $a^{-1} \in \cM(\cH_1,\cH_0)$, then $ [ a^{-1}]_{11}$ is invertible. As $a^{-1}$ is invertible, by Lemma \ref{lem:Schur} interchanging the roles of $a$ and $a^{-1}$ and $\cH_0$ and $\cH_1$ respectively, we obtain that $[a^{-1}]_S\coloneqq  [ a^{-1}]_{00}-  [a^{-1}]_{01} [a^{-1}]_{11}^{-1} [a^{-1}]_{10}$ is invertible. Moreover, we infer $[a^{-1}]_S^{-1}=a_{00}$ from the concluding formula in Lemma \ref{lem:Schur}. In particular, $a_{00}$ is invertible. Thus, $a\in \cM(\cH_0,\cH_1)$, which concludes the proof.
\end{proof}

Since $\tau(\cH_0, \cH_1)$ is not a locally convex topology, the notion of boundedness is a priori unclear. In the following it will be a desired property for the considered weakly convergent sequences to be bounded. With this in mind, we call a subset $\cA\subseteq \cM(\cH_0,\cH_1)$ \textbf{$\tau(\cH_0,\cH_1)$-bounded}, if all of the following sets 
\begin{align*}
\{a\in \cA; a_{00}^{-1}\} & \subseteq \mathcal{B}(\mathcal{H}_0) \\
\{a\in \cA; a_{00}^{-1}a_{01} \} & \subseteq \mathcal{B}(\mathcal{H}_1,\mathcal{H}_0) \\
\{ a \in \cA;  a_{10}a_{00}^{-1} \} &\subseteq \mathcal{B}(\mathcal{H}_0,\mathcal{H}_1) \\
\{ a \in \cA;  a_{11}-a_{10}a_{00}^{-1}a_{01}\} &\subseteq \mathcal{B}(\mathcal{H}_1)
\end{align*}
 are bounded in the respective norm topologies. The number
 \[
\textnormal{b}(\cA;\tau(\cH_0,\cH_1))\coloneqq \sup_{a\in \cA}     \max\{\|a_{00}^{-1}\|, \|a_{00}^{-1}a_{01}\|, \|a_{10}a_{00}^{-1} \|, \|a_{11}-a_{10}a_{00}^{-1}a_{01}\|\}
 \] 
 is called the \textbf{$\tau(\cH_0,\cH_1)$-bound of $\cA$}. We denote the set of operators in $\cM(\cH_0,\cH_1)$ with a $\tau(\cH_0,\cH_1)$-bound less than some  $\lambda\geq 0$ by $B_{\cM(\cH_0,\cH_1)}(\lambda)$. For the comparison of topologies for different decompositions of $\cH$, we need to restrict our attention to bounded subsets of $\cM(\cH_0,\cH_1)$. 
 
Along the same lines in \cite[Theorem 5.1]{W18_NHC}, one obtains a metrisability result in the separable case; see also \cite[Theorem 5.6]{PSW24}.
 
\begin{theorem}\label{thm:sep} If both $\mathcal{H}_0$ and $\mathcal{H}_1$ are separable, then for all $\lambda\geq 0$, $(B_{\cM(\cH_0,\cH_1)}(\lambda),\tau(\mathcal{H}_0,\mathcal{H}_1))$ is metrisable.
\end{theorem}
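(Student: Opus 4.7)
The plan is to embed $B_{\cM(\cH_0,\cH_1)}(\lambda)$ homeomorphically into a product of four metrisable spaces via the four maps defining $\tau(\cH_0,\cH_1)$, so that metrisability follows from the corresponding property of subspaces of countable products of metrisable spaces. The key classical input I would use is that on any norm-bounded subset of $\cB(X,Y)$ with $X,Y$ separable Hilbert spaces, the weak operator topology is metrisable. This is seen by picking countable dense sets $\{x_k\}\subseteq X$ and $\{y_\ell\}\subseteq Y$ and using the countable family of seminorms $p_{k,\ell}(a)\coloneqq |\langle y_\ell, a x_k\rangle|$, which separates points and generates the weak operator topology on norm-bounded sets; explicitly, the metric $d(a,b)\coloneqq \sum_{k,\ell} 2^{-k-\ell}\min\{1,p_{k,\ell}(a-b)\}$ induces this topology.

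Next I would observe that by the very definition of the $\tau(\cH_0,\cH_1)$-bound, the four maps
$\Phi_1(a)\coloneqq a_{00}^{-1}$, $\Phi_2(a)\coloneqq a_{00}^{-1}a_{01}$, $\Phi_3(a)\coloneqq a_{10}a_{00}^{-1}$, and $\Phi_4(a)\coloneqq a_{11}-a_{10}a_{00}^{-1}a_{01}$ send $B_{\cM(\cH_0,\cH_1)}(\lambda)$ into the closed norm ball of radius $\lambda$ in $\cB(\cH_0)$, $\cB(\cH_1,\cH_0)$, $\cB(\cH_0,\cH_1)$, and $\cB(\cH_1)$ respectively. Since both $\cH_0$ and $\cH_1$ are separable, each such target ball, endowed with the weak operator topology, is metrisable by the first step.

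Now consider the product map $\Phi\coloneqq (\Phi_1,\Phi_2,\Phi_3,\Phi_4)$ into the four-fold product of these metrisable balls. By definition of the initial topology, the trace of $\tau(\cH_0,\cH_1)$ on $B_{\cM(\cH_0,\cH_1)}(\lambda)$ is precisely the topology pulled back from the product topology along $\Phi$. Injectivity of $\Phi$ follows from Lemma \ref{lem:Schur}: from any quadruple $(b_1,b_2,b_3,b_4)$ in the image one recovers uniquely $a_{00}=b_1^{-1}$, $a_{01}=a_{00}b_2$, $a_{10}=b_3 a_{00}$ and $a_{11}=b_4+b_3 a_{00}b_2$. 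Hence $\Phi$ is a topological embedding of $B_{\cM(\cH_0,\cH_1)}(\lambda)$ into a finite (in particular, countable) product of metrisable spaces, which is itself metrisable; metrisability then descends to the image of $\Phi$ and thus to $B_{\cM(\cH_0,\cH_1)}(\lambda)$.

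I do not anticipate any substantial technical obstacle here; the argument is a compilation of standard facts about initial topologies and metrisability of the weak operator topology on bounded sets, essentially mirroring the reasoning of \cite[Theorem 5.1]{W18_NHC}. The only point needing mild care is the restriction to the $\tau$-bounded set $B_{\cM(\cH_0,\cH_1)}(\lambda)$: this is exactly what guarantees that each $\Phi_j(a)$ stays inside a norm-bounded subset of its target, where the weak operator topology becomes metrisable. Without this boundedness, one would lose control of the target spaces (on the whole of $\cB$ the weak operator topology is not first countable in infinite dimensions) and the metrisability argument would fail.
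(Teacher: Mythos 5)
Your argument is correct and follows the same route that the paper intends by its reference to \cite[Theorem 5.1]{W18_NHC}: embed the bounded set via the four defining maps into a finite product of norm-bounded operator balls, each metrisable for the weak operator topology by separability, and use injectivity (recovering $a_{00}$, $a_{01}$, $a_{10}$, $a_{11}$ from the Schur data) to identify $\tau(\cH_0,\cH_1)$ on $B_{\cM(\cH_0,\cH_1)}(\lambda)$ with a subspace topology of that metrisable product. Nothing is missing; the remark about why boundedness is essential is exactly the right caveat.
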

 
 To keep the exposition rather lean and to avoid too many technical assumptions in the respective propositions, we introduce $\tau_{\textnormal{b}}(\cH_0,\cH_1)$, the \textbf{ topology of bounded nonlocal $H$-convergence}, which is the inductive topology on $\cM(\cH_0,\cH_1)$ in order that for all $\lambda>0$ the mappings
 \[
     \big( B_{\cM(\cH_0,\cH_1)}(\lambda), \tau(\cH_0,\cH_1)\cap B_{\cM(\cH_0,\cH_1)}(\lambda)\big) \hookrightarrow \cM(\cH_0,\cH_1)
 \]are continuous with the understanding that $\tau(\cH_0,\cH_1)\cap B_{\cM(\cH_0,\cH_1)}(\lambda)$ denotes the relative topology on $B_{\cM(\cH_0,\cH_1)}(\lambda)$ induced by $\tau(\cH_0,\cH_1)$. 
 
 \begin{remark}\label{rem:inverse} A similar statement to the one for the inversion of operators in Lemma \ref{lem:inverse} is also true for $\tau_\textnormal{b}$. In fact we have
  $B_{\cM(\cH_0,\cH_1)}(\lambda)=B_{[\cM(\cH_0,\cH_1)]^{-1}}(\lambda)\coloneqq B_{\cM(\cH_1,\cH_0)}(\lambda)$ for all $\lambda\geq 0$. In consequence, 
\[
   (\cM(\cH_0,\cH_1),\tau_{\textnormal{b}}(\cH_0,\cH_1) )\ni a \mapsto a^{-1} \in  (\cM(\cH_1,\cH_0),\tau_{\textnormal{b}}(\cH_1,\cH_0) )
\] is a homeomorphism.
 Indeed, this statement is a straightforward consequence of the formulas presented in Lemma \ref{lem:inverse}.
 \end{remark}

\begin{proposition}\label{prop:charbd} Let $Y$ be a topological space and $g\colon \cM(\cH_0,\cH_1)\to Y$. Then the following conditions are equivalent:
\begin{enumerate}
\item[(i)] $g$ is continuous;
\item[(ii)] for all $\lambda\geq 0$, $g_\lambda \colon \big( B_{\cM(\cH_0,\cH_1)}(\lambda), \tau(\cH_0,\cH_1)\cap B_{\cM(\cH_0,\cH_1)}(\lambda)\big) \to Y, a\mapsto g(a)$ is continuous;
\item[(iii)] for all $\tau(\cH_0,\cH_1)$-bounded convergent nets $(a^{(\iota)})_\iota$  to some $a\in \cM(\cH_0,\cH_1)$, one has $g(a^{(\iota)})\stackrel{\iota}{\to} g(a)$.
\end{enumerate}
If, in addition, $\cH$ is separable, then the following condition is also equivalent to (i), (ii) and (iii):
\begin{enumerate}
\item[(iv)] for all $\tau(\cH_0,\cH_1)$-convergent sequences $(a^{(n)})_n$  to some $a\in \cM(\cH_0,\cH_1)$, $g(a^{(n)})\to g(a)$ as $n\to\infty$.
\end{enumerate}
\end{proposition}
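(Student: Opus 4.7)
My plan is to use the universal property of the inductive topology for (i) $\iff$ (ii), then reduce (iii) and (iv) to net and sequential characterisations of continuity on the bounded pieces. The equivalence (i) $\iff$ (ii) is essentially a restatement of the defining property of $\tau_{\mathrm{b}}(\cH_0,\cH_1)$ as the final topology with respect to the family of inclusions $\iota_\lambda \colon (B_{\cM(\cH_0,\cH_1)}(\lambda),\tau(\cH_0,\cH_1)\cap B_{\cM(\cH_0,\cH_1)}(\lambda)) \hookrightarrow \cM(\cH_0,\cH_1)$: a map out of a space carrying such a topology is continuous iff $g\circ\iota_\lambda = g_\lambda$ is continuous for every $\lambda\geq 0$.

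For (ii) $\iff$ (iii) I would invoke the net characterisation of continuity on each $B_{\cM(\cH_0,\cH_1)}(\lambda)$. The one technical check is that a $\tau(\cH_0,\cH_1)$-bounded convergent net stays inside some $B_{\cM(\cH_0,\cH_1)}(\lambda)$ together with its limit; this follows from weak lower semicontinuity of the operator norm applied separately to each of the four maps $a\mapsto a_{00}^{-1}$, $a\mapsto a_{00}^{-1}a_{01}$, $a\mapsto a_{10}a_{00}^{-1}$, $a\mapsto a_{11}-a_{10}a_{00}^{-1}a_{01}$ that define $\tau(\cH_0,\cH_1)$. Given this, (ii) $\Rightarrow$ (iii) is immediate: take $\lambda$ as the bound of the net and apply continuity of $g_\lambda$ at the limit. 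For the converse, fix $\lambda$ and a net in $B_{\cM(\cH_0,\cH_1)}(\lambda)$ converging in the relative topology to some $a \in B_{\cM(\cH_0,\cH_1)}(\lambda)$; this net is automatically $\tau(\cH_0,\cH_1)$-bounded by $\lambda$, so (iii) delivers convergence of $g$-images and hence continuity of $g_\lambda$.

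In the separable case I combine two ingredients: Theorem \ref{thm:sep}, which renders $(B_{\cM(\cH_0,\cH_1)}(\lambda),\tau(\cH_0,\cH_1)\cap B_{\cM(\cH_0,\cH_1)}(\lambda))$ metrisable so that continuity and sequential continuity coincide there, and the uniform boundedness principle applied in the weak operator topology to the four defining maps, which forces every $\tau(\cH_0,\cH_1)$-convergent sequence to be $\tau(\cH_0,\cH_1)$-bounded. Together, these reduce (iv) to the sequential version of the characterisation in (iii) on some suitable $B_{\cM(\cH_0,\cH_1)}(\lambda)$, which by metrisability is equivalent to (ii). The hard part of the whole argument is essentially the limit-stays-bounded bookkeeping in the net argument and the observation that Banach--Steinhaus upgrades pointwise-weak convergence of inverses and Schur complements to norm boundedness; both are technically routine once unpacked, so I do not anticipate a genuine obstacle beyond careful indexing.
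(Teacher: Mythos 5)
Your proposal is correct and follows essentially the same route as the paper: (i) $\iff$ (ii) from the universal property of the inductive (final) topology, (ii) $\iff$ (iii) by the net characterisation of continuity on each bounded piece together with the observation that the limit of a $\tau(\cH_0,\cH_1)$-bounded convergent net remains in the same ball (which you make explicit via WOT lower semicontinuity of the operator norm), and in the separable case the metrisability from Theorem \ref{thm:sep} combined with the uniform boundedness principle to reduce (iv) to (ii).
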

\begin{proof}
The conditions (i) and (ii) being equivalent follow from the general fact for final topologies, see e.g.~\cite[Theorem 10.1]{Voigt2020} in the context of topological vectors spaces. (ii) and (iii) are a reformulation of one another using the fact that also $a \in B_{\cM(\cH_0,\cH_1)}(\lambda)$ if $a_\iota\in B_{\cM(\cH_0,\cH_1)}(\lambda)$ for all $\iota$ and some $\lambda\geq0$.

If $\cH$ is separable, then $\big( B_{\cM(\cH_0,\cH_1)}(\lambda), \tau(\cH_0,\cH_1)\cap B_{\cM(\cH_0,\cH_1)}(\lambda)\big)$ is metrisable Theorem \ref{thm:sep}. Hence, continuity of $g_\lambda$ is characterised by sequential continuity; thus (iv) implies (ii). If, on the other hand, (ii) holds then for any $\tau(\cH_0,\cH_1)$-convergent sequence $(a^{(n)})_n$ sequence with limit $a$ we find $\lambda\geq 0$ such that $a^{(n)},a\in  B_{\cM(\cH_0,\cH_1)}(\lambda)$ employing the uniform boundedness principle. Thus, (iii) holds due to continuity of $g_\lambda$.
\end{proof}

\begin{theorem}\label{thm:finite} Let $\mathcal{H}$ be a Hilbert space and $\mathcal{H}_0\subseteq \mathcal{K}_0 \subseteq \mathcal{H}$ be closed subspaces. Assume that $\mathcal{K}_0 \ominus \mathcal{H}_0$ is finite-dimensional. Then $\tau_{\textnormal{b}}(\mathcal{H}_0,\cH_0^\bot)=\tau_{\textnormal{b}}(\mathcal{K}_0,\cK_0^\bot)$ on $\cM(\cH_0,\cH_0^\bot)\cap\cM(\cK_0,\cK_0^\bot)$.
\end{theorem}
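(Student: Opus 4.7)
The plan is to set up a three-fold orthogonal decomposition $\mathcal{H} = \mathcal{H}_0 \oplus \mathcal{F} \oplus \mathcal{K}_1$, where $\mathcal{F} \coloneqq \mathcal{K}_0 \ominus \mathcal{H}_0$ is finite-dimensional and $\mathcal{K}_1 \coloneqq \mathcal{K}_0^\bot$ (so that $\mathcal{H}_1 \coloneqq \mathcal{H}_0^\bot = \mathcal{F}\oplus\mathcal{K}_1$), and to write any $a \in \cB(\cH)$ as a $3\times 3$ block matrix $(A_{ij})_{i,j \in \{0,F,1\}}$. The two $2\times 2$ block decompositions associated with $\cH_0\oplus\cH_1$ and $\cK_0\oplus\cK_1$ arise by regrouping the same nine blocks; this will let me translate between the two Schur topologies through a single intermediate ingredient, namely the finite-dimensional block $T_{FF} \coloneqq A_{FF} - A_{F0} A_{00}^{-1} A_{0F}$, which is the $FF$-component of the global Schur complement $T = a_{11} - a_{10} a_{00}^{-1} a_{01}$ and is therefore a continuous function of $a$ with respect to $\tau(\cH_0,\cH_1)$.

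The first step is to apply Lemma \ref{lem:Schur} to $a'_{00}$ with its internal $\cH_0\oplus\mathcal{F}$ decomposition: this yields explicit formulas for $(a'_{00})^{-1}$ and, after multiplying out, for $(a'_{00})^{-1}a'_{01}$ and $a'_{10}(a'_{00})^{-1}$, all built from (components of) the four generators of $\tau(\cH_0,\cH_1)$ together with $T_{FF}^{-1}$. A direct iterated Schur-complement computation then yields the further identity $T' \coloneqq a'_{11}-a'_{10}(a'_{00})^{-1}a'_{01} = T_{11} - T_{1F} T_{FF}^{-1} T_{F1}$, where $T_{11}, T_{F1}, T_{1F}$ are the remaining blocks of $T$. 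Thus each of the four generators of $\tau(\cK_0,\cK_1)$ is obtained as a composition of bounded linear maps built from (i) the generators of $\tau(\cH_0,\cH_1)$ (suitably restricted to or projected onto $\mathcal{F}$-components) and (ii) the single extra quantity $T_{FF}^{-1}$.

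The crucial observation is that $T_{FF}$ acts on the finite-dimensional space $\mathcal{F}$, so on $\cB(\mathcal{F})$ the weak operator topology coincides with the operator-norm topology. If $(a^{(\iota)})_\iota$ is a $\tau(\cH_0,\cH_1)$-bounded net in $\cM(\cH_0,\cH_1)\cap\cM(\cK_0,\cK_1)$ converging to $a$, then $T_{FF}^{(\iota)}\to T_{FF}$ in norm; since $a\in\cM(\cK_0,\cK_1)$ forces $a'_{00}$ to be invertible, Lemma \ref{lem:Schur} forces $T_{FF}$ to be invertible as well, whence $(T_{FF}^{(\iota)})^{-1}\to T_{FF}^{-1}$ in norm with eventually uniformly bounded norms. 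Plugging this into the formulas from the previous paragraph and using the composition rules that (a) norm-times-WOT stays WOT and (b) WOT into a finite-dimensional target upgrades to SOT, each of the four $\tau(\cK_0,\cK_1)$-generators converges in the weak operator topology and admits an eventual uniform norm bound. By Proposition \ref{prop:charbd} this shows the identity map to be continuous from $\tau_\textnormal{b}(\cH_0,\cH_1)$ into $\tau_\textnormal{b}(\cK_0,\cK_1)$ on the intersection.

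The reverse inclusion I obtain from the inversion duality of Remark \ref{rem:inverse}: inversion is a $\tau_\textnormal{b}$-homeomorphism swapping the distinguished subspace with its orthogonal complement, and $\cK_1\subseteq\cH_1$ still has $\cH_1\ominus\cK_1=\mathcal{F}$ finite-dimensional. Applying the above argument to the inverses $(a^{(\iota)})^{-1}$ with $\cH_1,\cK_1$ playing the roles of $\cH_0,\cK_0$ gives the opposite containment. The main technical obstacle is the iterated Schur-complement identity $T'=T_{11}-T_{1F}T_{FF}^{-1}T_{F1}$ together with the careful WOT/SOT bookkeeping in each resulting product; the passage from WOT to norm convergence is available only because $\dim\mathcal{F}<\infty$ exactly where $T_{FF}^{-1}$ (or a factor into $\mathcal{F}$) appears.
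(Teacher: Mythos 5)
Your proof is correct and takes essentially the same route as the paper's: the same three-fold decomposition $\cH_0\oplus\mathcal{F}\oplus\cK_0^\bot$, the same application of Lemma~\ref{lem:Schur} to express $(a'_{00})^{-1}$ and hence the $\tau(\cK_0,\cK_1)$-generators in terms of the $\tau(\cH_0,\cH_1)$-generators and $T_{FF}^{-1}$, the same finite-dimensional upgrade from weak operator to norm convergence (and hence continuity of inversion), the same boundedness check via Proposition~\ref{prop:charbd}, and the same appeal to the inversion duality of Lemma~\ref{lem:inverse}/Remark~\ref{rem:inverse} for the reverse containment. Your iterated Schur identity $T'=T_{11}-T_{1F}T_{FF}^{-1}T_{F1}$ is exactly the final displayed expression in the paper's computation, where the same norm/WOT bookkeeping is packaged instead through the interposed compact $\id_{\mathcal{F}}$ and Lemma~\ref{lem:prodweak}.
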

\begin{proof} We put $\cH_1\coloneqq \cH_0^\bot$ and $\cK_1\coloneqq \cK_0^\bot$.
It suffices to show that with $\cM\coloneqq \cM(\cH_0,\cH_1)\cap\cM(\cK_0,\cK_1)$ the inclusion
\begin{equation}\label{eq:ast}
(\cM,\tau_{\textnormal{b}}(\mathcal{H}_0,\cH_1)) \hookrightarrow (\cM, \tau_{\textnormal{b}}(\mathcal{K}_0,\cK_1))
\end{equation}
is continuous. Indeed, the other inclusion is then a consequence of this statement in conjunction with Remark \ref{rem:inverse} and Lemma \ref{lem:inverse}.

For the continuity of the above inclusion, we denote $\cL_0\coloneqq \cH_0, \mathcal{L}_1\coloneqq \mathcal{K}_0\ominus\mathcal{H}_0$ and $\cL_2 \coloneqq \cK_0^{\bot}$. Then $\cH=\cL_0\oplus\cL_1\oplus \cL_2$. For every $a\in \cB(\cH)$ we use block operator notation and suggestive indices to write $a$ as an operator in $\cB(\cL_0\oplus\cL_1\oplus \cL_2)$:
\[
  \cB(\cH)\ni  a\sim \begin{pmatrix} a_{00}& a_{01}& a_{02} \\ a_{10}& a_{11}& a_{12} \\ a_{20}& a_{21}& a_{22} \end{pmatrix} \in \cB(\cL_0\oplus\cL_1\oplus \cL_2).
\]With this identification, $\tau(\mathcal{H}_0,\mathcal{H}_1)$ is induced by the mappings
\begin{align*}
 a &\mapsto a_{00}^{-1} \\
 a & \mapsto \begin{pmatrix} a_{10} \\ a_{20} \end{pmatrix}a_{00}^{-1} = \begin{pmatrix} a_{10}a_{00}^{-1} \\ a_{20}a_{00}^{-1} \end{pmatrix} \\
  a & \mapsto a_{00}^{-1}\begin{pmatrix} a_{01} & a_{02} \end{pmatrix} =
\begin{pmatrix}   a_{00}^{-1}a_{01} &   a_{00}^{-1}a_{02} \end{pmatrix}
  \\
    a & \mapsto \begin{pmatrix} a_{11} & a_{12} \\ a_{21} & a_{22} \end{pmatrix} - \begin{pmatrix} a_{10} \\ a_{20} \end{pmatrix} a_{00}^{-1}\begin{pmatrix} a_{01} & a_{02} \end{pmatrix} = \begin{pmatrix}a_{11} - a_{10}a_{00}^{-1}a_{01} & a_{12}-a_{10}a_{00}^{-1}a_{02}\\ a_{21} - a_{20}a_{00}^{-1}a_{01} & a_{22} - a_{20}a_{00}^{-1}a_{02} \end{pmatrix}.
\end{align*}
$\tau(\mathcal{K}_0,\mathcal{K}_1)$ is induced by 
\begin{align*}
 a &\mapsto \begin{pmatrix} a_{00} & a_{01} \\ a_{01} & a_{11} \end{pmatrix}^{-1} \\
 a & \mapsto  \begin{pmatrix} a_{20} & a_{21} \end{pmatrix}\begin{pmatrix} a_{00} & a_{01} \\ a_{01} & a_{11} \end{pmatrix}^{-1} \\
  a & \mapsto \begin{pmatrix} a_{00} & a_{01} \\ a_{01} & a_{11} \end{pmatrix}^{-1} \begin{pmatrix} a_{02} \\ a_{12} \end{pmatrix}  \\
    a & \mapsto a_{22}-\begin{pmatrix}a_{20} & a_{21} \end{pmatrix} \begin{pmatrix} a_{00} & a_{01} \\ a_{01} & a_{11} \end{pmatrix}^{-1} \begin{pmatrix} a_{02}\\ a_{12} \end{pmatrix} 
\end{align*} Using the formula for the inverse in Lemma \ref{lem:Schur} and $a_S \coloneqq a_{11}-a_{10}a_{00}^{-1}a_{01}$ , we obtain
\begin{align*}
  &  \begin{pmatrix} a_{00} & a_{01} \\ a_{10} & a_{11} \end{pmatrix}^{-1}
 = \left(  \begin{pmatrix} 1 &0 \\   a_{10}a_{00}^{-1} & 1 \end{pmatrix} 
 \begin{pmatrix} a_{00} & 0 \\ 0 & a_{S} \end{pmatrix}
  \begin{pmatrix} 1 & a_{00}^{-1}a_{01} \\0 & 1 \end{pmatrix}
 \right)^{-1} \\
 & =    \begin{pmatrix} 1 & -a_{00}^{-1}a_{01} \\0 & 1 \end{pmatrix}
 \begin{pmatrix} a_{00}^{-1} & 0 \\ 0 & a_{S}^{-1}\end{pmatrix}
  \begin{pmatrix} 1 &0 \\   -a_{10}a_{00}^{-1} & 1 \end{pmatrix} 
 \\
& =  \begin{pmatrix} a_{00}^{-1} + a_{00}^{-1}a_{01} a_S^{-1} a_{10} a_{00}^{-1} & - a_{00}^{-1}a_{01}a_S^{-1} \\ -a_{S}^{-1}a_{10}a_{00}^{-1} & a_S^{-1}\end{pmatrix} \end{align*} and, thus, $\tau(\mathcal{K}_0,\mathcal{K}_1)$ is induced by 
\begin{align*}
 a &\mapsto \begin{pmatrix} a_{00}^{-1} + a_{00}^{-1}a_{01} a_S^{-1} a_{10} a_{00}^{-1} & - a_{00}^{-1}a_{01}a_S^{-1} \\ -a_{S}^{-1}a_{10}a_{00}^{-1} & a_S^{-1}\end{pmatrix}  \\
 & = \begin{pmatrix} a_{00}^{-1} + a_{00}^{-1}a_{01} \times \id_{1} \times a_S^{-1} \times \id_{1} \times a_{10} a_{00}^{-1} & - a_{00}^{-1}a_{01} \times \id_{1} \times a_S^{-1} \\ -a_{S}^{-1} \times \id_{1} \times a_{10}a_{00}^{-1} & a_S^{-1}\end{pmatrix}  \\ 
 a & \mapsto \begin{pmatrix} a_{20}a_{00}^{-1} + a_{20}a_{00}^{-1}a_{01} a_S^{-1} a_{10} a_{00}^{-1} + a_{21}( -a_{S}^{-1}a_{10}a_{00}^{-1})& - a_{20}a_{00}^{-1}a_{01}a_S^{-1} +a_{21} a_S^{-1}\end{pmatrix} \\
  & = \begin{pmatrix} a_{20}a_{00}^{-1} + (a_{20}a_{00}^{-1}a_{01}-a_{21}) \times \id_{1} \times a_S^{-1} \times \id_{1} \times a_{10} a_{00}^{-1}& (- a_{20}a_{00}^{-1}a_{01} +a_{21})\times \id_{1} \times a_S^{-1}\end{pmatrix} \\
  a & \mapsto \begin{pmatrix} a_{00}^{-1}a_{02} + a_{00}^{-1}a_{01} a_S^{-1} a_{10} a_{00}^{-1}a_{02}  - a_{00}^{-1}a_{01}a_S^{-1}a_{12} \\ -a_{S}^{-1}a_{10}a_{00}^{-1}a_{02} + a_S^{-1}a_{12}\end{pmatrix}   \\
  & = \begin{pmatrix} a_{00}^{-1}a_{02} + a_{00}^{-1}a_{01} \times \id_{1} \times a_S^{-1} \times \id_{1} \times ( a_{10} a_{00}^{-1}a_{02}  -a_{12}) \\ a_{S}^{-1}
  \times \id_{1} \times (-a_{10}a_{00}^{-1}a_{02} + a_{12})\end{pmatrix}   \\
    a & \mapsto a_{22}-\begin{pmatrix}a_{20} & a_{21} \end{pmatrix} \begin{pmatrix} a_{00}^{-1} + a_{00}^{-1}a_{01} a_S^{-1} a_{10} a_{00}^{-1} & - a_{00}^{-1}a_{01}a_S^{-1} \\ -a_{S}^{-1}a_{10}a_{00}^{-1} & a_S^{-1}\end{pmatrix}\begin{pmatrix} a_{02}\\ a_{12} \end{pmatrix} \\
            & = (a_{22}-a_{20}a_{00}^{-1}a_{02}) - a_{20}a_{00}^{-1}a_{01} \times \id_{1} \times a_S^{-1} \times \id_{1} \times ( a_{10} a_{00}^{-1}a_{02}  -a_{12}) \\ & \quad \quad- a_{21}a_{S}^{-1}
  \times \id_{1} \times (-a_{10}a_{00}^{-1}a_{02} + a_{12}) \\
       & = (a_{22}-a_{20}a_{00}^{-1}a_{02}) + (a_{21}- a_{20}a_{00}^{-1}a_{01}) \times \id_{1} \times a_S^{-1} \times \id_{1} \times ( a_{10} a_{00}^{-1}a_{02}  -a_{12}),
\end{align*}
where $\id_1$ denotes the identity on $\cL_1$. Since $\cL_1$ is finite-dimensional, $\id_1$ is a compact operator. Moreover, in finite dimensions, the weak operator topology coincides with the norm topology. Thus, inversion is a continuous process for invertible operators on $\cL_1$ in the weak operator topology. In order to show continuity of \eqref{eq:ast}, it suffices to use the net characterisation in Proposition \ref{prop:charbd} (iii), in combination with the result on weakly converging products in Lemma \ref{lem:prodweak} below. Note that the above expressions of the mappings inducing the topology $\tau(\cK_0,\cK_1)$ are written as sums and products of mappings inducing the topology on $\tau(\cH_0,\cH_1)$ and the compact operator $\id_1$. This eventually shows the assertion. 
\end{proof}


\begin{lemma}\label{lem:prodweak} Let $n\in\N$, $\mathcal{H}_0, \ldots, \cH_{n+1}$ Hilbert spaces, $(A_\iota^{(\ell)})_{\iota}$, $\ell\in\{0,\ldots,n\}$ bounded nets in $\cB(\cH_{\ell},\cH_{\ell+1})$ respectively converging in the weak operator topology to some $A^{(\ell)}\in \cB(\cH_{\ell},\cH_{\ell+1})$. For $\ell\in \{1,\ldots, n\}$ let $T_\ell \in \cB(\cH_\ell)$ be compact. Then
\[
      A_\iota^{(n)}T_{n} A_\iota^{(n-1)} T_{n-1} \cdots   A_\iota^{(1)} T_1 A_\iota^{(0)} \to 
       A^{(n)}T_{n} A^{(n-1)} T_{n-1} \cdots   A^{(1)} T_1 A^{(0)}
\]in the weak operator topology of $\cB(\cH_{0},\cH_{n+1})$.
\end{lemma}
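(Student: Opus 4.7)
The plan is to show, for every fixed $x\in\cH_0$ and $y\in\cH_{n+1}$, that $\langle y, A_\iota^{(n)}T_n A_\iota^{(n-1)}\cdots T_1 A_\iota^{(0)} x\rangle \to \langle y, A^{(n)}T_n A^{(n-1)}\cdots T_1 A^{(0)} x\rangle$. The engine will be the classical fact that a compact operator maps weakly convergent nets to norm convergent ones, combined with the (trivial) fact that WOT convergence tested against a fixed element of the domain yields weak convergence in the target. I would not use induction over the whole product; instead I would propagate norm convergence through the ``inner'' part of the product (where each $A_\iota^{(\ell-1)}$ is immediately followed by a compact $T_\ell$) and handle the outermost $A_\iota^{(n)}$ separately at the very end.

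Set $u_\iota^{(0)}\coloneqq x$, $u^{(0)}\coloneqq x$ and recursively
\[
u_\iota^{(\ell)} \coloneqq T_\ell A_\iota^{(\ell-1)} u_\iota^{(\ell-1)}, \qquad u^{(\ell)} \coloneqq T_\ell A^{(\ell-1)} u^{(\ell-1)} \qquad (\ell=1,\ldots,n).
\]
The central claim is that $u_\iota^{(\ell)} \to u^{(\ell)}$ in norm for every $\ell\in\{0,\ldots,n\}$. This is proved by induction on $\ell$. The base case $\ell=0$ is trivial. For the step, write
\[
u_\iota^{(\ell)} - u^{(\ell)} = T_\ell A_\iota^{(\ell-1)}\bigl(u_\iota^{(\ell-1)} - u^{(\ell-1)}\bigr) + T_\ell\bigl(A_\iota^{(\ell-1)} - A^{(\ell-1)}\bigr) u^{(\ell-1)}.
\]
The first summand vanishes in norm because $T_\ell$ is bounded, $\sup_\iota \|A_\iota^{(\ell-1)}\| < \infty$, and $u_\iota^{(\ell-1)}-u^{(\ell-1)}\to 0$ in norm by the induction hypothesis. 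For the second summand, WOT convergence $A_\iota^{(\ell-1)} \to A^{(\ell-1)}$ implies the weak convergence $(A_\iota^{(\ell-1)} - A^{(\ell-1)}) u^{(\ell-1)} \rightharpoonup 0$ in $\cH_\ell$, and then compactness of $T_\ell$ upgrades this to norm convergence. Thus $u_\iota^{(\ell)} \to u^{(\ell)}$ in norm as claimed.

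With the inner convergence in hand, decompose
\[
\bigl\langle y, A_\iota^{(n)} u_\iota^{(n)}\bigr\rangle = \bigl\langle y, A_\iota^{(n)} \bigl(u_\iota^{(n)} - u^{(n)}\bigr)\bigr\rangle + \bigl\langle y, A_\iota^{(n)} u^{(n)}\bigr\rangle.
\]
The first term is bounded in modulus by $\|y\|\,\sup_\iota\|A_\iota^{(n)}\|\,\|u_\iota^{(n)} - u^{(n)}\|$, which tends to $0$ by the previous step and the boundedness hypothesis on $(A_\iota^{(n)})_\iota$. The second term converges to $\langle y, A^{(n)} u^{(n)}\rangle$ directly by WOT convergence of $A_\iota^{(n)}$ to $A^{(n)}$ against the fixed vector $u^{(n)}$. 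Summing these contributions yields the desired WOT convergence of the entire product. The only real content is the ``weak-to-norm'' upgrade afforded by each $T_\ell$; everything else is a triangle-inequality/boundedness bookkeeping, and the role of boundedness of the nets is to absorb the norm-small errors at each telescoping step.
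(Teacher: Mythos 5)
Your proof is correct and rests on the same engine as the paper's: an induction that repeatedly invokes the fact that a compact operator upgrades weak convergence of a bounded net to norm convergence. The only difference is cosmetic and occurs at the final step, where the paper transfers $A_\iota^{(n)}$ onto the test vector $y$ via adjoints (using that $a\mapsto a^*$ is WOT-continuous), whereas you split $\langle y, A_\iota^{(n)} u_\iota^{(n)}\rangle$ into a boundedness-absorbed error term and a direct WOT term; both close the argument equally well.
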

\begin{proof}
  We prove the claim by induction on $n$. For $n=0$, there is nothing to show. For the inductive step, we recall that compact operators map weakly convergent sequences to norm convergent sequences. Thus, assuming the claim is true for some $n-1$, we let $x \in \cH_0$ and $y\in \cH_{n+1}$. Then, by the induction hypothesis, $z_\iota\coloneqq A_\iota^{(n-1)} T_{n-1} \cdots   A_\iota^{(1)} T_1 A_\iota^{(0)} x$ weakly converges to $z\coloneqq A^{(n-1)} T_{n-1} \cdots   A^{(1)} T_1 A^{(0)} x$ as $\iota\to\infty$. The aim is to show that $A_\iota^{(n)} T_n z_\iota$ weakly converges to $A^{(n)} T_n z$ as $\iota\to\infty$. As $T_n$ is compact and $(z_\iota)_\iota$ is bounded, we obtain $T_n z_\iota \to T_n z$ in norm, as $\iota\to\infty$. Since computing the adjoint is continuous in the weak operator topology, we infer for $\iota$:
  \[
     \langle A_\iota^{(n)} T_n z_\iota,y\rangle_{\cH_{n+1}} =
          \langle  T_n z_\iota,(A_\iota^{(n)})^*y\rangle_{\cH_{n}} \to 
                    \langle  T_n z,(A^{(n)})^*y\rangle_{\cH_{n}} =
                                        \langle  A^{(n)} T_n z,y\rangle_{\cH_{n+1}}
  \]as $\iota \to\infty$. This completes the proof.
\end{proof}

\begin{proof}[Proof of Theorem \ref{thm:exunitop}] At first note that for all $\Omega\subseteq \R^3$ open and range closed and $0<\alpha<\beta$, the set $\cM(\alpha,\beta)$ is a $\tau(\cH,\cH^\bot)$-bounded subset for all closed subspaces $\cH\subseteq L_2(\Omega)^3$. 

Next, we prove statement (a). Let $\Omega\subseteq \R^3$ open and range closed and let $\cH_0\subseteq L_2(\Omega)^3$ be a closed subspace with the properties
\[
   \ran(\gradc)\subseteq \cH_0\text{ and }\ran(\curl)\subseteq \cH_0^\bot.
\]
Then, any $\cH_0$ chosen in such a way defines an element in $\mathfrak{T}$. Note that $\tau_\Omega$ is uniquely determined, if $\cH_D(\Omega)=\{0\}$ since then necessarily $\cH_0=\ran(\gradc)$ and $\tau_\Omega$ is the classical nonlocal $H$-convergence topology on the $\tau_\Omega$-bounded set $\cM(\alpha,\beta)$, see also Example \ref{exa:class}; proving (a).

(b) The uniqueness statement follows from Theorem \ref{thm:finite}. Indeed, let $\cH_0$ be as above. Then $\cH_0\ominus \ran(\gradc)$ is finite-dimensional since $\cH_0^\bot \supseteq \ran(\curl)$ and $\cH_D(\Omega)$ is finite-dimensional. Thus, by Theorem \ref{thm:finite}, we obtain 
\[
   \tau_{\textnormal{b}}(\ran(\gradc),\ran(\gradc)^\bot)=   \tau_{\textnormal{b}}(\cH_0,\cH_0^\bot),
\]which yields the assertion in conjunction with the observation from the beginning of this proof that $\cM(\alpha,\beta)$ is both $\tau(\ran(\gradc),\ran(\gradc)^\bot)$-bounded and $\tau(\cH_0,\cH_0^\bot)$-bounded.
\end{proof}

\section{Infinite-dimensional harmonic Dirichlet fields}\label{sec:infdim}

In this section we aim to construct a range closed, open $\Omega\subseteq \R^3$ such that $\mathcal{H}_D(\Omega)$ is infinite-dimensional. This will then provide a particular example proving that there exists infinitely many pairwise incomparable topologies all providing in some sense an extension of the classical nonlocal $H$-convergence topology:
\begin{example}\label{exa:omeginfi} (a) Let $\Omega\subseteq \R^3$ be open and range closed with $\mathcal{H}_D(\Omega)$ infinite-dimensional (to be constructed below). Consider the topologies $(\tau_{k})_k$ in Example \ref{exa:manychoices} (b) with $\ran(\gradc)=\cH_0$, $\ran(\curl)=\cH_2$ and $\mathcal{H}_D(\Omega)=\cH$. Let $(a_n)_n$ be an admissible sequence in $\cB(L_2(\Omega)^3)$. Assume all $a_n$ have the following common block structure
\[
    \begin{pmatrix}
      * & 0 & * \\ 0 & * & 0 \\ * & 0 & * 
    \end{pmatrix} \in \cB(\ran(\gradc)\oplus \mathcal{H}_D(\Omega) \oplus \ran(\curl)),
\] where $*$ symbolises potential non-zero entries.  Let $f\in H^{-1}(\Omega)$ and $g\in H^{-1}(\curlr)$ and $u_n \in H_0^1(\Omega)$ and $v_n\in H^1(\curlr)$ satisfying
\[
    -\dive a_n \gradc u_n = f,\quad \curlc a_n^{-1} \curl v_n =g.
\]Now, let $a_n\to a$ in $\tau_k$ for some admissible $a$ and $k\geq 1$. Then using Theorem \ref{thm:TW-Mana} it is not difficult to see that $u_n, a_n \gradc u_n , v_n, a_n^{-1} \curlc v_n$ all weakly converge to the respective weak limits $u, a \gradc u , v, a_n^{-1} \curlc v$ with $u,v$ satisfying
\[
  -\dive a \gradc u = f,\quad \curlc a^{-1} \curl v =g.
\]However, by Example \ref{exa:manychoices} (b) there are choices for $(a_n)_n$, where $a$ is different for different $k$. 
%
\end{example}

We shall now provide an abstract detour to construct the $\Omega$ needed in Example \ref{exa:omeginfi}. 
Throughout, let $\cH_0,\cH_1,\cH_2$ be Hilbert spaces. For a densely defined, closed linear operator
\[
   A\colon \dom(A)\subseteq \cH_0\to \cH_1,
\]
we define
\[
    \gamma(A)\coloneqq \inf_{\phi\in \dom(A)\cap\ker(A)^\bot}\frac{\|A\phi\|_{\cH_1}}{\|\phi\|_{\cH_0}},
\]
the \textbf{minimum modulus of $A$}.
The number $\gamma(A)$ is decisive when it comes to decide whether or not the range of $A$ is closed.

\begin{theorem}[{{\cite[Theorem IV.1.6]{Goldberg2006}}}]\label{thm:gara} Let $A\colon \dom(A)\subseteq \cH_0\to \cH_1$ be a closed linear operator. Then the following conditions are equivalent:
\begin{enumerate}
\item[(i)] $\ran(A)\subseteq \cH_1$ closed;
\item[(ii)] $\gamma(A)>0$.
\end{enumerate}
\end{theorem}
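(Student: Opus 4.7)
The plan is to reduce both implications to the restriction of $A$ to the orthogonal complement of its kernel. Set $\mathcal{V} \coloneqq \dom(A)\cap \ker(A)^\bot$ and consider $A_r \coloneqq A|_{\mathcal{V}}\colon \mathcal{V}\subseteq \ker(A)^\bot \to \cH_1$. Since $A$ is closed, $\ker(A)$ and hence $\ker(A)^\bot$ are closed; a routine check shows that $A_r$ is then a closed, injective linear operator with $\ran(A_r)=\ran(A)$. In particular, $\gamma(A)$ is exactly the reciprocal of the best constant in the bound $\|x\|\le c\|A_r x\|$ for $x\in \mathcal{V}$.

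For the implication (ii) $\Rightarrow$ (i), I would take $(y_n)$ in $\ran(A)$ with $y_n\to y$ in $\cH_1$ and, using surjectivity of $A_r$ onto $\ran(A)$, pick $x_n \in \mathcal{V}$ with $A x_n = y_n$. The hypothesis $\gamma(A)>0$ turns the Cauchy property of $(y_n)$ into the Cauchy property of $(x_n)$ via
\[
   \|x_n-x_m\| \le \gamma(A)^{-1}\|A_r(x_n-x_m)\| = \gamma(A)^{-1}\|y_n-y_m\|.
\]
Thus $x_n\to x$ for some $x\in \ker(A)^\bot$, and closedness of $A$ gives $x\in \dom(A)$ with $Ax=y$, so $y\in \ran(A)$.

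For the implication (i) $\Rightarrow$ (ii), suppose $\ran(A)\subseteq \cH_1$ is closed and hence a Hilbert space in its own right. Since $A_r$ is closed and injective, its inverse $A_r^{-1}\colon \ran(A) \to \ker(A)^\bot$ is also closed, and it is defined on \emph{all} of $\ran(A)$ because $A_r$ surjects onto its range by construction. The closed graph theorem then yields $A_r^{-1}\in \cB(\ran(A),\ker(A)^\bot)$, so with $c\coloneqq \|A_r^{-1}\|$ we obtain $\|x\| \le c\|Ax\|$ for every $x\in \mathcal{V}$, i.e.\ $\gamma(A)\ge 1/c>0$.

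The argument is essentially a repackaging of the open mapping/closed graph theorem applied to $A_r$; the only delicate point is to verify that $A_r$ is genuinely closed as an operator from the Hilbert space $\ker(A)^\bot$ into $\cH_1$. This follows because $\ker(A)^\bot$ is a closed subspace of $\cH_0$, so convergence in $\ker(A)^\bot$ is just convergence in $\cH_0$, and membership of a limit point in $\ker(A)^\bot$ is automatic by closedness of that subspace. With this observation the rest is immediate.
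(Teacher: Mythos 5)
The paper does not prove this theorem itself---it is quoted directly from Goldberg's book---so there is no "paper proof" to compare against. Your argument is correct and is essentially the standard textbook proof: reduce to the injective restriction $A_r$ on $\ker(A)^\bot$, observe $\ran(A_r)=\ran(A)$, use closedness of $A$ plus the Cauchy estimate coming from $\gamma(A)>0$ for one direction, and the closed graph theorem applied to $A_r^{-1}$ (defined on all of the closed subspace $\ran(A)$) for the converse. The verification that $A_r$ is closed as an operator from $\ker(A)^\bot$ into $\cH_1$ is the right point to spell out, and you handled it correctly by using that $\ker(A)^\bot$ is a closed subspace so that $\cH_0$-limits of sequences in it stay in it. The only cosmetic remark: your concluding inequality $\gamma(A)\ge 1/c$ silently assumes $c=\|A_r^{-1}\|>0$; if $c=0$ then $\mathcal{V}=\{0\}$, $\gamma(A)$ is an infimum over the empty set (hence $+\infty$ by convention), and the equivalence holds trivially. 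It is worth a half-sentence to dispatch that degenerate case, but it does not affect the soundness of the argument.
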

The next observation provides an estimate for the minimum modulus if $A$ is given as an orthogonal sum of many operators.
\begin{proposition}\label{prop:osum}
For $n\in \N$, let $A^{(n)} \colon \dom(A^{(n)})\subseteq \cH_0 \to \cH_{1}$ be densely defined, closed and linear. We define
\[
   \mathcal{A}\coloneqq \bigoplus_{n\in \N} A^{(n)} \colon \dom(   \mathcal{A})\subseteq \ell_2(\N;\cH_0)\to \ell_2(\N;\cH_{1}),\quad (\phi_n)_n\mapsto (A^{(n)}\phi_n)_n
\]
where 
\[
   \dom(   \mathcal{A}) = \{ (\phi_n)_n \in \ell_2(\N;\cH_0); \phi_n \in \dom(A^{(n)})\text{ and }(A^{(n)}\phi_n)_n \in \ell_2(\N;\cH_{1})\}.
\]
Then $\mathcal{A}$ is densely defined, closed, linear operator with $\mathcal{A}^*= \bigoplus_{n\in \N} (A^{(n)})^*$ and
\[
   \ker(\mathcal{A})= \bigoplus_{n\in \N} \ker(A^{(n)}).
\]
Moreover, if $\gamma(A^{(n)})>0$, then
\[
    \inf_{n\in \N} \gamma(A^{(n)})\leq \gamma(\mathcal{A}).
\]
\end{proposition}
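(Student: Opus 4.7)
The plan is to verify all four assertions by coordinate-wise reductions, using that the orthogonal sum structure allows isolating a single index by testing with sequences supported on one coordinate.

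First, I would prove the algebraic/topological properties of $\mathcal A$. For dense-ness, given $(\phi_n)_n \in \ell_2(\N;\cH_0)$ and $\varepsilon > 0$, I truncate at some $N$ so that the tail has norm less than $\varepsilon/2$, and then use density of each $\dom(A^{(n)})$ to approximate $\phi_0,\ldots,\phi_N$ by elements in the respective domains; the approximating sequence lies in $\dom(\mathcal A)$ because only finitely many coordinates are nonzero. For closedness, if $(\phi^{(k)})_k \to \phi$ in $\ell_2(\N;\cH_0)$ and $(\mathcal A \phi^{(k)})_k \to \eta$ in $\ell_2(\N;\cH_1)$, then coordinate-wise convergence gives $\phi^{(k)}_n \to \phi_n$ and $A^{(n)}\phi^{(k)}_n \to \eta_n$, and closedness of $A^{(n)}$ yields $\phi_n \in \dom(A^{(n)})$ with $A^{(n)}\phi_n = \eta_n$; since $\eta \in \ell_2$, we conclude $\phi \in \dom(\mathcal A)$ and $\mathcal A \phi = \eta$.

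Second, for the adjoint formula, I note that the inclusion $\bigoplus_n (A^{(n)})^* \subseteq \mathcal A^*$ is immediate from the defining pairing. For the reverse, let $\psi \in \dom(\mathcal A^*)$. Given any $\phi_n \in \dom(A^{(n)})$, the vector $\phi = (0,\ldots,0,\phi_n,0,\ldots)$ lies in $\dom(\mathcal A)$, and the identity $\langle A^{(n)}\phi_n, \psi_n\rangle = \langle \mathcal A \phi, \psi\rangle = \langle \phi, \mathcal A^* \psi\rangle = \langle \phi_n, (\mathcal A^*\psi)_n\rangle$ forces $\psi_n \in \dom((A^{(n)})^*)$ with $(A^{(n)})^*\psi_n = (\mathcal A^*\psi)_n$. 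Since $\mathcal A^*\psi \in \ell_2(\N;\cH_0)$, this identifies $\psi$ as an element of $\dom(\bigoplus_n (A^{(n)})^*)$. The kernel formula is immediate from the definition of $\mathcal A$.

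Third, for the minimum modulus bound, the key auxiliary identity is
\[
\ker(\mathcal A)^\bot = \bigoplus_{n\in\N} \ker(A^{(n)})^\bot,
\]
which follows from the kernel formula by testing orthogonality against vectors supported on one index. Given $\phi = (\phi_n)_n \in \dom(\mathcal A) \cap \ker(\mathcal A)^\bot$, each $\phi_n$ lies in $\dom(A^{(n)}) \cap \ker(A^{(n)})^\bot$, so by definition of $\gamma(A^{(n)})$ we have $\|A^{(n)}\phi_n\|_{\cH_1} \ge \gamma(A^{(n)}) \|\phi_n\|_{\cH_0} \ge \bigl(\inf_k \gamma(A^{(k)})\bigr) \|\phi_n\|_{\cH_0}$. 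Squaring and summing over $n$ gives
\[
\|\mathcal A \phi\|_{\ell_2(\N;\cH_1)}^2 \ge \Bigl(\inf_k \gamma(A^{(k)})\Bigr)^2 \|\phi\|_{\ell_2(\N;\cH_0)}^2,
\]
which yields the desired inequality $\inf_n \gamma(A^{(n)}) \le \gamma(\mathcal A)$.

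The only step that requires a moment of care is the adjoint computation, specifically verifying that the sequence $((A^{(n)})^*\psi_n)_n$ is actually in $\ell_2(\N;\cH_0)$ — but this is handed to us for free because we read off its coordinates from $\mathcal A^*\psi$, which is in $\ell_2$ by assumption. Everything else is a routine coordinate-wise verification.
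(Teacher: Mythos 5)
Your proof is correct and follows the same coordinate-wise approach as the paper, which only proves the final minimum-modulus inequality and declares the rest elementary. Your direct use of $\|A^{(n)}\phi_n\|_{\cH_1} \geq \gamma(A^{(n)})\|\phi_n\|_{\cH_0}$ (valid trivially when $\phi_n = 0$) is in fact a bit cleaner than the paper's detour through an auxiliary $\alpha>1$, and you additionally spell out the density, closedness, adjoint and kernel verifications that the paper leaves to the reader.
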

\begin{proof}
Most of the statements being elementary and following from corresponding statements for (operator-valued, abstract) multiplication operators, we only prove the last inequality. Let $\phi=(\phi_n)_n$ in $\dom(\mathcal{A})\cap \ker(\mathcal{A})^\bot$. By the formula for the kernel of $\mathcal{A}$ we obtain, for all $n\in \N$, $\phi_n\in \dom(A^{(n)})\cap \ker(A^{(n)})^\bot$. Let $\alpha>1$. Then, for $n\in \N$,
\[
    \frac{\gamma(A^{(n)})}{\alpha} \leq \frac{\|A^{(n)}\phi_n\|_{\cH_1}}{\|\phi_n\|_{\cH_0}}; \text{ and, thus, }    \big(\frac{\gamma(A^{(n)})}{\alpha}\big)^2\|\phi_n\|_{\cH_0}^2 \leq \|A^{(n)}\phi_n\|_{\cH_1}^2.
\]Hence, summing over $n\in \N$, we infer
\[
\frac{1}{\alpha^2} \inf_{n\in \N} \gamma(A^{(n)})^2\|\phi\|^2_{\ell_2(\N;\cH_0)}\leq \|\mathcal{A}\phi\|_{\ell_2(\N;\cH_1)}^2.
\]
Therefore, 
\[
   \frac{1}{\alpha} \inf_{n\in \N} \gamma(A^{(n)}) \leq \gamma(\mathcal{A}).
\]Letting $\alpha\to 1$, we deduce the asserted inequality.
\end{proof}
\begin{theorem}\label{thm:osumscomplex} For $n\in \N$, $j\in \{0,1\}$, let $A_j^{(n)} \colon \dom(A_j^{(n)})\subseteq \cH_j \to \cH_{j+1}$ be densely defined, closed linear operators satisfying $\ran(A_0^{(n)})\subseteq \ker(A_1^{(n)})$.
Then for $\mathcal{A}_j \coloneqq \bigoplus_{n\in\N}A_j^{(n)}$, $j\in \{0,1\}$ we have $\ran(\cA_0)\subseteq \ker(\cA_1)$ and 
\[
   \ker(\mathcal{A}_1)\cap \ker(\mathcal{A}_0^*) = \bigoplus_{n\in \N} \ker(A_1^{(n)})\cap \ker ((A_0^{(n)})^*)\subseteq \ell_2(\N;\cH_1).
\]
Moreover, if for $j\in \{0,1\}$
\[
    \inf_{n\in \N} \gamma(A_j^{(n)})>0,
\]then $\ran(\cA_0), \ran(\cA_1)$ are closed.
\end{theorem}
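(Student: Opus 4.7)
The plan is to reduce everything to the componentwise statements already encoded in Proposition \ref{prop:osum} together with the closed range characterisation via the minimum modulus in Theorem \ref{thm:gara}.

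First I would deal with $\ran(\cA_0)\subseteq \ker(\cA_1)$. Given $\phi=(\phi_n)_n\in \dom(\cA_0)$ we have $\phi_n\in \dom(A_0^{(n)})$ with $(A_0^{(n)}\phi_n)_n\in \ell_2(\N;\cH_1)$. By assumption $A_0^{(n)}\phi_n\in \ker(A_1^{(n)})\subseteq \dom(A_1^{(n)})$ and $A_1^{(n)}A_0^{(n)}\phi_n=0$, so $(A_0^{(n)}\phi_n)_n\in \dom(\cA_1)$ with $\cA_1\cA_0\phi=0$; thus $\cA_0\phi\in\ker(\cA_1)$.

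Next, I would address the formula for $\ker(\cA_1)\cap \ker(\cA_0^*)$. By Proposition \ref{prop:osum} applied to $\cA_1$, we have $\ker(\cA_1)=\bigoplus_n \ker(A_1^{(n)})$, and the same proposition yields $\cA_0^*=\bigoplus_n (A_0^{(n)})^*$, so again by its kernel formula $\ker(\cA_0^*)=\bigoplus_n \ker((A_0^{(n)})^*)$. For $\psi=(\psi_n)_n\in \ell_2(\N;\cH_1)$, membership in the intersection is equivalent to $\psi_n\in\ker(A_1^{(n)})\cap \ker((A_0^{(n)})^*)$ for every $n$, which is exactly the componentwise description of $\bigoplus_n \ker(A_1^{(n)})\cap \ker((A_0^{(n)})^*)$.

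Finally, for the closed range statement, I would invoke the inequality in the last part of Proposition \ref{prop:osum}: the hypothesis $\inf_n \gamma(A_j^{(n)})>0$ yields $\gamma(\cA_j)\geq \inf_n \gamma(A_j^{(n)})>0$ for $j\in\{0,1\}$, and then Theorem \ref{thm:gara} immediately gives closedness of $\ran(\cA_j)$. I do not expect any genuine obstacle here: the entire theorem is essentially an orchestration of the componentwise identities in Proposition \ref{prop:osum}, the only mild care being that one checks the summability conditions entering $\dom(\cA_j)$ along the way, which is automatic since $\ker(A_1^{(n)})$ is a vector space closed under the $\ell_2$-direct sum and the kernel decompositions do not involve any summability constraint beyond being in $\ell_2(\N;\cH_j)$.
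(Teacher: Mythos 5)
Your proposal is correct and follows exactly the route the paper takes: the theorem is reduced to the componentwise identities (kernel, adjoint, minimum-modulus inequality) from Proposition \ref{prop:osum}, with Theorem \ref{thm:gara} supplying closedness of the ranges. The paper's proof is only a terse pointer to those two results; your write-up just makes the orchestration explicit.
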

\begin{proof}
The statements are almost direct consequences of Proposition \ref{prop:osum}. The closedness of the ranges follows from Theorem \ref{thm:gara} in conjunction with the inequality asserted in Proposition \ref{prop:osum}.\end{proof}

We are now in the position to provide an example for the desired range closed domain $\Omega$ with $\cH_D(\Omega)$ infinite-dimensional.
\begin{example}\label{ex:OmegaRangeClosedID} Let $\Omega\subseteq \R^3$ be open, bounded and range closed with $\mathcal{H}_D(\Omega)\neq \{0\}$\footnote{As discussed in Figure \ref{fig:hDN}, an annular region is a possible choice.}. Then by Theorem \ref{thm:osumscomplex}, $\N$ identical copies of the operators $\gradc$ and $\curlc$ yield 2 operators $\mathcal{A}_0$, $\mathcal{A}_1$ with closed range and infinite-dimensional $ \ker(\mathcal{A}_1)\cap \ker(\mathcal{A}_0^*)$. We note that this orthogonal sum, by translation invariance of the derivative (and the locality of the operators involved), can be realised as a single set $\hat\Omega\subseteq \R^3$, which is a grid of copies of $\Omega$ that are arranged in such a way they do not intersect; see also Figure \ref{fig:infdeg}.
\begin{figure}
\begin{minipage}{\textwidth}
\centering
 \tdplotsetmaincoords{0}{0} 
\begin{tikzpicture}[tdplot_main_coords, scale=0.5]
  \def\sphereRadius{0.6}
  \def\gridSeparation{2.5}

  \foreach \x in {-6,...,6} {
    \foreach \y in {-2,...,2} {
      \pgfmathsetmacro{\shiftX}{\x*\gridSeparation}
      \pgfmathsetmacro{\shiftY}{\y*\gridSeparation}
      
      \shade[ball color=black!20, opacity=1] (\shiftX, \shiftY, 0) circle (1);
      
      \begin{scope}
        \clip (\shiftX, \shiftY) circle (\sphereRadius);
        \shade[ball color=white] (\shiftX, \shiftY, 0) circle (\sphereRadius);
      \end{scope}
    }
  }
\end{tikzpicture}
\end{minipage}
\caption{Part of a range closed topologically nontrivial domain $\Omega$ with infinite-dimensional space of harmonic Dirichlet fields; each individual annular region is arranged on an infinite discrete grid.}\label{fig:infdeg}
\end{figure}
\end{example}

\section{Homogenisation theorems}\label{sec:homel}

In this section, we combine the results concerning well-posedness of electrostatic equations and on the topology of nonlocal $H$-convergence in the case where it can be guaranteed that $\mathcal{H}_D(\Omega)$ is finite-dimensional, only. Here, we let $\Omega\subseteq \R^3$ open. Furthermore, we shall assume in most of the statements in this section that $\Omega$ satisfies the \textbf{electric compactness property}; that is, 
\[
    \dom(\dive)\cap \dom(\curlc)\hookrightarrow \hookrightarrow L_2(\Omega)^3.
\]By a contradiction argument, we infer that $\ran(\curlc)$ and -- by Proposition \ref{prop:crt} -- $\ran(\curl)$ are closed; see Proposition \ref{prop:compclos} and Example \ref{ex:rangeclosed}(b) for the details. Furthermore, it follows that $\mathcal{H}_D(\Omega)=\ker(\dive)\cap \ker(\curlc)$ is finite-dimensional. Moreover, we remark here that any $\Omega$ satisfying the electric compactness property is necessarily bounded. Hence, by Poincar\'e's inequality,  $\ran(\gradc)$ is closed. Consequently, $\Omega$ is range closed. We remark that the Maxwell compactness property is stronger than the electric compactness property. Thus, by the Picard--Weber--Weck selection theorem, for instance bounded weak Lipschitz domains $\Omega$ satisfy the electric compactness property.

Next, we provide an explicit reformulation of Theorem \ref{thm:finite}. As short hand, we will use 
\begin{equation}\label{eq:shtb}
   \tau_{\textnormal{bH}}\coloneqq \tau_{\textnormal{b}}(\ran(\mathring{\grad}),\ran(\mathring{\grad})^{\bot}),
\end{equation}
where we introduced the notion of bounded nonlocal $H$-convergence $\tau_{\textnormal{b}}$ just after Theorem \ref{thm:sep}. Quickly recall that $\varepsilon \in \cB(L_2(\Omega)^3)$ is called admissible, if
\begin{enumerate}
 \item[{(a1)}] $\varepsilon$ is invertible,\vspace*{0.1cm}
 \item[{(a2)}] $\iota^*\varepsilon\iota$ is invertible, where $\iota\colon \ran(\gradc)\hookrightarrow L_2(\Omega)^3$, and\vspace*{0.1cm}
  \item[{(a3)}] $\kappa^*\varepsilon^{-1}\kappa$ is invertible, where $\kappa\colon \ran(\curl)\hookrightarrow L_2(\Omega)^3$.\vspace*{0.1cm}
\end{enumerate}
\begin{theorem}\label{thm:charnonlb} Let $\Omega\subseteq \R^3$ open satisfying the electric compactness property. Then 
\[
 \tau_{\textnormal{bH}} =
\tau_{\textnormal{b}}(\ran(\curl)^\bot,\ran(\curl))
\]
on admissible operators in $\cB(L_2(\Omega)^3)$.
In particular, for admissible sequences $(\varepsilon_n)_n$ and an admissible $\varepsilon$ the following conditions are equivalent:
\begin{enumerate}
\item[(i)] for all $f\in H^{-1}(\Omega)$ and $u_n\in H_0^1(\Omega)$ satisfying
\[
    -\dive\varepsilon_n\gradc u_n= f,
\]
we obtain $u_n\rightharpoonup u$ in $H_0^1(\Omega)$ and $\varepsilon_{n,00}^{-1}\varepsilon_{n,01}$, $\varepsilon_{n,10}\varepsilon_{n,00}^{-1}$, and $\varepsilon_{n,11}-\varepsilon_{n,10}\varepsilon_{n,00}^{-1}\varepsilon_{n,01}$ respectively converging in the weak operator topology to $\varepsilon_{00}^{-1}\varepsilon_{01}$, $\varepsilon_{10}\varepsilon_{00}^{-1}$, and $\varepsilon_{11}-\varepsilon_{10}\varepsilon_{00}^{-1}\varepsilon_{01}$, where we used the decomposition $L_2(\Omega)^3=\ran(\gradc)\oplus \ker(\dive)$.
\item[(ii)]for all $g\in H^{-1}(\curlr)$ and $v_n\in \dom(\curlr)$ satisfying
\[
  \curlrd\varepsilon_n^{-1}\curlr v_n= g,
\]
we obtain $v_n\rightharpoonup v$ in $H^1(\curlr)$ and $\varepsilon_{n,00}^{-1}$, $\varepsilon_{n,00}^{-1}\varepsilon_{n,01}$, and $\varepsilon_{n,10}\varepsilon_{n,00}^{-1}$, respectively converging in the weak operator topology to $\varepsilon_{00}^{-1}$, $\varepsilon_{00}^{-1}\varepsilon_{01}$, and $\varepsilon_{10}\varepsilon_{00}^{-1}$, where we used the decomposition $L_2(\Omega)^3= \ker(\curlc)\oplus \ran(\curl)$.
\end{enumerate}
\end{theorem}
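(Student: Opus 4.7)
The plan is to deduce the topology equality from Theorem \ref{thm:finite} and then translate each of conditions (i) and (ii) into convergence in the appropriate Schur topology.

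First I would set $\cH_0 \coloneqq \ran(\gradc)$ and $\cK_0 \coloneqq \ran(\curl)^\perp$. From $\curlc\gradc = 0$ one has $\ran(\gradc) \perp \ran(\curl)$, so $\cH_0 \subseteq \cK_0$ with orthogonal difference
\[
\cK_0 \ominus \cH_0 = \ker(\dive) \cap \ker(\curlc) = \cH_D(\Omega),
\]
which is finite-dimensional under the electric compactness property (by the usual contradiction argument via the compact embedding). Before invoking Theorem \ref{thm:finite}, one verifies that every admissible $\varepsilon$ lies in both $\cM(\cH_0, \cH_0^\perp)$ and $\cM(\cK_0, \cK_0^\perp)$: conditions (a1) and (a2) yield the first membership; for the second, note that (a3) says $\kappa^*\varepsilon^{-1}\kappa = (\varepsilon^{-1})_{11}$ (with $(1,1)$-block taken in the $\cK$-decomposition) is invertible, so $\varepsilon^{-1} \in \cM(\ran(\curl), \cK_0)$, whence Lemma \ref{lem:inverse} gives $\varepsilon \in \cM(\cK_0, \ran(\curl))$. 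Theorem \ref{thm:finite} then yields $\tau_{\textnormal{bH}} = \tau_{\textnormal{b}}(\cK_0, \cK_0^\perp)$ on the intersection of admissible operators, proving the first claim.

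For the equivalence of (i) and (ii) the idea is that each condition lists three of the four canonical generators of the corresponding Schur topology, with the fourth generator hidden in the weak convergence of the solution. For (i), three generators of $\tau(\cH_0, \cH_0^\perp)$ are explicit; the missing one, $\varepsilon_{n,00}^{-1}$, is extracted from $u_n \rightharpoonup u$ via Theorem \ref{thm:TW-Mana}: the solution formula reads $u_n = \gradc^{-1} \varepsilon_{n,00}^{-1} (\gradc^\diamond)^{-1} f$, and since $\gradc\colon H_0^1(\Omega)\to\ran(\gradc)$ is a topological isomorphism while $\gradc^\diamond$ restricted to $\ran(\gradc)$ is a bijection onto $H^{-1}(\Omega)$, weak convergence of $u_n$ for every $f$ is equivalent to weak operator convergence of $\varepsilon_{n,00}^{-1}$ on $\cB(\cH_0)$.

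Analogously for (ii), three generators of $\tau(\cK_0, \cK_0^\perp)$ appear directly, and the missing Schur complement $\varepsilon_{n,S} \coloneqq \varepsilon_{n,11}-\varepsilon_{n,10}\varepsilon_{n,00}^{-1}\varepsilon_{n,01}$ is encoded in $v_n \rightharpoonup v$. Indeed, Theorem \ref{thm:TW-Mana} applied with $C = \curl$ and coefficient $\varepsilon_n^{-1}$ gives $v_n = \curlr^{-1}(\kappa^*\varepsilon_n^{-1}\kappa)^{-1}(\curlrd)^{-1}g$, and Lemma \ref{lem:inverse} (applied in the $\cK$-decomposition) identifies $(\kappa^*\varepsilon_n^{-1}\kappa)^{-1} = [(\varepsilon_n^{-1})_{11}]^{-1} = \varepsilon_{n,S}$, so that $v_n \rightharpoonup v$ for all $g$ corresponds exactly to weak operator convergence of $\varepsilon_{n,S}$. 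Since any $\tau$-convergent sequence is $\tau$-bounded by the uniform boundedness principle, the first part of the theorem transfers convergence between the two Schur topologies and gives (i) $\iff$ (ii). The main technical obstacle is correctly identifying the hidden generator via Theorem \ref{thm:TW-Mana} in each case and keeping track of the appropriate decomposition when applying Lemma \ref{lem:inverse} to convert between $(\varepsilon_n^{-1})_{11}^{-1}$ and the Schur complement of $\varepsilon_n$.
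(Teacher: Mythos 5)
Your proposal is correct and follows essentially the same route as the paper: both invoke Theorem~\ref{thm:finite} after identifying $\cK_0\ominus\cH_0=\cH_D(\Omega)$ as finite-dimensional under the electric compactness property, and both read (i) and (ii) as lists of the four Schur generators in the two decompositions, with one generator encoded implicitly in the weak convergence of the solution via Theorem~\ref{thm:TW-Mana} and Lemma~\ref{lem:inverse}. You simply spell out two details that the paper leaves tacit — verifying via (a1)--(a3) and Lemma~\ref{lem:inverse} that admissible operators lie in $\cM(\cH_0,\cH_0^\perp)\cap\cM(\cK_0,\cK_0^\perp)$, and the explicit identification $[(\varepsilon_n^{-1})_{11}]^{-1}=\varepsilon_{n,S}$ hiding the Schur complement in the curl solution — which is a sound elaboration, not a different proof.
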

\begin{proof}The assertion follows from Theorem \ref{thm:finite} together with the consequence of the electric compactness property in that $\mathcal{H}_{\textnormal{D}}(\Omega) = \ran(\gradc)^{\bot}\cap \ran(\curlr)^\bot = \ker(\dive)\cap \ker(\curlc)$ is finite-dimensional.
For the equivalence stated note that this is just a reformulation of the equality of the topologies.
\end{proof}

\begin{remark}\label{rem:correcttopology0}
If $\Omega$ is range closed (not necessarily having finite-dimensional $\cH_D(\Omega)$), then the convergence described in (i) of Theorem \ref{thm:charnonlb} is equivalent to convergence in $\tau_{\textnormal{bH}}$.
\end{remark}

Particularly useful for the next section is the following consequence of the previous theorem.

\begin{corollary}\label{cor:nonlquadr} Let $\Omega\subseteq \R^3$  open satisfying the electric compactness property. Assume that a sequence of admissible operators $(\varepsilon_n)_n$ converges to some admissible $\varepsilon$ in  $\tau_{\textnormal{bH}}$. Then the following conditions are satisfied:
\begin{enumerate}
\item[(a)] Let $(f_n)_n$ in $H^{-1}(\Omega)$, $f\in H^{-1}(\Omega)$. Assume for $n\in \N$ that $u_n\in H_0^1(\Omega)$ satisfies
\[
    -\dive\varepsilon_n\mathring{\grad} u_n= f_n
\]
If $f_n\to f$ in $H^{-1}(\Omega)$, we obtain $u_n\rightharpoonup u$ weakly in $H_0^1(\Omega)$, and $\varepsilon_n\gradc u_n\rightharpoonup \varepsilon \gradc u$ weakly in $L_2(\Omega)^3$.
\item[(b)] Let $(g_n)_n$ in $H^{-1}(\curlr)$, $g\in H^{-1}(\curlr)$. Assume for $n\in \N$ that $v_n\in \dom(\curl_{\rdd})$ satisfies
\[
    \curlrd\varepsilon_n^{-1}\curlr v_n= g_n.
\]
Then $v_n\rightharpoonup v$ in $H^1(\curlr)$, $\varepsilon_n^{-1}\curl v_n \rightharpoonup \varepsilon^{-1}\curl v$ in $L_2(\Omega)^3$.
\item[(c)] $\gradc(\dive\varepsilon_n\gradc)^{-1}\dive\varepsilon_n\to \gradc(\dive\varepsilon\gradc)^{-1}\dive\varepsilon$ in the weak operator topology.
\item[(d)] $\varepsilon_n(1-\gradc(\dive\varepsilon_n\gradc)^{-1}\dive\varepsilon_n)\to
\varepsilon(1-\gradc(\dive\varepsilon\gradc)^{-1}\dive\varepsilon)$ in the weak operator topology.
\end{enumerate}
\end{corollary}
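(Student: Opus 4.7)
The plan is to reduce every claim to two ingredients: the explicit solution formula from Theorem~\ref{thm:TW-Mana} and the block-matrix representation of $\varepsilon_n$ with respect to the orthogonal decomposition $L_2(\Omega)^3 = \cH_0 \oplus \cH_1$, where $\cH_0 \coloneqq \ran(\gradc)$ and $\cH_1 \coloneqq \cH_0^{\bot}$. In this representation, $\tau_{\textnormal{bH}}$-convergence means precisely that the four quantities $\varepsilon_{n,00}^{-1}$, $\varepsilon_{n,00}^{-1}\varepsilon_{n,01}$, $\varepsilon_{n,10}\varepsilon_{n,00}^{-1}$, and the Schur complement $\varepsilon_{n,11}-\varepsilon_{n,10}\varepsilon_{n,00}^{-1}\varepsilon_{n,01}$ converge in the respective weak operator topologies, with uniformly bounded $\tau_{\textnormal{bH}}$-bound.

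For~(a), applying Theorem~\ref{thm:TW-Mana}(ii) with $C=\gradc$ to the equation $-\dive\varepsilon_n\gradc u_n=f_n$ yields the explicit identity
\[
    \varepsilon_{n,00}\,\gradc u_n = -(\gradc_{\rdd}^\diamond)^{-1} f_n.
\]
Since $(\gradc_{\rdd}^\diamond)^{-1}\colon H^{-1}(\Omega) \to \cH_0$ is a continuous bijection \emph{independent} of $n$, and $f_n\to f$ in $H^{-1}(\Omega)$ by assumption, the right-hand side converges in norm in $\cH_0$. Writing $\gradc u_n = \varepsilon_{n,00}^{-1}\bigl(\varepsilon_{n,00}\,\gradc u_n\bigr)$ then expresses $\gradc u_n$ as a product of a weakly-operator-convergent factor and a norm-convergent one, so $\gradc u_n \rightharpoonup \gradc u$ in $\cH_0$ and hence $u_n \rightharpoonup u$ in $H_0^1(\Omega)$. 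The $\cH_0$-projection of $\varepsilon_n\gradc u_n$ equals $-(\gradc_{\rdd}^\diamond)^{-1}f_n$ (norm-convergent), while its $\cH_1$-projection equals $(\varepsilon_{n,10}\varepsilon_{n,00}^{-1})\cdot\bigl(-(\gradc_{\rdd}^\diamond)^{-1}f_n\bigr)$, again a weak-by-norm product and hence weakly convergent; summing gives $\varepsilon_n\gradc u_n \rightharpoonup \varepsilon\gradc u$ in $L_2(\Omega)^3$. Statement~(b) is the mirror image: by Lemma~\ref{lem:inverse} (or Remark~\ref{rem:inverse}), $\tau_{\textnormal{bH}}$-convergence of $(\varepsilon_n)_n$ translates into $\tau_{\textnormal{b}}(\ran(\curl),\ran(\curl)^\bot)$-convergence of $(\varepsilon_n^{-1})_n$, which by Theorem~\ref{thm:charnonlb} is the correct topology governing the curl problem; the argument of~(a) is then replayed with $\gradc$ replaced by $\curlr$.

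For~(c) and~(d), a direct calculation with the block representation suffices. For $E = \iota_0 E_0 + \iota_1 E_1$ one finds
\[
    \gradc(\dive\varepsilon_n\gradc)^{-1}\dive\varepsilon_n\, E = \iota_0 E_0 + \iota_0\,\varepsilon_{n,00}^{-1}\varepsilon_{n,01}\, E_1,
\]
which converges weakly in $L_2(\Omega)^3$ by the defining weak operator convergence of $\varepsilon_{n,00}^{-1}\varepsilon_{n,01}$, giving~(c). For~(d), one subtracts this from $E$ and applies $\varepsilon_n$: the $\cH_0$-component cancels by construction, leaving the $\cH_1$-component $\iota_1\bigl(\varepsilon_{n,11}-\varepsilon_{n,10}\varepsilon_{n,00}^{-1}\varepsilon_{n,01}\bigr) E_1$, which converges weakly by the Schur-complement entry of $\tau_{\textnormal{bH}}$. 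The only genuinely delicate step throughout is correctly extracting the identity $\varepsilon_{n,00}\,\gradc u_n = -(\gradc_{\rdd}^\diamond)^{-1}f_n$ from Theorem~\ref{thm:TW-Mana} and exploiting that $\gradcd$ annihilates $\cH_1 = \ker(\dive)$; the remaining verifications are routine bookkeeping.
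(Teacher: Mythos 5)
Your proof is correct and follows essentially the same route as the paper: you compute the block-matrix representations of $\gradc u_n$, $\varepsilon_n\gradc u_n$, and the two projection-type expressions in (c) and (d) with respect to $\cH_0=\ran(\gradc)$ and $\cH_1=\cH_0^\bot$, reduce everything to weak-by-norm products involving the four Schur-data maps defining $\tau_{\textnormal{bH}}$, and invoke the inversion symmetry (Lemma~\ref{lem:inverse}/Remark~\ref{rem:inverse}) together with Theorem~\ref{thm:charnonlb} to dispatch (b) by mirroring (a). The one small point where your write-up is actually cleaner than the paper's is that you carry the $n$-dependent data $f_n$ consistently through the solution formula (the paper's displayed computation momentarily drops the subscript), which is needed since the corollary allows strongly convergent right-hand sides.
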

\begin{proof} For (a) and (b), in view of Theorem \ref{thm:charnonlb}, it suffices to show convergence of the respective fluxes (Note that the weak convergence of the solutions $(u_n)_n$ and $(v_n)_n$ follow from the reformulation in Theorem \ref{thm:TW-Mana}  (b) and the assumed convergence of the operators involved in the weak operator topology). By the symmetry of the argument (see Remark \ref{rem:inverse}), it moreover suffices to consider (a). For this, note that 
\[
u_n = -(\dive\varepsilon_n\mathring{\grad})^{-1}f = -(\iota_{0}^*\mathring{\grad})^{-1}\varepsilon_{n,00}^{-1}(\dive\iota_0)^{-1}f
\]and so
\begin{align*}
   \varepsilon_n\mathring{\grad} u_n & =
   -   \varepsilon_n\iota_0\iota_0^*\mathring{\grad}(\iota_{0}^*\mathring{\grad})^{-1}\varepsilon_{n,00}^{-1}(\dive\iota_0)^{-1}f \\
   & =
   - \begin{pmatrix}\iota_0 & \iota_1 \end{pmatrix} \begin{pmatrix}\iota_0^* \\ \iota_1^* \end{pmatrix}  \varepsilon_n\begin{pmatrix}\iota_0 & \iota_1 \end{pmatrix} \begin{pmatrix}\iota_0^* \\ \iota_1^* \end{pmatrix}\iota_0\iota_0^*\mathring{\grad}(\iota_{0}^*\mathring{\grad})^{-1}\varepsilon_{n,00}^{-1}(\dive\iota_0)^{-1}f 
   \\ &  =
   - \begin{pmatrix}\iota_0 & \iota_1 \end{pmatrix} \begin{pmatrix}\varepsilon_{n,00} & \varepsilon_{n,01} \\ \varepsilon_{n,10} & \varepsilon_{n,11} \end{pmatrix}  \begin{pmatrix}\iota_0^*\iota_0\varepsilon_{n,00}^{-1}(\dive\iota_0)^{-1}f  \\ 0 \end{pmatrix} \\
    & =- \begin{pmatrix}\iota_0 & \iota_1 \end{pmatrix} \begin{pmatrix}\varepsilon_{n,00} & \varepsilon_{n,01} \\ \varepsilon_{n,10} & \varepsilon_{n,11} \end{pmatrix}  \begin{pmatrix}\iota_0^*\iota_0\varepsilon_{n,00}^{-1}(\dive\iota_0)^{-1}f  \\ 0 \end{pmatrix} = \iota_1\varepsilon_{n,10}\varepsilon_{n,00}^{-1}(\dive\iota_0)^{-1}f,
\end{align*}
so the convergence of the fluxes follows from the convergence of $\varepsilon_{n,10}\varepsilon_{n,00}^{-1}$ in the weak operator topology.

(c) We compute using the decomposition $L_2(\Omega)^3= \ran(\mathring{\grad})\oplus \ker(\dive)$,
\begin{align*}
 \mathring{\grad}(\dive\varepsilon_n\mathring{\grad})^{-1}\dive\varepsilon_n & =
\iota_0\iota_0^* \mathring{\grad} (\iota_{0}^*\mathring{\grad})^{-1}\varepsilon_{n,00}^{-1}(\dive\iota_0)^{-1}\dive\iota_{0}\iota_0^*\varepsilon_n \\
& = \iota_0\varepsilon_{n,00}^{-1}\iota_0^*\begin{pmatrix}\iota_0 & \iota_1 \end{pmatrix}  \begin{pmatrix}\varepsilon_{n,00} & \varepsilon_{n,01} \\ \varepsilon_{n,10} & \varepsilon_{n,11} \end{pmatrix} \begin{pmatrix}\iota_0^* \\ \iota_1^* \end{pmatrix}\\
& = \begin{pmatrix}\iota_0\varepsilon_{n,00}^{-1} & 0 \end{pmatrix}  \begin{pmatrix}\varepsilon_{n,00}\iota_0^*+ \varepsilon_{n,01}\iota_1^* \\ \varepsilon_{n,10}\iota_0^* +\varepsilon_{n,11}\iota_1^*  \end{pmatrix} \\
& =\iota_0\varepsilon_{n,00}^{-1}(\varepsilon_{n,00}\iota_0^*+ \varepsilon_{n,01}\iota_1^* )\\&=\iota_0\varepsilon_{n,00}^{-1}\varepsilon_{n,00}\iota_0^* +\iota_0\varepsilon_{n,00}^{-1}\varepsilon_{n,01}\iota_1^*.
\end{align*}
Hence, the statement follows from the convergence statement in (i) of Theorem \ref{thm:charnonlb}.

(d) With the same decomposition as in (c), we compute
\begin{align*}
& \varepsilon_n(1-\gradc(\dive \varepsilon_n\gradc)^{-1}\dive\varepsilon_n) \\ &= \varepsilon_n(1-\iota_0\iota_0^* +\iota_0\varepsilon_{n,00}^{-1}\varepsilon_{n,01}\iota_1^*) \\
& = \begin{pmatrix}\iota_0 & \iota_1 \end{pmatrix}  \begin{pmatrix}\varepsilon_{n,00} & \varepsilon_{n,01} \\ \varepsilon_{n,10} & \varepsilon_{n,11} \end{pmatrix} \begin{pmatrix}\iota_0^* \\ \iota_1^* \end{pmatrix}(\iota_1\iota_1^* -\iota_0\varepsilon_{n,00}^{-1}\varepsilon_{n,01}\iota_1^*) \\
& = \begin{pmatrix}\iota_0 & \iota_1 \end{pmatrix}  \begin{pmatrix}\varepsilon_{n,00} & \varepsilon_{n,01} \\ \varepsilon_{n,10} & \varepsilon_{n,11} \end{pmatrix} \begin{pmatrix}-\varepsilon_{n,00}^{-1}\varepsilon_{n,01} \\ 1 \end{pmatrix}\iota_1^* \\
& = \begin{pmatrix}\iota_0 & \iota_1 \end{pmatrix}  \begin{pmatrix}0\\ \varepsilon_{n,11}  -\varepsilon_{n,10} \varepsilon_{n,00}^{-1}\varepsilon_{n,01}  \end{pmatrix}\iota_1^* =\iota_1(\varepsilon_{n,11}  -\varepsilon_{n,10} \varepsilon_{n,00}^{-1}\varepsilon_{n,01})\iota_1^*.
\end{align*}The latter operator converges in the weak operator topology by Theorem \ref{thm:charnonlb}(i). 
\end{proof}
\begin{remark} Note that (a), (c) and (d) are true even in the case where $\mathcal{H}_D(\Omega)$ is infinite-dimensional; that is, range closed $\Omega$ is sufficient.
\end{remark}

We now turn to the discussion of the homogenisation problem for electrostatic problems as discussed in Section \ref{sec:estatic}. We particularly refer to Remark \ref{rem:wpmod} for the equivalent formulations of \eqref{prb:class} and \eqref{prb:opt} used in the following theorem.

\begin{theorem}\label{thm:homelectrostatics} Let $\Omega\subseteq\R^3$ be open satisfying the electric compactness property. Let $(\varepsilon_n)_n$ in $\cB(L_2(\Omega)^3)$ and $\varepsilon\in \cB(L_2(\Omega)^3)$. Assume $\varepsilon$ and, for all $n\in \N$, $\varepsilon_n$ are admissible. Furthermore, assume $\varepsilon_n\to \varepsilon$ in $\tau_{\textnormal{bH}}$.

Let $f\in H^{-1}(\Omega)$, $g\in H^{-1}(\curl_{\rdd})$, $x\in \mathcal{H}_{D}(\Omega)$ and $E_n\in L_2(\Omega)^3$ satisfying
\[
   \begin{cases} \dive\varepsilon_nE_n = f&\\
   \curl E_n = g &\\
   \pi_D\pi_{\varepsilon_n}E_n = x.& \end{cases}
\]
Then $E_n \to E$ and $\varepsilon_n E_n \to \varepsilon E$ weakly in $L_2(\Omega)^3$ and $E$ satisfies
\[
 \begin{cases} \dive\varepsilon E = f&\\
   \curl E  = g &\\
    \pi_D\pi_{\varepsilon}E = x.& \end{cases}
\]
\end{theorem}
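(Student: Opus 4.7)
The plan is to reduce the statement to the four weak-convergence assertions already packaged in Corollary \ref{cor:nonlquadr}. The electric compactness property gives that $\Omega$ is range closed and $\cH_D(\Omega)$ is finite-dimensional, so Theorem \ref{thm:solform}(a) applies to each admissible $\varepsilon_n$. By Remark \ref{rem:wpmod}(a) the constraint $\pi_D\pi_{\varepsilon_n}E_n=x$ with $x\in\cH_D(\Omega)$ is equivalent to $\pi_{\varepsilon_n}E_n=\bigl(1-\gradc(\dive\varepsilon_n\gradc)^{-1}\dive\varepsilon_n\bigr)x$. The solution formula of Theorem \ref{thm:solform}(a) therefore delivers the decomposition
\[
   E_n=\gradc u_n+\varepsilon_n^{-1}\curlr v_n+\bigl(1-\gradc(\dive\varepsilon_n\gradc)^{-1}\dive\varepsilon_n\bigr)x,
\]
where $u_n\in H_0^1(\Omega)$ is the unique solution of $\dive\varepsilon_n\gradc u_n=f$ and $v_n\in\dom(\curlr)$ the unique solution of $\curlrd\varepsilon_n^{-1}\curlr v_n=g$.

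Each of the three summands is then shown to converge weakly in $L_2(\Omega)^3$. Corollary \ref{cor:nonlquadr}(a), applied to the constant sequence $f_n=f$, yields $u_n\rightharpoonup u$ weakly in $H_0^1(\Omega)$ with $\dive\varepsilon\gradc u=f$, so that in particular $\gradc u_n\rightharpoonup\gradc u$ and $\varepsilon_n\gradc u_n\rightharpoonup\varepsilon\gradc u$. Corollary \ref{cor:nonlquadr}(b), applied to $g_n=g$, yields $v_n\rightharpoonup v$ in $H^1(\curlr)$ with $\curlrd\varepsilon^{-1}\curlr v=g$, so that $\curlr v_n\rightharpoonup\curlr v$ and $\varepsilon_n^{-1}\curlr v_n\rightharpoonup\varepsilon^{-1}\curlr v$. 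Finally, Corollary \ref{cor:nonlquadr}(c), evaluated at the fixed vector $x$, gives
\[
   \bigl(1-\gradc(\dive\varepsilon_n\gradc)^{-1}\dive\varepsilon_n\bigr)x\rightharpoonup \bigl(1-\gradc(\dive\varepsilon\gradc)^{-1}\dive\varepsilon\bigr)x
\]
weakly in $L_2(\Omega)^3$. Summation produces $E_n\rightharpoonup E$ with
\[
   E=\gradc u+\varepsilon^{-1}\curlr v+\bigl(1-\gradc(\dive\varepsilon\gradc)^{-1}\dive\varepsilon\bigr)x.
\]

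For the flux I would apply $\varepsilon_n$ termwise to the above decomposition. The first term converges weakly to $\varepsilon\gradc u$ by Corollary \ref{cor:nonlquadr}(a); the middle term collapses to $\curlr v_n$, which converges weakly to $\curlr v$ by the previous step; the last term converges to $\varepsilon\bigl(1-\gradc(\dive\varepsilon\gradc)^{-1}\dive\varepsilon\bigr)x$ by Corollary \ref{cor:nonlquadr}(d). Summing and factoring out $\varepsilon$ yields $\varepsilon_n E_n\rightharpoonup\varepsilon E$.

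It remains to check that $E$ solves the limit problem, which is essentially automatic from Theorem \ref{thm:solform}(a) applied to $\varepsilon$: by construction $\dive\varepsilon E=f$ and $\curlc E=g$, and the third summand of $E$ equals $\pi_\varepsilon E$, so Corollary \ref{cor:reformpieps}(b) gives $\pi_D\pi_\varepsilon E=\pi_D x=x$. The proof is essentially bookkeeping once the explicit decomposition is in hand; the real technical work, namely pushing nonlocal $H$-convergence through nontrivial Dirichlet-field contributions, has already been done in Corollary \ref{cor:nonlquadr}, which in turn relied on Theorem \ref{thm:finite} and hence on the finite-dimensionality of $\cH_D(\Omega)$ guaranteed by the electric compactness property; I do not anticipate any further obstruction.
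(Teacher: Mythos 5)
Your proof is correct and follows essentially the same route as the paper: both write down the explicit decomposition from Theorem \ref{thm:solform} with $x_{\varepsilon_n}=(1-\gradc(\dive\varepsilon_n\gradc)^{-1}\dive\varepsilon_n)x$ via Remark \ref{rem:wpmod}, and then pass to the limit term by term using parts (a)--(d) of Corollary \ref{cor:nonlquadr}. Your write-up spells out the termwise bookkeeping (including the verification that the limit satisfies the constraint on the Dirichlet field, via Corollary \ref{cor:reformpieps}) in slightly more detail than the paper's rather compressed proof, but there is no substantive difference in strategy; the only minor blemish is a sign-convention mismatch between your $\dive\varepsilon_n\gradc u_n=f$ and the form $-\dive\varepsilon_n\gradc u_n=f_n$ appearing in Corollary \ref{cor:nonlquadr}(a), which is of course harmless.
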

\begin{proof}
We use the solution formulas provided in Theorem \ref{thm:solform} with 
\begin{align*}
x_{\varepsilon_n} &= (1-\gradc(\dive \varepsilon_n \gradc)^{-1}\dive \varepsilon_n) x, \\
x^{\varepsilon_n} &= \varepsilon_n(1-\gradc(\dive \varepsilon_n \gradc)^{-1}\dive \varepsilon_n) x
\end{align*}
 (see also Remark \ref{rem:wpmod}) to obtain 
\begin{multline*}
  E_n = \gradc( \dive\varepsilon_n\gradc )^{-1}f \\+\varepsilon_n^{-1}\curlr (\curlrd\varepsilon_n^{-1}\curlr)^{-1} g +(1-\gradc(\dive \varepsilon_n \gradc)^{-1}\dive \varepsilon_n) x\end{multline*}
  and
  \begin{multline*}
 H_n=\varepsilon_n E_n = \varepsilon_n\gradc( \dive\varepsilon_n\gradc )^{-1}f \\+\curlr (\curlrd\varepsilon_n^{-1}\curlr)^{-1} g +\varepsilon_n(1-\gradc(\dive \varepsilon_n \gradc)^{-1}\dive \varepsilon_n) x.\end{multline*}
Here, using Corollary \ref{cor:nonlquadr}, we may now let $n\to\infty$ to obtain the desired result.
\end{proof}

\begin{remark}\label{rem:movingrhs} An analogous statement to Theorem \ref{thm:homelectrostatics} holds, if we replace $f$, $g$ and $x$ by (strongly) convergent sequences $(f_n)_n$, $(g_n)_n$ and $(x_n)_n$. 
\end{remark}

\section{Convergence of energy and the div-curl Lemma}\label{sec:energy}

Similarly to the previous section, we shall assume that $\Omega\subseteq \R^3$ satisfies the electric compactness property. We also recall the abbreviation 
\[
   \tau_{\textnormal{bH}}  =\tau_{\textnormal{b}}(\ran(\mathring{\grad}),\ran(\mathring{\grad})^{\bot}).
\]
At the end of this section, we provide a proof for Theorem \ref{thm:hometstatic}.

 In order to obtain convergence of the energy $\langle \varepsilon_n E_n,E_n\rangle$ for $E_n$ solving the system for electrostatics (see \cite[p 64]{E93}), we need the following concrete version of the abstract div-curl lemma from \cite[Theorem 2.6]{W17_DCL}. 
\begin{theorem}\label{thm:dcl} Let $\Omega\subseteq \R^3$ open satisfying the electric compactness property. Assume that $(r_n)_n,(q_n)_n$ are weakly convergent sequences in $L_2(\Omega)^3$ with respective limits $r$ and $q$. If both $\{\dive r_n; n\in \N\}\subseteq H^{-1}(\Omega)$ and $\{\curld q_n; n\in \N\}\subseteq H^{-1}(\curl)$ are relatively compact, then
\[
    \lim_{n\to\infty}\langle r_n,q_n\rangle_{L_2(\Omega)^3} = \langle r,q\rangle_{L_2(\Omega)^3}.
\]
\end{theorem}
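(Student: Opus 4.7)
The plan is to apply the orthogonal Helmholtz decomposition (Theorem \ref{thm:genHD} with $\varepsilon=\id$) to both sequences and reduce $\langle r_n,q_n\rangle_{L_2}$ to a sum of three decoupled terms, each of which can be handled by a strong--weak pairing argument. Concretely, I would write
\[
r_n = \gradc u_n + \curl V_n + h_n, \qquad q_n = \gradc \phi_n + \curl \psi_n + k_n,
\]
with $u_n,\phi_n \in H_0^1(\Omega)$, $V_n,\psi_n \in \dom(\curlr)$, and $h_n,k_n \in \cH_D(\Omega)$. Mutual $L_2$-orthogonality of the three summands kills six of the nine cross terms, leaving
\[
\langle r_n,q_n\rangle_{L_2} = \langle \gradc u_n,\gradc \phi_n\rangle + \langle \curl V_n,\curl \psi_n\rangle + \langle h_n,k_n\rangle,
\]
and continuity of the three orthogonal projections forces each component of $r_n$ and $q_n$ to converge weakly to the corresponding component of the limits $r$ and $q$.

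The heart of the argument is to promote the $\gradc$-component of $r_n$ and the $\curl$-component of $q_n$ to strong convergence, leaving a weak partner in each pairing. Since $\curl V_n + h_n \in \ker(\dive)$, one has $\dive r_n = \dive \gradc u_n$ in $H^{-1}(\Omega)$; and because $-\dive\gradc\colon H_0^1(\Omega)\to H^{-1}(\Omega)$ is an isomorphism (Theorem \ref{thm:TW-Mana} with $C=\gradc$, $a=\id$), relative compactness of $\{\dive r_n\}$ in $H^{-1}(\Omega)$ transfers to relative compactness of $\{u_n\}$ in $H_0^1(\Omega)$, hence $\gradc u_n \to \gradc u$ strongly in $L_2(\Omega)^3$. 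For the curl side, $\gradc \phi_n$ and $k_n$ are annihilated by $\curld$ (using $\ran(\gradc)\perp\ran(\curl)$ and $\cH_D(\Omega)\subseteq\ker(\curlc)$), so $\curld q_n$ depends only on $\psi_n$; restricting to $H^1(\curlr)\hookrightarrow H^1(\curl)$ and inverting the isomorphism $\curlrd\curlr\colon H^1(\curlr)\to H^{-1}(\curlr)$ (again Theorem \ref{thm:TW-Mana}, now with $C=\curl$) gives relative compactness of $\{\psi_n\}$ in $H^1(\curlr)$, and thus $\curl \psi_n \to \curl \psi$ strongly in $L_2(\Omega)^3$. Finally, the electric compactness property forces $\cH_D(\Omega)$ to be finite-dimensional, so the weakly convergent sequences $(h_n)_n$ and $(k_n)_n$ in this space converge strongly.

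Combining the three strong--weak pairings, and applying the same orthogonal decomposition to the weak limits $r$ and $q$, one reads off $\langle r_n,q_n\rangle\to \langle r,q\rangle$. The only real technical subtlety is the mismatch between the hypothesis (relative compactness in $H^{-1}(\curl)$) and the space $H^{-1}(\curlr)$ actually used to recover $\psi_n$: here one observes that dualising the continuous inclusion $H^1(\curlr)\hookrightarrow H^1(\curl)$ yields a continuous restriction $H^{-1}(\curl)\to H^{-1}(\curlr)$ that preserves relative compactness, so the stated hypothesis is strong enough. Beyond that, the proof is essentially bookkeeping on the Helmholtz decomposition together with the isomorphism results from Theorem \ref{thm:TW-Mana}.
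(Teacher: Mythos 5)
Your proof is correct, but it is a genuinely different route from the paper's. The paper disposes of Theorem~\ref{thm:dcl} in one line by invoking the abstract div-curl lemma \cite[Theorem 2.6]{W17_DCL} with $A_0 = \gradc$, $A_1 = \curlc$, and then merely checking that the electric compactness property delivers the two hypotheses of that theorem (closedness of $\ran(A_0)$, $\ran(A_1)$ and finite-dimensionality of $\ker(A_0^*)\cap\ker(A_1)=\cH_D(\Omega)$). You instead give a direct, self-contained argument via the orthogonal Helmholtz decomposition $L_2(\Omega)^3 = \ran(\gradc)\oplus\ran(\curl)\oplus\cH_D(\Omega)$: orthogonality kills the six cross terms, the compactness hypotheses on $\dive r_n$ and $\curld q_n$ upgrade the $\gradc$-component of $r_n$ and the $\curl$-component of $q_n$ to strong $L_2$-convergence via the isomorphisms supplied by Theorem~\ref{thm:TW-Mana} (and the restriction map $H^{-1}(\curl)\to H^{-1}(\curlr)$, a point you correctly flag and handle), and finite-dimensionality of $\cH_D(\Omega)$ turns the remaining weakly convergent component into a strongly convergent one. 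Your version essentially rederives the abstract lemma of \cite{W17_DCL} specialised to the $\gradc$--$\curlc$ complex; what this buys is transparency and independence from the external reference, at the cost of length. What the paper's route buys is brevity and explicitness about which structural hypotheses (closed range, finite cohomology) are actually being used, which is exactly what the abstract lemma isolates.
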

\begin{proof}
The claim follows from \cite[Theorem 2.6]{W17_DCL} for the special case $A_0 =\gradc$ and $A_1=\curlc$ (where we used the notation there). The assumptions in \cite[Theorem 2.6]{W17_DCL} are that $\ran(A_0)$ and $\ran(A_1)$ are closed and that $\ker(A_0^*)\cap\ker(A_1)$ is finite-dimensional. These assumptions are -- as it is argued just after the definition of the electric compactness property -- consequences of this compactness property.
\end{proof}

We state the main result of this section next.

\begin{theorem}[Convergence of the energy]\label{thm:conenergy}
Let $(\varepsilon_n)_n$ be admissible and $\tau_{\textnormal{bH}}$-convergent to some admissible $\varepsilon$. If $f_n\to f$ and $g_n\to g$ strongly in $H^{-1}(\Omega)$ and $H^{-1}(\curl_{\textnormal{red}})$, $x_n\to x$  in $\mathcal{H}_{D}(\Omega)$. Then, given $E_n$ satisfies
\[
  \dive \varepsilon_n E_n = f_n \quad \curlc E_n = g_n, \pi_{D}\pi_{\varepsilon_n}E_n =x_n
\]
we obtain that both $E_n\to E$ and $\varepsilon_nE_n\to \varepsilon E$ weakly in $L_2(\Omega)^3$, where $E$ satisfies
\[
\dive \varepsilon E = f \quad \curlc E  = g, \pi_{D}\pi_{\varepsilon} =x.
\]
Moreover,  
\[
   \langle \varepsilon_nE_n,E_n\rangle  \to    \langle \varepsilon E,E\rangle 
\]
\end{theorem}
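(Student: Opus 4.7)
The plan is to split the statement into two parts: first the weak convergence and identification of the limit, and then the energy convergence via the div--curl lemma. For the first part, the weak convergence $E_n \rightharpoonup E$ and $\varepsilon_n E_n \rightharpoonup \varepsilon E$, as well as the fact that $E$ solves the limit problem, follow immediately from Theorem \ref{thm:homelectrostatics} together with Remark \ref{rem:movingrhs} (which explicitly handles the case of strongly converging right-hand sides $f_n$, $g_n$, $x_n$). So this first part is essentially a citation.

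The main content is the energy convergence. The strategy is to apply the div--curl lemma (Theorem \ref{thm:dcl}) with the natural choice
\[
   r_n \coloneqq \varepsilon_n E_n, \qquad q_n \coloneqq E_n.
\]
Both sequences are weakly convergent in $L_2(\Omega)^3$ by the first part, with limits $\varepsilon E$ and $E$ respectively. To apply Theorem \ref{thm:dcl} one needs relative compactness of $\{\dive r_n\}$ in $H^{-1}(\Omega)$ and of $\{\curld q_n\}$ in $H^{-1}(\curl)$. But $\dive r_n = \dive \varepsilon_n E_n = f_n$ converges strongly in $H^{-1}(\Omega)$ by hypothesis, and a strongly convergent sequence is always relatively compact. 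Similarly, $\curld q_n = \curlc E_n = g_n$ converges strongly in $H^{-1}(\curlr)$, which gives relative compactness in the relevant dual space. Theorem \ref{thm:dcl} then yields
\[
 \langle \varepsilon_n E_n, E_n\rangle_{L_2(\Omega)^3} \to \langle \varepsilon E, E\rangle_{L_2(\Omega)^3},
\]
which is the desired convergence of energy.

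The step I would expect to require the most care is verifying that Theorem \ref{thm:dcl} genuinely applies with the given data: specifically, reconciling the distinction between $H^{-1}(\curl)$ (appearing in the statement of the div--curl lemma) and $H^{-1}(\curlr)$ (where $g_n$ lives). Since $\ran(\curl)$ is closed and $\curlr$ is the reduced restriction, an element of $H^{-1}(\curlr)$ induces an element of $H^{-1}(\curl)$ by pre-composition with the orthogonal projection onto $\ker(\curl)^\bot$, and this assignment preserves strong convergence. Once this identification is spelled out, the relative compactness hypothesis is satisfied and the proof reduces to a direct application of the two cited results.
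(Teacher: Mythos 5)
Your proof is correct and follows essentially the same two-step route as the paper: weak convergence of $E_n$ and $\varepsilon_n E_n$ from Theorem \ref{thm:homelectrostatics} with Remark \ref{rem:movingrhs}, and energy convergence from the div--curl lemma (Theorem \ref{thm:dcl}) applied to $r_n=\varepsilon_n E_n$, $q_n=E_n$. The extra care you take regarding the $H^{-1}(\curl)$ versus $H^{-1}(\curlr)$ identification is not made explicit in the paper but is a reasonable and correct clarification.
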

\begin{proof}
 The weak convergence of $E_n$ and $\varepsilon_nE_n$ to $E$ and $\varepsilon E$ follows from Theorem \ref{thm:homelectrostatics} in conjunction with Remark \ref{rem:movingrhs}.

The convergence of the scalar products, $\langle \varepsilon_nE_n,E_n\rangle  \to    \langle \varepsilon E,E\rangle $, follows from Theorem \ref{thm:dcl}. Indeed, for $q_n =E_n$ and $r_n=\varepsilon_n E_n$ the assumptions of Theorem \ref{thm:dcl} are satisfied proving the assertion.
\end{proof}

\begin{proof}[Proof of Theorem \ref{thm:hometstatic}] This is a direct consequence of Theorem \ref{thm:homelectrostatics} together with Theorem \ref{thm:conenergy}.
\end{proof}

\section{A compactness criterion with moving coefficients}\label{sec:compact}

In this section, we present a compactness criterion and provide a proof of Theorem \ref{thm:comprough}. For this, let $\Omega\subseteq \R^3$ be open. Consider a sequence of admissible coefficients $(\varepsilon_n)_n$ in $\cB(L_2(\Omega)^3)$. When is it true that a bounded $L_2$-sequence of vector fields $(E_n)_n$ satisfying the conditions
\[
   \dive\varepsilon_nE_n, \curlc E_n \text{ bounded in $L_2$}
\]
actually contains an $L_2$-convergent subsequence? If $\varepsilon_n=\varepsilon_0$ for some selfadjoint, strictly positive definite $\varepsilon_0$, then standard Hilbert space techniques confirm that 
\[
   \dom(\dive\varepsilon_0)\cap \dom(\curlc)\hookrightarrow \hookrightarrow L_2(\Omega)^3,
\]if $\Omega$ satisfies the electric compactness property, see, e.g.,~\cite{Pauly2021}. Thus, the compactness result hinges on $\Omega$, only. In the case of `moving' coefficients, that is, non-constant $(\varepsilon_n)_n$, some additional assumptions on the limit behaviour are needed. The corresponding result reads as follows, which by the Picard--Weber--Weck selection theorem is more general than Theorem \ref{thm:comprough} from the introduction.

\begin{theorem}[Helga's Theorem\footnote{This theorem is dedicated to my daughter Helga Mariella Ostmann. In fact, when I was carrying her through the night to make her sleep, I was wondering about general results of mine; this was the moment, when it occurred to me that a combination of nonlocal $H$-convergence and weak operator convergence implies strong convergence for the electric field.}]\label{thm:compact} Let $\Omega\subseteq \R^3$ open satisfying the electric compactness property. Let  $(\varepsilon_n)_{n\geq1}$ be bounded sequence in $\cB(L_2(\Omega)^3)$, $\varepsilon_0\in \cB(L_2(\Omega)^3)$. Assume there exists $c>0$ such that
\[
   \Re \varepsilon_n \geq c\quad (n\in \N_{\geq 1}).
\] and 
assume that for all $E\in L_2(\Omega)^3$, $(\dive \varepsilon_n E)_n$ is relatively compact in $H^{-1}(\Omega)$. 

Assume $\varepsilon_n\to \varepsilon_0$ as $n\to\infty$ in both the weak operator topology and $\tau_{\textnormal{bH}}$. 

Let $(E_n)_n$ be bounded in $L_2(\Omega)^3$ such that
\[
     ( \dive \varepsilon_n E_n)_n\text{ and } (\curlc E_n)_n
\]are bounded in $L_2(\Omega)$ and $L_2(\Omega)^3$. 

Then there exists an $L_2(\Omega)^3$-strongly convergent subsequence of $(E_n)_n$.
\end{theorem}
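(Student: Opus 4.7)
The plan is to extract a strongly $L_2$-convergent subsequence of $(E_n)_n$ via a coercivity argument in which the $\varepsilon_n$-energy is controlled through two applications of the div-curl lemma (Theorem \ref{thm:dcl}). Since $(E_n)_n$ is bounded in $L_2(\Omega)^3$ and $(\varepsilon_n)_n$ is uniformly norm-bounded, a diagonal extraction produces a subsequence along which $E_n \rightharpoonup E$ and $\varepsilon_n E_n \rightharpoonup G$ weakly in $L_2(\Omega)^3$ for some $E, G \in L_2(\Omega)^3$. Because $\Omega$ is necessarily bounded under the electric compactness property, the Rellich embedding yields that $L_2(\Omega) \hookrightarrow H^{-1}(\Omega)$ is compact.

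The first invocation of Theorem \ref{thm:dcl} takes $r_n = \varepsilon_n E_n$ and $q_n = E_n$: by hypothesis $\dive \varepsilon_n E_n$ is bounded in $L_2(\Omega)$ hence precompact in $H^{-1}(\Omega)$, and $\curld E_n = \curlc E_n$ is bounded in $L_2(\Omega)^3$, giving precompactness in the appropriate dual. This yields $\langle \varepsilon_n E_n, E_n\rangle \to \langle G, E\rangle$. The second invocation takes $r_n = \varepsilon_n E$ (for the fixed weak limit $E$) and $q_n = E_n$. Here the pointwise hypothesis that $\{\dive \varepsilon_n E\}_n$ is relatively compact in $H^{-1}(\Omega)$ enters decisively; together with $\varepsilon_n E \rightharpoonup \varepsilon_0 E$ (from weak operator convergence of $\varepsilon_n$), the lemma yields $\langle \varepsilon_n E, E_n\rangle \to \langle \varepsilon_0 E, E\rangle$.

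The uniform coercivity $\Re \varepsilon_n \geq c$ now gives
\[
  c\|E_n - E\|^2 \leq \Re \langle \varepsilon_n(E_n - E), E_n - E\rangle = \Re\bigl[\langle \varepsilon_n E_n, E_n\rangle - \langle \varepsilon_n E_n, E\rangle - \langle \varepsilon_n E, E_n\rangle + \langle \varepsilon_n E, E\rangle\bigr].
\]
The four terms on the right converge respectively to $\langle G, E\rangle$ (from the first div-curl application), $\langle G, E\rangle$ (from $\varepsilon_n E_n \rightharpoonup G$), $\langle \varepsilon_0 E, E\rangle$ (from the second div-curl application), and $\langle \varepsilon_0 E, E\rangle$ (from weak operator convergence of $\varepsilon_n$). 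They cancel in pairs, so the right-hand side tends to zero and $E_n \to E$ strongly in $L_2(\Omega)^3$.

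The main obstacle is verifying the hypotheses of Theorem \ref{thm:dcl} in both configurations, in particular the precompactness of $\{\curld E_n\}_n$ in the appropriate negative-norm space associated with the second half of the underlying Hilbert complex. The pointwise assumption on $\{\dive \varepsilon_n E\}_n$ is precisely what makes the cross term $\langle \varepsilon_n E, E_n\rangle$ tractable without requiring any strong-convergence information on $\varepsilon_n$ itself; the $\tau_{\textnormal{bH}}$-convergence of $\varepsilon_n$ does not enter the coercivity step as such, but via Theorem \ref{thm:homelectrostatics} it serves to identify the weak limit $G$ with $\varepsilon_0 E$ and thereby ensure that every subsequential strong limit coincides with the expected one.
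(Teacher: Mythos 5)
Your proposal is correct, but it takes a genuinely different route from the paper's. The paper reduces the coercivity estimate to Theorem \ref{thm:conenergy} applied to the rescaled sequence $E_n-E$: after extracting a subsequence it sets $f_n = \dive\varepsilon_n(E_n-E)$ and $g_n = \curlc(E_n-E)$, uses the compact embeddings $L_2 \hookrightarrow H^{-1}(\Omega)$, $\ran(\curlc)\hookrightarrow H^{-1}(\curlr)$ and the pointwise relative compactness of $(\dive\varepsilon_n E)_n$ to pass to strongly convergent data, and then invokes the full homogenisation/energy machinery (which relies on $\tau_{\textnormal{bH}}$-convergence) to deduce $\langle\varepsilon_n(E_n-E),E_n-E\rangle \to 0$. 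You instead expand the same quadratic form into four terms and compute their limits directly: the first by the div-curl lemma with $(r_n,q_n)=(\varepsilon_n E_n,E_n)$, the third by the div-curl lemma with $(r_n,q_n)=(\varepsilon_n E,E_n)$ (here the pointwise compactness assumption does the work), and the second and fourth by elementary weak convergence and weak operator convergence. The hypotheses of Theorem \ref{thm:dcl} are verified in exactly the same way in both proofs, so neither has an advantage there. What your route buys is transparency and economy: it bypasses Theorems \ref{thm:homelectrostatics} and \ref{thm:conenergy} entirely, and in fact reveals that the $\tau_{\textnormal{bH}}$-convergence hypothesis plays no role in obtaining a strongly convergent subsequence — the four-term cancellation works without it, because the theorem asks only for a convergent subsequence and not for identification of its limit. (As you note, $\tau_{\textnormal{bH}}$ is what one would additionally want in order to pin down the limit field via the homogenisation theorem, but that is not part of the assertion.) The paper's route, in exchange, situates the result within the general homogenisation framework and exhibits the compactness theorem as an application of the energy convergence result, which is the organising principle of the section.
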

\begin{proof}[Proof of Theorem \ref{thm:compact} and, thus, of Theorem \ref{thm:comprough}] Without loss of generality, we may assume that $E_n\to E$ weakly in $L_2(\Omega)^3$ for some $E\in L_2(\Omega)^3$. By Remark \ref{rem:posdefadm}, $\varepsilon_n$ is admissible for all $n\in \N$. Since $\varepsilon_n\to \varepsilon_0$ in the weak operator topology, we obtain $\Re \varepsilon_0\geq c$ and, thus, $\varepsilon_0$ is, too, admissible. We define $\tilde{f}_n \coloneqq  \dive \varepsilon_n E_n$ and $\tilde{g}_n\coloneqq \mathring{\curl} E_n$. Next, we put
\[
   f_n \coloneqq \tilde{f}_n-\dive \varepsilon_n E; \text{ and }   g_n \coloneqq \tilde{g}_n-\mathring{\curl} E.
\]Then, by assumption for another subsequence, $f_n\to f\coloneqq\tilde{f}-\dive \varepsilon E$ and $g_n\to g\coloneqq\tilde{g}-\mathring{\curl} E$ strongly in $H^{-1}(\Omega)$ and $H^{-1}(\curl_{\text{red}})$, respectively. Hence, as $\varepsilon_n\to\varepsilon_0$ in $\tau_{\textnormal{bH}}$, we obtain by Theorem \ref{thm:conenergy}, $E_n\to E$ and $\varepsilon_nE_n\to \varepsilon E$  weakly in $L_2(\Omega)^3$. Moreover, by Theorem \ref{thm:conenergy} again, we deduce
\begin{align*}
 c\langle E_n-E,E_n -E\rangle &\leq \Re \langle E_n-E,\varepsilon_n(E_n-E)\rangle \to 0.
\end{align*}
The claim follows.
\end{proof}

\begin{corollary} Let $\Omega\subseteq\R^3$ open satisfying the electric compactness property. Let $\varepsilon\in \cB(L_2(\Omega)^3)$ with
\[
   \Re \varepsilon\geq c
\]for some $c>0$. Then
\[
   \dom(\dive\varepsilon)\cap\dom(\curlc)\hookrightarrow\hookrightarrow L_2(\Omega)^3.
\]
\end{corollary}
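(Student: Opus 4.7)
The plan is to derive this as a direct consequence of Theorem \ref{thm:compact} by taking a constant sequence of coefficients. Given a bounded sequence $(E_n)_n$ in $\dom(\dive\varepsilon)\cap \dom(\curlc)$, set $\varepsilon_n \coloneqq \varepsilon$ for every $n\in\N$. By construction this sequence is bounded in $\cB(L_2(\Omega)^3)$, satisfies $\Re \varepsilon_n \geq c$ uniformly, and trivially converges to $\varepsilon$ both in the weak operator topology and in $\tau_{\textnormal{bH}}$ (a constant sequence converges to its value in any topology for which the space is Hausdorff at that point, and admissibility of $\varepsilon$ follows from Remark \ref{rem:posdefadm}). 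The relative compactness of $\{\dive \varepsilon_n E; n\in \N\} = \{\dive \varepsilon E\}$ in $H^{-1}(\Omega)$ for each fixed $E\in L_2(\Omega)^3$ is automatic, since this set is a singleton.

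Next I would verify the bounds on the sequence $(E_n)_n$ required by Theorem \ref{thm:compact}. Since $(E_n)_n$ is bounded in $\dom(\dive\varepsilon)\cap \dom(\curlc)$ equipped with the natural graph norm
\[
   \|E\|^2_{\dom(\dive\varepsilon)\cap\dom(\curlc)} = \|E\|^2_{L_2} + \|\dive\varepsilon E\|^2_{L_2} + \|\curlc E\|^2_{L_2},
\]
we immediately get that $(E_n)_n$ is bounded in $L_2(\Omega)^3$, that $(\dive \varepsilon_n E_n)_n = (\dive\varepsilon E_n)_n$ is bounded in $L_2(\Omega)$, and that $(\curlc E_n)_n$ is bounded in $L_2(\Omega)^3$. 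All hypotheses of Theorem \ref{thm:compact} are therefore met, and we obtain an $L_2(\Omega)^3$-strongly convergent subsequence of $(E_n)_n$. This yields the claimed compact embedding.

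I do not expect any step of this argument to pose a genuine obstacle: all the real work has already been done in Theorem \ref{thm:compact}, whose hypotheses degenerate to triviality when the coefficient sequence is constant. The only point worth stating carefully is that coercivity of $\varepsilon$ ensures admissibility via Remark \ref{rem:posdefadm}, so that the constant sequence indeed lies in the class for which $\tau_{\textnormal{bH}}$-convergence is meaningful; beyond that, the corollary is merely the specialisation of the moving-coefficient result to the stationary case.
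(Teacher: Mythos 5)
Your proposal is correct and is exactly the paper's own proof: the paper simply notes that the assertion follows from Theorem \ref{thm:compact} applied to the constant sequence $\varepsilon_n = \varepsilon$. Your write-up makes explicit the small checks (admissibility via Remark \ref{rem:posdefadm}, the triviality of the relative-compactness hypothesis for a singleton, the graph-norm bounds on $(E_n)_n$) that the paper leaves implicit, but the route is the same.
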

\begin{proof}
The assertion follows trivially from Theorem \ref{thm:compact} for the constant sequence $\varepsilon_n=\varepsilon$ with limit $\varepsilon_0=\varepsilon$.
\end{proof}

\begin{remark}\label{rem:sot}
If $\varepsilon_n\to \varepsilon_0$ in the strong operator topology additionally satisfies the divergence condition, then the assumptions in Theorem \ref{thm:compact} are met.
\end{remark}

The rest of this section is devoted to discuss an example. More precisely, we shall establish the following result.

\begin{theorem}\label{thm:examp} Let $\rho\in L_\infty(\R^3)\cap L_1(\R^3)$ and assume that $\rho_n\coloneqq \rho(n\cdot)\to \tilde \rho$ in $\sigma(L_\infty(\R^3),L_1(\R^3))$ for some $\tilde\rho\in L_\infty(\R^3)$. If $\|\rho\|_{L_1}<1$, then $(\varepsilon_n)_{n\geq1}$ in $\cB(L_2(\Omega))$ given by
\[
     \varepsilon_n\phi \coloneqq (1+\rho_n*)\phi \coloneqq \phi+ (x\mapsto \int_\Omega \rho_n(x-y)\phi(y)\dd y)
\]
satisfies the assumptions of Theorem \ref{thm:compact} with $\varepsilon_0=1+\tilde\rho*$.
\end{theorem}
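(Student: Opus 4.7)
The plan is to verify the three hypotheses of Theorem~\ref{thm:compact}---uniform boundedness together with coercivity of $(\varepsilon_n)_n$, strong operator convergence $\varepsilon_n\to\varepsilon_0$, and relative compactness of $(\dive\varepsilon_n E)_n$ in $H^{-1}(\Omega)$ for each $E\in L_2(\Omega)^3$---and then to invoke Remark~\ref{rem:sot}, which upgrades SOT convergence plus the divergence condition to all the assumptions of Theorem~\ref{thm:compact}, including the required $\tau_{\textnormal{bH}}$-convergence. Throughout, I interpret $\varepsilon_n=1+\rho_n*$ as acting componentwise on $L_2(\Omega)^3$.

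Boundedness and coercivity should follow at once from Young's inequality. A change of variables gives $\|\rho_n\|_{L_1(\R^3)}=n^{-3}\|\rho\|_{L_1}\leq\|\rho\|_{L_1}$, so extending $\phi\in L_2(\Omega)$ by zero to $\R^3$ and applying Young's convolution inequality yields $\|\rho_n*\phi\|_{L_2(\Omega)}\leq\|\rho\|_{L_1}\|\phi\|_{L_2}$. This gives $\sup_n\|\varepsilon_n\|\leq 1+\|\rho\|_{L_1}$ and, by Cauchy--Schwarz,
\[
\Re\langle\phi,\varepsilon_n\phi\rangle\geq\|\phi\|_{L_2}^2-\|\rho\|_{L_1}\|\phi\|_{L_2}^2=(1-\|\rho\|_{L_1})\|\phi\|_{L_2}^2,
\]
which produces the uniform coercivity constant $c\coloneqq 1-\|\rho\|_{L_1}>0$.

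For the strong operator convergence, I would first check that $\varepsilon_0=1+\tilde\rho*$ is well defined as a bounded operator on $L_2(\Omega)$: since $\Omega$ is bounded and $\tilde\rho\in L_\infty(\R^3)$, a Cauchy--Schwarz estimate gives $\|\tilde\rho*\phi\|_{L_2(\Omega)}\leq|\Omega|\,\|\tilde\rho\|_\infty\|\phi\|_{L_2}$. Next, for test functions $\phi\in C_c(\Omega)$ and each fixed $x\in\Omega$, the map $z\mapsto\phi(x-z)$ lies in $C_c(\R^3)\subseteq L_1(\R^3)$, so the hypothesis $\rho_n\to\tilde\rho$ in $\sigma(L_\infty,L_1)$ yields pointwise convergence $(\rho_n*\phi)(x)\to(\tilde\rho*\phi)(x)$. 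Combined with the uniform bound $|(\rho_n*\phi)(x)|\leq\|\phi\|_\infty\|\rho\|_{L_1}$ and $|\Omega|<\infty$, dominated convergence delivers $\rho_n*\phi\to\tilde\rho*\phi$ in $L_2(\Omega)$. Density of $C_c(\Omega)$ in $L_2(\Omega)$ and the equiboundedness of $(\varepsilon_n)_n$ from the previous step then extend SOT convergence to all of $L_2(\Omega)$ by a standard $3\epsilon$-argument.

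Finally, for the divergence condition I fix $E\in L_2(\Omega)^3$ and apply the SOT step componentwise, so that $\rho_n*E\to\tilde\rho*E$ strongly in $L_2(\Omega)^3$. Since $\dive\colon L_2(\Omega)^3\to H^{-1}(\Omega)$ is continuous and $\dive\varepsilon_n E=\dive E+\dive(\rho_n*E)$, the sequence $(\dive\varepsilon_n E)_n$ is norm-convergent, hence relatively compact, in $H^{-1}(\Omega)$; Remark~\ref{rem:sot} then closes the argument. The main obstacle I anticipate is the strong operator convergence: the delicate point is that $\tilde\rho$ is only guaranteed to be in $L_\infty$, so its interpretation as a convolution operator on $L_2(\Omega)$ rests crucially on the boundedness of $\Omega$, and the $3\epsilon$-extension from smooth compactly supported test functions must be driven by the uniform Young bound on $(\rho_n*)_n$ rather than by any a priori estimate for $\tilde\rho*$.
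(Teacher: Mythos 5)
Your proof is correct and all three hypotheses of Theorem~\ref{thm:compact} (plus Remark~\ref{rem:sot}) are indeed verified, but the route to strong operator convergence is genuinely different from the paper's. The paper factors the argument through two auxiliary lemmas: Lemma~\ref{lem:unifbdd} first proves relative compactness of the orbit $\{(1+\rho_n*)E;\, n\geq1\}$ in $L_2(\Omega)^3$, by approximating $\rho$ in $L_1$ by a $W^1_1$-function, observing $\|\partial_j\rho_n\|_{L_1}\le\|\partial_j\rho\|_{L_1}$ to get a uniform $H^1$-bound, and invoking Rellich; then Lemma~\ref{lem:wot} establishes weak operator convergence by testing against $C_c^\infty(\Omega)$ via Fubini and dominated convergence, upgrading to strong operator convergence precisely because the orbit is relatively compact. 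Your argument short-circuits this: you get strong operator convergence directly from the pointwise limit $(\rho_n*\phi)(x)\to(\tilde\rho*\phi)(x)$ for $\phi\in C_c(\Omega)$ (using that $z\mapsto\phi(x-z)\in L_1$ and $\sigma(L_\infty,L_1)$-convergence), dominated convergence over the bounded domain $\Omega$, and a $3\epsilon$-density argument resting on the uniform Young bound $\|\rho_n*\|\le\|\rho\|_{L_1}$ and the separate Cauchy--Schwarz bound $\|\tilde\rho*\|\le|\Omega|\,\|\tilde\rho\|_{L_\infty}$. The divergence-compactness hypothesis then falls out for free since norm-convergent sequences are relatively compact and $\dive\colon L_2(\Omega)^3\to H^{-1}(\Omega)$ is continuous, whereas the paper obtains it directly from Lemma~\ref{lem:unifbdd}. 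Your route is more elementary (no $W^1_1$-regularisation, no Rellich); the paper's Lemma~\ref{lem:unifbdd} buys a slightly stronger intermediate statement (compactness of the orbit even without an identified limit). Both establish the boundedness and coercivity $\Re\varepsilon_n\ge 1-\|\rho\|_{L_1}>0$ identically via Young's inequality, your observation $\|\rho_n\|_{L_1}=n^{-3}\|\rho\|_{L_1}$ being the key scaling fact. One small remark: that same scaling shows $\|\rho_n*\|\to0$ in operator norm, so in fact $\tilde\rho*=0$ and $\varepsilon_0=1$; neither you nor the paper exploits this, and the arguments are written to be agnostic about the value of $\tilde\rho$, which is harmless.
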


We prove Theorem \ref{thm:examp} in several lemmas.

\begin{lemma}\label{lem:unifbdd} Let $\rho\in L_1(\R^3), E\in L_2(\Omega)^3$; $\rho_n\coloneqq \rho(n\cdot)$. Then $\{((1+\rho_n*)E); n\in \N, n\geq1\}\subseteq L_2(\Omega)^3$ is relatively compact.
\end{lemma}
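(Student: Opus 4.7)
The plan is to observe that scaling $\rho$ by $n$ shrinks its $L_1$ norm drastically, which forces the sequence $((1+\rho_n\ast)E)_n$ to converge strongly to $E$ in $L_2(\Omega)^3$; a convergent sequence in a metric space is automatically relatively compact.

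More precisely, I would first compute the $L_1$-norm of $\rho_n$. By the substitution $y = nx$ on $\R^3$, one obtains
\[
  \|\rho_n\|_{L_1(\R^3)} = \int_{\R^3} |\rho(nx)| \d x = n^{-3} \|\rho\|_{L_1(\R^3)},
\]
so $\|\rho_n\|_{L_1} \to 0$ as $n \to \infty$. Next, extending each component of $E$ by zero outside $\Omega$ and interpreting the convolution over $\R^3$, Young's convolution inequality gives
\[
   \|\rho_n \ast E\|_{L_2(\Omega)^3} \le \|\rho_n \ast E\|_{L_2(\R^3)^3} \le \|\rho_n\|_{L_1(\R^3)} \|E\|_{L_2(\Omega)^3}.
\]
Combining the two, $\rho_n \ast E \to 0$ strongly in $L_2(\Omega)^3$, hence $(1+\rho_n\ast)E \to E$ strongly in $L_2(\Omega)^3$.

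Finally, any strongly convergent sequence in a metric space is relatively compact (its closure consists of the sequence together with its limit and is therefore compact). This yields the claim. I do not anticipate a genuine obstacle: the only slightly subtle point is handling the convolution on $\Omega$ rather than on $\R^3$, but this is resolved by the standard zero-extension argument used above.
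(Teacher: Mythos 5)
Your proof is correct, and it takes a genuinely different and markedly simpler route than the paper's. The paper proceeds in two steps: first it assumes $\rho\in W^1_1(\R^3)$, bounds $\partial_j(\rho_n*E)$ uniformly in $L_2(\Omega)^3$ via Young's inequality, and applies the Rellich--Kondrachov theorem; then it removes the smoothness assumption by approximating a general $\rho\in L_1(\R^3)$ by a mollified $\eta*\rho$ and running a total-boundedness argument. You short-circuit all of this by noticing that the scaling $\rho_n=\rho(n\cdot)$ collapses the $L_1$-norm: the substitution $y=nx$ in $\R^3$ brings in a Jacobian factor $n^{-3}$, so $\|\rho_n\|_{L_1(\R^3)}=n^{-3}\|\rho\|_{L_1(\R^3)}\to 0$, and Young's inequality (after the standard zero-extension of $E$) forces $\rho_n*E\to 0$ in $L_2(\Omega)^3$; a convergent sequence together with its limit is compact, so the claimed set is relatively compact. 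Your computation in fact yields the stronger statement that $1+\rho_n*\to 1$ in operator norm, which is strictly more than the lemma claims. It also suggests that the change-of-variables step in the paper's Step~1 misplaces the three-dimensional Jacobian (the displayed equality $\int_{\Omega} n \lvert\partial_j \rho(n x)\rvert \d x = \int_{n\Omega} \lvert\partial_j \rho(x)\rvert \d x$ would only hold in dimension one, the correct scaling carrying an extra factor $n^{-2}$), though this only makes the paper's derived bound suboptimal rather than invalid. For the hypotheses as written, your argument is the cleaner and more transparent one.
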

\begin{proof}
We prove the claim in two steps. 

First step: Assume additionally that $\rho\in W^1_1(\R^3)$. In this case, for $j\in \{1,2,3\}$, we note $\partial_j \rho_n = n(\partial_j\rho)(n\cdot)$ and 
 \[
    \| \partial_j\rho_n\|_{L_1(\Omega)} = \int_{\Omega} n |\partial_j \rho(n x)| \dd x = \int_{n\Omega} |\partial_j \rho(x)|\dd x \leq \|\partial_j \rho\|_{L_1(\R^3)}. 
 \]
 Hence, $(\partial_j \rho_n*E)_{n\in \N, j\in \{1,2,3\}}$ is uniformly bounded in $L_2(\Omega)^3$. In particular, $(\rho_n*E)_n$ is uniformly bounded in $H^1(\Omega)^3$ and, by Rellich's selection theorem, $\{\rho_n*E; n\in \N,n\geq 1\}$ is relatively compact in $L_2(\Omega)^3$, proving the assertion for the first step.
  
 Second step: We show that $(\rho_n*E)_n$ is totally bounded in $L_2(\Omega)^3$. For this, let $\varepsilon>0$. Then we find $\eta\in C_c^\infty(\Omega)$ such that $\|\rho-\eta*\rho\|_{L_1}\leq \varepsilon$. By the first step, we find a finite set $N\subseteq \N$ such that
 \[
     \bigcup_{n\in N} B( (\eta*\rho)_n*E,\varepsilon) \supseteq \{(\eta*\rho)_n*E; n\in\N\}.
 \]
  Thus, with Young's inequality again, for $n\geq 1$,
  \[
     \|\rho_n*E- (\eta*\rho)_n*E\|_{L_2}\leq \frac1n \|\rho-\eta*\rho\|_{L_1}\|E\|_{L_2}\leq \varepsilon \|E\|_{L_2},
  \]  the assertion follows.
\end{proof}
\begin{lemma}\label{lem:wot} Let $(\kappa_n)_n$ bounded in $L_\infty(\R^3)\cap L_1(\R^3)$ and  $\kappa\in L_\infty(\R^n)$. Assume that $\kappa_n\to \kappa$ in $\sigma(L_\infty(\R^3),L_1(\R^3))$. Then $\kappa_n*\to \kappa*$ in the weak operator topology of $\cB(L_2(\Omega)^3)$. If, additionally, $\kappa_n =\rho(n\cdot)$ for some $\rho\in L_1(\R^3)$, then $\kappa_n*\to \kappa*$ in the strong operator topology.
\end{lemma}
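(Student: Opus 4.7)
The plan is to handle the two convergences separately; the first is a direct Fubini computation against the $\sigma(L_\infty,L_1)$ convergence, and the second combines that weak convergence with the compactness from Lemma~\ref{lem:unifbdd}.

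For the weak operator topology statement, I would fix $\phi,\psi\in L_2(\Omega)$, extended by zero to $\R^3$, and apply Fubini (which is justified because $\kappa_n\in L_\infty(\R^3)$, $\Omega$ is bounded\footnote{$\Omega$ being bounded follows from the electric compactness property; and in any case it is the only case of interest for Theorem~\ref{thm:examp}.} and $\phi,\psi$ have compact support) to rewrite
\[
\langle \kappa_n * \phi,\psi\rangle_{L_2(\Omega)} \;=\; \int_{\R^3} \kappa_n(z)\, g(z)\,\dd z,\qquad g(z)\coloneqq \int_\Omega \phi(y)\,\overline{\psi(y+z)}\,\dd y.
\]
By Cauchy--Schwarz $g$ is bounded by $\|\phi\|_{L_2}\|\psi\|_{L_2}$, and $\spt g\subseteq \overline{\Omega-\Omega}$ is compact, so $g\in L_1(\R^3)$. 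Hence the $\sigma(L_\infty(\R^3),L_1(\R^3))$-convergence $\kappa_n\to\kappa$ gives $\int \kappa_n g\to\int \kappa g = \langle \kappa *\phi,\psi\rangle_{L_2(\Omega)}$, which is weak operator topology convergence on $L_2(\Omega)$; coordinatewise this extends to $L_2(\Omega)^3$.

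For the strong operator topology statement under the additional assumption $\kappa_n=\rho(n\cdot)$ with $\rho\in L_1(\R^3)$, I would fix $\phi\in L_2(\Omega)^3$ and invoke Lemma~\ref{lem:unifbdd} (applied componentwise, or just to $E=\phi$), which asserts that $\{\kappa_n*\phi;\, n\geq 1\}$ is relatively compact in $L_2(\Omega)^3$. By the first part of the proof, $\kappa_n*\phi\rightharpoonup \kappa*\phi$ weakly in $L_2(\Omega)^3$. Thus every subsequence of $(\kappa_n*\phi)_n$ has a norm-convergent sub-subsequence whose limit must equal the weak limit $\kappa*\phi$; a standard Urysohn subsequence argument then shows that the full sequence $\kappa_n*\phi$ converges to $\kappa*\phi$ in norm, which gives strong operator topology convergence.

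The only mildly delicate point is the integrability of the cross-correlation $g$, which hinges on the boundedness of $\Omega$. I do not anticipate any serious obstacle beyond this routine verification.
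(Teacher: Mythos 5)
Your proof is correct and follows essentially the same route as the paper: the weak operator topology convergence is obtained by a Fubini computation pairing $\kappa_n$ against an $L_1$ cross-correlation and invoking the $\sigma(L_\infty,L_1)$-hypothesis, and the strong operator topology convergence is obtained by combining the weak convergence with the relative compactness from Lemma~\ref{lem:unifbdd} and a subsequence argument. The only cosmetic difference is that the paper first invokes Young's inequality for uniform operator-norm bounds and then tests on the dense class $C_c^\infty(\Omega)$, whereas you test directly on $L_2(\Omega)$ by checking that the cross-correlation $g$ is integrable — a slightly more self-contained variant of the same idea.
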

\begin{proof}
  By Young's inequality, $(\kappa_n*)_n$ considered as an operator sequence in $\cB(L_2(\R^3))$ is uniformly bounded. First we show convergence in the weak operator topology. For this, by the observed boundedness, it suffices to prove convergence of expressions of the type $\langle \kappa_n*\psi,\phi\rangle$ for $\psi,\phi\in C_c^\infty(\Omega)$, only. The latter, however, is a consequence of Fubini's and the dominated convergence theorem. Moreover, in the particular case $\kappa_n=\rho(n\cdot)$ by Lemma \ref{lem:unifbdd}, $(\kappa_n*E)_n$ relatively compact in $L_2(\Omega)^3$. This together with weak convergence proves strong operator topology convergence.
\end{proof}

\begin{proof}[Proof of Theorem \ref{thm:examp}] By Lemma \ref{lem:unifbdd}, it follows that $((1+\rho_n*)E)_n$ is relatively compact in $L_2(\Omega)^3$. Hence, by the continuity of $\dive \colon L_2(\Omega)^3\to H^{-1}(\Omega)$, it follows that $(\dive (1+\rho_n*)E)_n$ is relatively compact in $H^{-1}(\Omega)$. Moroever, we obtain
\[
     \Re (1+\rho_n*)\geq 1-\sup_{n}\|\rho_n\|_{L_1}>0.
\]
Finally, as the strong operator topology of bounded sequences is stronger both than the weak operator topology and $\tau_{\textnormal{bH}}$, the assertion follows (see also Remark \ref{rem:sot}).
\end{proof}

Being formulated for potentially topologically nontrivial domains, convergence in $\tau_{\textnormal{bH}}$ generalises the concept of nonlocal $H$-convergence for bounded sequences as introduced in \cite{W18_NHC}. Since this generalisation consists in the consideration of other domains, the theorem stating the equivalence of local $H$-convergence and nonlocal $H$-convergence as presented in \cite[Theorem 5.11]{W18_NHC} or Theorem \ref{thm:locnonloc0} below does not apply anymore. However, in the next section, we show that local $H$-convergence is equivalent to convergence in $\tau_{\textnormal{bH}}$ for multiplication operators confirming the `correctness' of our generalisation, see also Section \ref{sec:nHc}.

\section{Relationship to local $H$-convergence}\label{sec:locnonlocnontriv}

Finally, we position convergence in $\tau_{\textnormal{bH}}$ in the context of local $H$-convergence; that is, we provide a proof of Theorem \ref{thm:locn}. As before, we let throughout $\Omega\subseteq\R^3$ open satisfying the electric compactness property, i.e., 
\[
    \dom(\dive)\cap \dom(\curlc)\hookrightarrow \hookrightarrow L_2(\Omega)^3.
\] and
\[
   \tau_{\textnormal{bH}}  =\tau_{\textnormal{b}}(\ran(\mathring{\grad}),\ran(\mathring{\grad})^{\bot}).
\]
Moreover, we define for $0<\alpha<\beta$ 
\[
   M(\alpha,\beta;\Omega)\coloneqq \{ a\in L_\infty(\Omega)^{3\times 3}; \Re a\geq \alpha,\Re a^{-1}\geq 1/\beta \}.
\]
Recall that a sequence $(a_n)_n$ in $M(\alpha,\beta;\Omega)$ \textbf{locally $H$-converges} or \textbf{converges in the local $H$-sense} to some $a\in M(\alpha,\beta;\Omega)$, if for all $f\in H^{-1}(\Omega)$ and $u_n\in H_0^1(\Omega)$ satisfying
\[
   -\dive a_n\gradc u_n = f,
\]one obtains $u_n\to u$ weakly and $ a_n\gradc u_n \to a\gradc u$ weakly in $H_0^1(\Omega)$ and $L_2(\Omega)^3$, respectively.  The main result of this section reads as follows.

\begin{theorem}\label{thm:locstrongnonloc} Let $(\varepsilon_n)_n$ in $M(\alpha,\beta;\Omega)$ and $\varepsilon\in M(\alpha,\beta;\Omega)$. Assume that $\varepsilon_n\to \varepsilon$ in the local $H$-sense. Then $\varepsilon_n\to \varepsilon$ in $\tau_{\textnormal{bH}}$.
\end{theorem}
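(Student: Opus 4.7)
The aim is to verify, for $\varepsilon_n\to\varepsilon$ in the local $H$-sense, the four weak-operator convergences that characterise $\tau_{\textnormal{bH}}$ via Theorem~\ref{thm:charnonlb}(i): WOT-convergence of $\varepsilon_{n,00}^{-1}$, $\varepsilon_{n,00}^{-1}\varepsilon_{n,01}$, $\varepsilon_{n,10}\varepsilon_{n,00}^{-1}$, and of the Schur complement $a_{S,n}\coloneqq\varepsilon_{n,11}-\varepsilon_{n,10}\varepsilon_{n,00}^{-1}\varepsilon_{n,01}$ relative to the decomposition $L_2(\Omega)^3=\ran(\gradc)\oplus\ker(\dive)$. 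The plan is to handle the first three by direct fixed-right-hand-side arguments (using local $H$-convergence both for $\varepsilon_n$ and for $\varepsilon_n^T$), and to dispatch the Schur complement through a corrector-plus-div-curl computation.

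For the first two blocks I would fix $g=\gradc w\in\ran(\gradc)$ with $w\in H_0^1(\Omega)$ and consider $u_n\in H_0^1(\Omega)$ solving the fixed-right-hand-side problem $-\dive\varepsilon_n\gradc u_n=-\dive g$. Since $\varepsilon_n\gradc u_n-g\in\ker(\dive)$, identifying the Helmholtz blocks gives $\gradc u_n=\varepsilon_{n,00}^{-1}g$ and $\iota_1^*\varepsilon_n\gradc u_n=\varepsilon_{n,10}\varepsilon_{n,00}^{-1}g$. Local $H$-convergence then yields $\gradc u_n\rightharpoonup\gradc u$ and $\varepsilon_n\gradc u_n\rightharpoonup\varepsilon\gradc u$ weakly in $L_2(\Omega)^3$, delivering the first two WOT-convergences. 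The third block I would obtain by replaying this argument for the transposed sequence $\varepsilon_n^T$, which also locally $H$-converges (to $\varepsilon^T$) by the classical symmetry of local $H$-convergence, and then passing to adjoints: $(\varepsilon_n^T)_{10}((\varepsilon_n^T)_{00})^{-1}=(\varepsilon_{n,00}^{-1}\varepsilon_{n,01})^T$, and adjunction is continuous in the weak operator topology.

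The Schur complement is the main obstacle. For $h\in\ker(\dive)$ I would set $z_n\coloneqq\varepsilon_{n,00}^{-1}\varepsilon_{n,01}h$ and $q_n\coloneqq h-\iota_0 z_n\in L_2(\Omega)^3$; a short computation gives $\iota_0^*\varepsilon_n q_n=0$, hence $\varepsilon_n q_n\in\ker(\dive)$ and $a_{S,n}h=\iota_1^*\varepsilon_n q_n$. The third-block convergence already delivers $q_n\rightharpoonup q\coloneqq h-\iota_0 z$ weakly in $L_2$ with $z=\varepsilon_{00}^{-1}\varepsilon_{01}h$, and along a subsequence $\varepsilon_n q_n\rightharpoonup r\in\ker(\dive)$; the task is to identify $r=\varepsilon q$. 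For this I would invoke the Murat--Tartar corrector method applied to the transposed sequence: for every $\xi\in\R^3$, classical $H$-convergence theory furnishes $\tilde w_n^\xi\in H^1_{\loc}(\Omega)$ with $\gradc\tilde w_n^\xi\rightharpoonup\xi$ and $\varepsilon_n^T\gradc\tilde w_n^\xi\rightharpoonup\varepsilon^T\xi$ weakly in $L_{2,\loc}$, and with $\dive(\varepsilon_n^T\gradc\tilde w_n^\xi)$ relatively compact in $H^{-1}_{\loc}$. Fixing a cutoff $\chi\in C_c^\infty(\Omega)$, I would then compute the same scalar product two ways via Theorem~\ref{thm:dcl}. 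Pairing the divergence-free $\varepsilon_n q_n$ against $\chi\gradc\tilde w_n^\xi$ (whose curl $\gradc\chi\times\gradc\tilde w_n^\xi$ is $L_2$-bounded, hence $H^{-1}$-compact via $L_2\hookrightarrow\hookrightarrow H^{-1}$) gives $\langle\varepsilon_n q_n,\chi\gradc\tilde w_n^\xi\rangle\to\langle r,\chi\xi\rangle$. Using that $\varepsilon_n$ is a multiplication operator, the same expression equals $\langle q_n,\chi\varepsilon_n^T\gradc\tilde w_n^\xi\rangle$; since $\curl q_n=\curl h$ is fixed and $\dive(\chi\varepsilon_n^T\gradc\tilde w_n^\xi)$ is $H^{-1}$-compact (from the corrector estimate together with the cutoff cross-term), Theorem~\ref{thm:dcl} applies again and yields the limit $\langle q,\chi\varepsilon^T\xi\rangle=\langle\varepsilon q,\chi\xi\rangle$. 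Equating the two expressions forces $\langle r-\varepsilon q,\chi\xi\rangle=0$ for arbitrary $\chi\in C_c^\infty(\Omega)$ and $\xi\in\R^3$, hence $r=\varepsilon q$ a.e.; consequently $a_{S,n}h=\iota_1^*\varepsilon_n q_n\rightharpoonup\iota_1^*\varepsilon q=a_S h$ along every subsequence with a common limit, and the full sequence converges.

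The main difficulty throughout is the moving right-hand side hidden inside $q_n$: the auxiliary $u_n$ satisfies $-\dive\varepsilon_n\gradc u_n=-\dive\varepsilon_n h$ whose datum varies with $n$, so local $H$-convergence cannot be applied in a single step to identify $\lim\varepsilon_n q_n$. The corrector construction for $\varepsilon_n^T$ is precisely the Murat--Tartar device that sidesteps this obstacle by producing oscillating test sequences with exactly the compensated-compactness structure needed by the div-curl lemma to force $r=\varepsilon q$ from two independent scalar-product evaluations.
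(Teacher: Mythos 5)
Your proposal follows a genuinely different route from the paper. You work throughout with the decomposition $L_2(\Omega)^3=\ran(\gradc)\oplus\ker(\dive)$ of Theorem~\ref{thm:charnonlb}(i), obtain $\varepsilon_{n,00}^{-1}$ and $\varepsilon_{n,10}\varepsilon_{n,00}^{-1}$ directly from the definition of local $H$-convergence, pass to the adjoint/transposed sequence to get $\varepsilon_{n,00}^{-1}\varepsilon_{n,01}$ (using the classical symmetry $a_n\to a$ iff $a_n^{T}\to a^{T}$ and WOT-continuity of $a\mapsto a^*$), and then attack the Schur complement via Murat--Tartar correctors for $\varepsilon_n^{T}$ paired against $\varepsilon_n q_n$ through two applications of Theorem~\ref{thm:dcl}. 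The paper instead switches to the decomposition $\ran(\curl)^{\bot}\oplus\ran(\curl)$ of Theorem~\ref{thm:charnonlb}(ii), absorbs the harmonic fields into an augmented gradient operator $G=(\gradc,\iota_D)$, solves $G^*\varepsilon_n G U_n=(f,y)$ and a separate $\curl$-problem, and identifies the limits by restricting to balls $B\subset\Omega$ with $\cH_D(B)=\{0\}$ and invoking the abstract characterisation of nonlocal $H$-convergence (Theorem~\ref{thm:DCLcht}) together with Theorem~\ref{thm:locnonloc0} there. Your approach avoids the auxiliary operator $G$ entirely and keeps the harmonic fields inside the Schur-complement block; the price is that you must build correctors and run a compensated-compactness identification by hand, whereas the paper delegates that identification to the equivalence already proved for topologically trivial subdomains. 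Both arguments are localisation-plus-div-curl at the core, but packaged quite differently, and your first three blocks are clean and correct (modulo the notational slip $\gradc\tilde w_n^\xi\rightharpoonup\xi$, which is impossible for $\tilde w_n^\xi\in H_0^1(\Omega)$: the corrector lives in $\xi\cdot x+H_0^1(\Omega)$ and one should write $\grad$, not $\gradc$).

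There is one genuine gap in the Schur-complement step that needs to be closed. In the first application of Theorem~\ref{thm:dcl} you assert that $\grad\chi\times\grad\tilde w_n^{\xi}$ is ``$L_2$-bounded, hence $H^{-1}$-compact via $L_2\hookrightarrow\hookrightarrow H^{-1}$.'' But the compactness required there is in the graph-norm dual $H^{-1}(\curlr)$, not in the Sobolev space $H^{-1}(\Omega)^3$, and the embedding $L_2(\Omega)^3\hookrightarrow H^{-1}(\curlr)$ is compact iff $\dom(\curlr)\hookrightarrow L_2(\Omega)^3$ is compact. Since $\dom(\curlr)\subseteq\dom(\curl)\cap\ker(\mathring{\dive})$ carries no prescribed boundary condition on the normal trace beyond $\dive=0$, this is the ``magnetic'' half of the Maxwell compactness property, which is not implied by the electric compactness property assumed throughout this section. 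So on $\Omega$ itself the claimed implication fails, and Theorem~\ref{thm:dcl} cannot be invoked directly as you state. The fix is exactly the localisation the paper uses: since $\chi\in C_c^\infty(\Omega)$, pass to an open ball $B\supseteq\spt\chi$ with $\overline{B}\subseteq\Omega$, restrict all quantities to $B$, and note that $B$ is a bounded weak Lipschitz domain, so by the Picard--Weber--Weck selection theorem (Theorem~\ref{thm:PWW}) both $L_2(B)\hookrightarrow H^{-1}(B)$ and $\ran(\curlc,B)\hookrightarrow H^{-1}(\curlr,B)$ are compact; then the div-curl lemma applies on $B$ and, since the integrand is supported in $B$, the resulting limit is the one you want on $\Omega$. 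Once you insert this localisation (and write $\grad$ for the corrector gradient), the corrector/div-curl identification of $r=\varepsilon q$ and hence of the Schur complement is sound.
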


The proof of this theorem needs some preparations. First, we recall a characterisation of nonlocal $H$-convergence for topologically trivial domains from \cite{W18_NHC}. Even though the proof in \cite{W18_NHC} is provided in a slightly less general setting for the class of operator sequences considered, the proof for admissible operators is (almost) literally the same. The required modifications are obvious. 

\begin{theorem}[{{\cite[Theorem 6.2; see also Theorem 6.5]{W18_NHC}}}]\label{thm:DCLcht} Let $\Omega_{\textnormal{t}}\subseteq \mathbb{R}^3$ be open satisfying the electric compactness property and $\cH_D(\Omega_\textnormal{t})=\{0\}$.  Let $(a_n)_n$ be bounded in $\cB(L_2(\Omega_\textnormal{t})^3)$ and admissible, $a\in \cB(L_2(\Omega_\textnormal{t})^3)$ admissible. Then the following statements are equivalent:
\begin{enumerate}
  \item[(i)] $(a_n)_n$ converges to $a$ in $\tau(\ran(\gradc,{\Omega_{\textnormal{t}}}), \ran(\curl,{\Omega_{\textnormal{t}}}))$
      \item[(ii)] for all $(q_n)_n$ in $L_2(\Omega_{\textnormal{t}})^3$ weakly convergent to some $q$ in $L_2(\Omega_{\textnormal{t}})^3$ and $\kappa\colon \mathbb{N}\to\mathbb{N}$ strictly monotone we have: Given the conditions
    \begin{enumerate}
      \item[(a)] $(\dive (a_{\kappa(n)} q_n))_n$ is relatively compact in $H^{-1}(\Omega_{\textnormal{t}})$,
      \item[(b)] $(\interior{\curl} (q_n))_n$ is relatively compact in $H^{-1}(\curlr,\Omega_{\textnormal{t}})$,
    \end{enumerate}
    then $a_{\kappa(n)}q_n\rightharpoonup aq$ as $n\to\infty$.
\end{enumerate}
\end{theorem}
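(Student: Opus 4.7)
We follow the template of \cite[Theorem 6.2]{W18_NHC}. The hypothesis $\cH_D(\Omega_\textnormal{t})=\{0\}$ yields the orthogonal Helmholtz decomposition
\[
  L_2(\Omega_\textnormal{t})^3 = \cH_0 \oplus \cH_1,\quad \cH_0 := \ran(\gradc),\ \cH_1 := \ran(\curl),
\]
so write $q_n = q_n^0 + q_n^1$ and $q = q^0+q^1$, and express $a_n, a$ in the associated $2\times 2$ block form $(a_{n,jk})_{j,k\in\{0,1\}}$. Recall that $a_n\to a$ in $\tau(\cH_0,\cH_1)$ means joint weak operator convergence of $a_{n,00}^{-1}$, $a_{n,00}^{-1}a_{n,01}$, $a_{n,10}a_{n,00}^{-1}$, and the Schur complement $a_{n,S} := a_{n,11}-a_{n,10}a_{n,00}^{-1}a_{n,01}$. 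Since $\gradcd$ annihilates $\iota_1(\cH_1)$, hypothesis (a) is equivalent to the relative compactness in $\cH_0$ of $A_n := \iota_0^*(a_{\kappa(n)}q_n) = a_{\kappa(n),00}q_n^0 + a_{\kappa(n),01}q_n^1$; since $\curl$ annihilates $\iota_0(\cH_0)$, hypothesis (b) amounts to relative compactness of $\curld(\iota_1 q_n^1)$.

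\textbf{(ii)$\Rightarrow$(i).} For $\phi\in\cH_0$ take the test sequence $q_n := a_{n,00}^{-1}\phi \in \cH_0$: then $q_n^1=0$ gives (b) trivially, and $a_n q_n = \iota_0\phi + \iota_1(a_{n,10}a_{n,00}^{-1})\phi$ yields $\dive a_n q_n = \gradcd\iota_0\phi$ constant, so (a) holds. Admissibility forces $(q_n)$ bounded, so along a subsequence $q_{\kappa(n)}\to q^*\in\cH_0$ weakly; applying (ii) and projecting $a_{\kappa(n)}q_n\to aq^*$ onto $\cH_0$ gives $a_{00}q^*=\phi$ (hence $q^* = a_{00}^{-1}\phi$), and projecting onto $\cH_1$ gives $a_{\kappa(n),10}a_{\kappa(n),00}^{-1}\phi\to a_{10}a_{00}^{-1}\phi$ weakly. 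As $\phi$ was arbitrary, $a_{n,00}^{-1}\to a_{00}^{-1}$ and $a_{n,10}a_{n,00}^{-1}\to a_{10}a_{00}^{-1}$ in WOT. The remaining two entries follow from inversion symmetry (Lemma \ref{lem:inverse}, Remark \ref{rem:inverse}): condition (ii) is self-dual under the substitution $q_n = a_{\kappa(n)}^{-1}r_n$ (which swaps the roles of (a) and (b)), so the same test-sequence argument applied to $(a_n^{-1})$ pins down $[a_n^{-1}]_{11}^{-1} = a_{n,S}\to a_S$ and $[a_n^{-1}]_{01}[a_n^{-1}]_{11}^{-1} = -a_{n,00}^{-1}a_{n,01}\to -a_{00}^{-1}a_{01}$ in WOT.

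\textbf{(i)$\Rightarrow$(ii).} By (a), $A_n$ is relatively compact in $\cH_0$; passing to a subsequence, $A_n\to A_*$ strongly. The WOT convergence $a_{\kappa(n),00}^{-1}\to a_{00}^{-1}$ multiplied against the strongly convergent $A_n$ yields
\[
   q_n^0 + a_{\kappa(n),00}^{-1}a_{\kappa(n),01}q_n^1 \;=\; a_{\kappa(n),00}^{-1} A_n \;\longrightarrow\; a_{00}^{-1} A_* \quad\text{weakly in } \cH_0.
\]
The crux is to identify the WOT-times-weak limit $a_{\kappa(n),00}^{-1}a_{\kappa(n),01}q_n^1\to a_{00}^{-1}a_{01}q^1$ weakly, which is not automatic. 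This is precisely where the div-curl lemma (Theorem \ref{thm:dcl}) intervenes: for arbitrary $\psi\in\cH_0$, set
\[
   r_n := \iota_1 q_n^1 \in \cH_1 = \ran(\curl), \quad p_n := \iota_0 (a_{\kappa(n),00}^{-1}a_{\kappa(n),01})^*\psi \in \cH_0 = \ran(\gradc).
\]
Then $r_n\to \iota_1 q^1$ and $p_n\to \iota_0 (a_{00}^{-1}a_{01})^*\psi$ weakly in $L_2(\Omega_\textnormal{t})^3$, while $\dive r_n = 0$ (curls are divergence-free) and $\curld p_n = 0$ (gradients are curl-free), so both compactness hypotheses of Theorem \ref{thm:dcl} hold trivially. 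The lemma yields
\[
   \langle a_{\kappa(n),00}^{-1}a_{\kappa(n),01}q_n^1, \psi\rangle = \langle r_n, p_n\rangle \longrightarrow \langle \iota_1 q^1, \iota_0 (a_{00}^{-1}a_{01})^*\psi\rangle = \langle a_{00}^{-1}a_{01}q^1, \psi\rangle,
\]
which combined with the display above forces $A_* = a_{00}q^0 + a_{01}q^1 = \iota_0^*(aq)$. An entirely analogous div-curl argument applied to the second summand in the decomposition $\iota_1^*(a_{\kappa(n)}q_n) = (a_{\kappa(n),10}a_{\kappa(n),00}^{-1})A_n + a_{\kappa(n),S}q_n^1$ (this time pairing $\iota_1 a_{\kappa(n),S}^*\psi \in \cH_1$ with $\iota_1 q_n^1\in\cH_1$, again with trivially vanishing div and compact curl), together with the WOT convergences of $a_{\kappa(n),10}a_{\kappa(n),00}^{-1}$ and $a_{\kappa(n),S}$, gives $\iota_1^*(a_{\kappa(n)}q_n)\to \iota_1^*(aq)$ weakly. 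Summing the two components establishes $a_{\kappa(n)}q_n\to aq$ weakly.

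\textbf{Main obstacle.} The whole proof pivots on the identification of WOT-times-weak-convergent products in the (i)$\Rightarrow$(ii) direction, which pure functional analysis cannot furnish. The saving grace is the compensated-compactness structure of Theorem \ref{thm:dcl}: the geometric inclusions $\ran(\gradc)\subseteq\ker(\curl)$ and $\ran(\curl)\subseteq\ker(\dive)$ make the lemma's hypotheses automatic for the Helmholtz-paired correctors, and the hypotheses (a), (b) provide the only nontrivial compactness needed elsewhere.
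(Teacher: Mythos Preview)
The paper does not prove this statement; it is quoted from \cite{W18_NHC} with the remark that the argument carries over verbatim to admissible operators. Your plan follows that same template, and the architecture of both directions is the right one. There is, however, a genuine slip in the first div-curl application of (i)$\Rightarrow$(ii).

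You write $p_n := \iota_0 (a_{\kappa(n),00}^{-1}a_{\kappa(n),01})^*\psi \in \cH_0$. But $a_{\kappa(n),00}^{-1}a_{\kappa(n),01}\colon \cH_1\to\cH_0$, so its adjoint maps $\cH_0\to\cH_1$; for $\psi\in\cH_0$ the vector $(a_{\kappa(n),00}^{-1}a_{\kappa(n),01})^*\psi$ lives in $\cH_1$, not $\cH_0$. With your choice $r_n\in\cH_1$, $p_n\in\cH_0$ one gets $\langle r_n,p_n\rangle=0$ identically, and the displayed identity fails. More importantly, your justification ``$\curld p_n=0$ (gradients are curl-free)'' is inapplicable: $p_n$ is a curl, not a gradient. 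The correct pairing is
\[
\big\langle a_{\kappa(n),00}^{-1}a_{\kappa(n),01}q_n^1,\psi\big\rangle_{\cH_0}
=\big\langle \iota_1 q_n^1,\; \iota_1(a_{\kappa(n),00}^{-1}a_{\kappa(n),01})^*\psi\big\rangle_{L_2},
\]
with \emph{both} factors in $\cH_1=\ker(\dive)$. Theorem~\ref{thm:dcl} then applies with $\dive\big(\iota_1(a_{\kappa(n),00}^{-1}a_{\kappa(n),01})^*\psi\big)=0$ trivially and $\curld(\iota_1 q_n^1)$ relatively compact \emph{by hypothesis (b)} --- this is exactly where (b) enters, not ``elsewhere'' as your closing paragraph suggests. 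Note that this is precisely the mechanism you correctly invoke in your second div-curl step for $a_{\kappa(n),S}q_n^1$; the first step should read the same way.

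Two smaller points. In (ii)$\Rightarrow$(i) the ``self-dual'' shortcut is imprecise as stated (the substitution $q_n=a_{\kappa(n)}^{-1}r_n$ does not automatically produce a weakly convergent sequence), but a second direct test sequence $q_n:=\psi-a_{n,00}^{-1}a_{n,01}\psi$ with $\psi\in\cH_1$ gives $a_{n,00}^{-1}a_{n,01}$ and $a_{n,S}$ immediately, avoiding the detour. And in (i)$\Rightarrow$(ii) you pass to a subsequence for $A_n\to A_*$ but do not close the loop; once $A_*=\iota_0^*(aq)$ is uniquely identified, the usual subsequence argument upgrades to full-sequence convergence.
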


We recall the equivalence theorem of local and nonlocal $H$-convergence from \cite{W18_NHC} for topologically trivial domains. 

\begin{theorem}[{{\cite[Theorem 5.11]{W18_NHC}}}]\label{thm:locnonloc0} Let $\Omega_{\textnormal{t}}\subseteq \mathbb{R}^3$ be open satisfying the electric compactness property and $\cH_D(\Omega_\textnormal{t})=\{0\}$.  Let $(a_n)_n$ be a sequence in $M(\alpha,\beta;\Omega_{\textnormal{t}})$, $a\in M(\alpha,\beta;\Omega_{\textnormal{t}})$.
 Then the following statements are equivalent:
\begin{enumerate}
  \item[(i)] $(a_n)_n$ converges to $a$ in $\tau(\ran(\gradc,{\Omega_{\textnormal{t}}}), \ran(\curl,{\Omega_{\textnormal{t}}}))$
      \item[(ii)]  $(a_n)_n$ locally $H$-converges to $a$.
\end{enumerate}
\end{theorem}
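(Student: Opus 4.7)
The plan is to verify $\tau_{\textnormal{bH}}$-convergence by checking the four weak-operator-topology convergences that define it, evaluated in the decomposition $L_2(\Omega)^3 = \ran(\gradc) \oplus \ker(\dive)$; by Theorem~\ref{thm:charnonlb}(i) these together amount to $\tau_{\textnormal{bH}}$-convergence. The first map $a\mapsto a_{00}^{-1}$ and the third map $a\mapsto a_{10}a_{00}^{-1}$, both applied to $\gradc v$ with $v\in H_0^1(\Omega)$, reduce to the divergence-form problem $-\dive\varepsilon_n\gradc u_n = -\Delta v$ with $n$-independent right-hand side; their WOT-convergence is therefore an immediate consequence of local $H$-convergence on $\Omega$.

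For the second map $a\mapsto a_{00}^{-1}a_{01}$ applied to $\phi\in\ker(\dive,\Omega)$, the solution $u_n\in H_0^1(\Omega)$ of $-\dive\varepsilon_n\gradc u_n = -\dive(\varepsilon_n\phi)$ has an $n$-dependent right-hand side, so local $H$-convergence does not apply directly. I plan to combine the div-curl lemma (Theorem~\ref{thm:dcl}) with local $H$-convergence of the transposed family $(\varepsilon_n^T)$, which follows by standard arguments from that of $(\varepsilon_n)$. Along a weakly convergent subsequence $u_n\rightharpoonup\hat u$ in $H_0^1$, set $r_n := \varepsilon_n(\gradc u_n - \phi)\in\ker(\dive)$ and, for each test $v\in H_0^1(\Omega)$, let $v_n$ solve the adjoint problem $-\dive\varepsilon_n^T\gradc v_n = -\Delta v$. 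The div-curl lemma applies to both pairings $(r_n,\gradc v_n)$ and $(\gradc u_n-\phi,\,\varepsilon_n^T\gradc v_n)$, since $\dive r_n=0$, $\dive(\varepsilon_n^T\gradc v_n)=\Delta v$, $\curld\gradc v_n=0$, and $\curld(\gradc u_n-\phi)=-\curld\phi$ are all $n$-independent. Since the left-hand sides of the two pairings coincide, equating the limits—and identifying $\varepsilon_n^T\gradc v_n\rightharpoonup\varepsilon^T\gradc v^*$ via local $H$-convergence of $(\varepsilon_n^T)$, where $v^*$ is the limit-PDE solution—yields $\int(r - \varepsilon(\gradc\hat u - \phi))\cdot\gradc v^* = 0$ for all $v^*\in H_0^1(\Omega)$. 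Together with $r\in\ker(\dive)$, this forces $\varepsilon(\gradc\hat u-\phi)\in\ker(\dive,\Omega)$, so $\hat u$ solves the limit PDE; by uniqueness $\hat u = u$, and the whole sequence converges.

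The fourth map $a\mapsto a_{11}-a_{10}a_{00}^{-1}a_{01}$ applied to $\phi\in\ker(\dive)$ equals $-r_n$ by a direct block computation, but the previous argument identifies $r$ only modulo the $\ker(\dive)$-component—direct pairings of $r$ against $\gradc v$ vanish on both sides. To complete the identification I plan to extend $(\varepsilon_n)$ by the identity outside $\Omega$ to a topologically trivial open ball $\tilde\Omega\supseteq\bar\Omega$; a classical extension result yields local $H$-convergence $\tilde\varepsilon_n\to\tilde\varepsilon$ on $\tilde\Omega$, Theorem~\ref{thm:locnonloc0} gives full nonlocal $H$-convergence there, and Theorem~\ref{thm:DCLcht} delivers the div-curl product characterization on $\tilde\Omega$. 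A careful extension of $\gradc u_n-\phi$ across $\partial\Omega$ preserving the requisite div-curl compactness hypotheses (for instance via a divergence-free normal-trace matching outside $\Omega$), followed by restriction back to $\Omega$, pins down the missing $\ker(\dive)$-component of $r$ as $\varepsilon(\gradc u-\phi)$. The main obstacle will be precisely this final step—identifying $r$ on the $\ker(\dive)$-side, where direct div-curl pairings on $\Omega$ alone provide no information; the finite-dimensionality of $\cH_D(\Omega)$, ensured by the electric compactness property, is essential for controlling the harmonic-Dirichlet-field part of the limit.
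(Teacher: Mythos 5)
The paper does not prove this theorem: it is a \emph{recalled} result, attributed to [W18\_NHC, Theorem~5.11], and the preceding sentence reads ``We recall the equivalence theorem of local and nonlocal $H$-convergence from [W18\_NHC] for topologically trivial domains.'' There is therefore no in-paper proof for your proposal to be compared against.

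Evaluating your proposal on its own terms, there is a fatal circularity. When you handle the fourth Schur block $a \mapsto a_{11} - a_{10}a_{00}^{-1}a_{01}$, you extend $(\varepsilon_n)$ by the identity to a topologically trivial ball $\tilde\Omega \supseteq \overline{\Omega}$ and then write that ``Theorem~\ref{thm:locnonloc0} gives full nonlocal $H$-convergence there.'' But Theorem~\ref{thm:locnonloc0} is precisely the statement you are trying to prove, and $\tilde\Omega$ is just another instance of a topologically trivial domain with the electric compactness property; you cannot invoke it as an input. To make the strategy legitimate you would have to obtain the div-curl product characterization on $\tilde\Omega$ (the content of Theorem~\ref{thm:DCLcht}(ii)) directly from local $H$-convergence of the extended family, without appealing to the theorem itself---that is the nontrivial core that remains untouched. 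A second gap: the statement is an \emph{equivalence}, but your proposal only sketches (ii)~$\Rightarrow$~(i); the converse direction, nonlocal implies local $H$-convergence, is never addressed. For orientation, the result the paper \emph{does} prove in this section is Theorem~\ref{thm:locstrongnonloc}, whose proof is non-circular precisely because it applies Theorem~\ref{thm:locnonloc0} as an external black box to a small ball $B \subseteq \Omega$ (restriction, not extension); your proposal reuses that machinery but aims it back at the very theorem being proven, which appears to conflate the two statements.
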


Next, we establish the main result of this section.

\begin{proof}[Proof of Theorem \ref{thm:locstrongnonloc}]
We will show the property described in Theorem \ref{thm:charnonlb} (ii). For the operators in question, we use the orthogonal decomposition induced by $\ran(\curl)^\bot \oplus \ran(\curl)$; the induced operators are denoted by indices as in Theorem \ref{thm:charnonlb} (ii). 
Before we turn to the $\curl$-problem, we show that $\varepsilon_{n,00}^{-1}$ converges in the weak operator topology to $\varepsilon_{00}^{-1}$.
For this, one notes that the operator
\[
   G\coloneqq \begin{pmatrix} \gradc_{\Omega} & \iota_D \end{pmatrix} \colon H^1_0(\Omega) \oplus \cH_D(\Omega) \subseteq L_2(\Omega) \oplus \cH_D(\Omega) \to L_2(\Omega)^3, (u,x)\mapsto \gradc u +x
\]is densely defined and closed. Moreover, 
\[
\ran(G)=\ran(\gradc,\Omega)+\cH_D(\Omega) (=\ran(\curl)^\bot)
\]
is closed as the sum is orthogonal and both $\ran(\gradc,\Omega)$ and $\cH_D(\Omega)$ are closed. Next, $G$ is one-to-one. Consequently, for $f\in L_2(\Omega)$ and $y\in \mathcal{H}_{D}(\Omega)$, we find uniquely determined $u_n\in H_0^1(\Omega)$ and $x_n\in \cH_D(\Omega)$ such that
\[
   G^* \varepsilon_n G\begin{pmatrix} u_n \\ x_n\end{pmatrix} = \begin{pmatrix} f \\ y \end{pmatrix}.
\] It follows that $U_n\coloneqq\begin{pmatrix} u_n \\ x_n\end{pmatrix}$ forms a bounded sequence in $\dom(G)$. Thus, there exists a weakly convergent subsequence $(U_{\kappa(n)})_n$ with limit $U$. We need to uniquely identify $U$. For this, consider $q_n\coloneqq G U_{\kappa(n)}$ and let $B\subseteq \Omega$ be an open ball with $\overline{B}\subseteq \Omega$. Since local $H$-convergence on $\Omega$ implies the same on $B$, we infer (that the restriction to $L_2(B)^3$ of) $(\varepsilon_n)_n$ locally $H$-converges to $\varepsilon$. As $B^{\textnormal{c}}$ is connected, $\cH_D(B)=\{0\}$ (see \cite[Theorem 1]{Picard1982} or \cite[Section 2]{Pauly2021}). Hence, applying the Theorems \ref{thm:DCLcht} and \ref{thm:locnonloc0} to $a_n=\varepsilon_n$ and $\Omega_{\textnormal{t}}=B$, we are in the position to use condition (ii) in Theorem \ref{thm:DCLcht}. For this, we let $\phi\in C_c^\infty(\Omega)$, $\spt \phi\subseteq B$. Then
\begin{align*}
   \dive \varepsilon_{\kappa(n)} \phi q_n&=    \dive \phi \varepsilon_{\kappa(n)}  q_n \\ &= \langle \grad \phi,\varepsilon_{\kappa(n)}q_n\rangle_{\R^3} + \phi \dive \varepsilon_{\kappa(n)}  q_n \\ &= \langle \grad \phi,\varepsilon_{\kappa(n)}q_n\rangle_{\R^3} + f; 
\end{align*}
thus, $(\dive \varepsilon_{\kappa(n)} \phi q_n)_n$ is bounded in $L_2(B)$.
Also, 
\[
   \curlc \phi q_n = \grad \phi \times q_n,
\]
forming a bounded sequence in $L_2(B)^3.$
Since $B$ particularly satisfies the electric compactness property by the Picard--Weber--Weck selection theorem (Theorem \ref{thm:PWW}), the embeddings $L_2(B)\hookrightarrow H^{-1}(B)$ and $\ran(\curlc,B)\hookrightarrow H^{-1}(\curlr,B)$ are compact. Hence,  by condition (ii) in Theorem \ref{thm:DCLcht}, we obtain, using $q\coloneqq GU$, 
\[
   \varepsilon_{\kappa(n)} \phi q_n \to \varepsilon\phi q =\varepsilon \phi U
\]weakly in $L_2(B)$. As both $B$ and $\phi$ were arbitrary, we infer that
\[
   \varepsilon_{\kappa(n)} q_n \to \varepsilon q =\varepsilon GU
\]weakly in $L_2(\Omega)$. Weak closedness of $G^*$ implies 
\[
   (f,y) = G^*\varepsilon_{\kappa(n)} G U_{\kappa(n)} = G^*\varepsilon_{\kappa(n)} q_n \to G^*\varepsilon G U,
\] which determines $U$ uniquely. Hence, $U_n\to U$ weakly. Repeating the argument replacing $\kappa(n)$ by $n$, we deduce that also $\varepsilon_n G U_n\to \varepsilon G U $ weakly. Since $(f,y)$ are arbitrary, we thus obtain both
\[
  \varepsilon_{n,00}^{-1} \to \varepsilon_{00}^{-1}
\]
and 
\[
   \varepsilon_{n,10} \varepsilon_{n,00}^{-1} \to    \varepsilon_{10} \varepsilon_{00}^{-1}
\] in the respective weak operator topologies. 

Next, we turn to the $\curl$-problem. For this, we let $g\in \ran(\curlc,\Omega)$. Then we find uniquely determined $v_n\in \dom(\curlr)$ such that
\[
    \curlrd \varepsilon_n^{-1}\curlr v_n = g.
\]
Since $\varepsilon_n \in M(\alpha,\beta;\Omega)$, we deduce that $(v_n)_n$ forms a bounded sequence in $\dom(\curlr)$. Thus, there is a weakly convergent subsequence $(r_n)_n\coloneqq (\varepsilon_{\kappa(n)}^{-1}\curlr v_{\kappa(n)})_n$ of $(\varepsilon_{n}^{-1}\curlr v_n)_n$  in $L_2(\Omega)^3$; denote by $r$ its limit. Again, let $B$ be an open ball strictly contained in $\Omega$ and $\phi$ a smooth function supported on $B$. Then appealing to condition (ii) in Theorem \ref{thm:DCLcht} with $q_n = \phi r_n$, we compute
\[
  \dive \varepsilon_{\kappa(n)} \phi r_n = \langle \grad \phi, \varepsilon_{\kappa(n)}  r_n\rangle_{\R^3}
\]
and
\[
  \curlc \phi r_n = \grad \phi \times r_n + g.
\]Hence, both (a) and (b) in the just mentioned condition (ii) are met with the help of Theorem \ref{thm:PWW}. By local $H$-convergence on $B$, we deduce 
\[
  \varepsilon_{\kappa(n)} \phi r_n\to   \varepsilon \phi r.
\] weakly. Thus,
\[
  \curlr v_{\kappa(n)} =  \varepsilon_{\kappa(n)} r_n\to  \varepsilon r
\]weakly. The closedness of $\ran(\curlr)$ yields the existence of some $v\in \dom(\curlr)$ such that $\varepsilon r = \curlr v$. Since, by the (weak) closedness of $\curlc$, 
\[
    g = \curlc r_n \to \curlc r = \curlc \varepsilon^{-1} \curlr v,
\]
$v$ is uniquely determined, which, in turn, uniquely determines $r$. Hence, 
\[
   \varepsilon_{n}^{-1}\curlr v_n \to r = \varepsilon^{-1} \curlr v
\]
weakly. Repeating the argument with $n$ instead of $\kappa(n)$, we also obtain
\[
  \curlr v_n =  \varepsilon_n   \varepsilon_{n}^{-1}\curlr v_n \to \varepsilon r = \varepsilon  \varepsilon^{-1} \curlr v = \curlr v.
\] weakly. Thus, $v_n\to v$ weakly in $\dom(\curlr)$ as $\curlr$ is continuously invertible. Since $g$ was arbitrary, the weak convergence $  \varepsilon_{n}^{-1}\curlr v_n \to   \varepsilon^{-1}\curlr v$ implies $\varepsilon_{n,00}^{-1}\varepsilon_{n,01} \to \varepsilon_{00}^{-1}\varepsilon_{01}$ in the weak operator topology. The weak convergence $v_n\to v$ is the last required convergence in Theorem \ref{thm:charnonlb} (ii).
\end{proof}
We conclude this section stating that the local $H$-convergence topology, $\tau_{\textnormal{H}}$, on $M(\alpha,\beta;\Omega)$ thus coincides with the induced topology of $\tau_{\textnormal{bH}}$ on $M(\alpha,\beta;\Omega)$. 
\begin{corollary}\label{cor:locn} We have
\[
   (M(\alpha,\beta;\Omega),\tau_{\textnormal{H}})=   (M(\alpha,\beta;\Omega),\tau_{\textnormal{bH}}).
\]
\end{corollary}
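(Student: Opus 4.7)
The plan is to translate Theorem \ref{thm:locstrongnonloc} from a convergence statement into an equality of topologies by means of a standard compactness argument. Consider the identity map
\[
  \iota\colon (M(\alpha,\beta;\Omega),\tau_{\textnormal{H}}) \to (M(\alpha,\beta;\Omega),\tau_{\textnormal{bH}}).
\]
Theorem \ref{thm:locstrongnonloc} already yields sequential continuity of $\iota$; invoking metrisability of the source (the classical theorem of Murat and Tartar on $\tau_{\textnormal{H}}$), sequential continuity upgrades to genuine continuity.

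For the reverse direction I would use the standard topological fact that a continuous bijection from a compact space onto a Hausdorff space is automatically a homeomorphism. The source is compact (again by Murat--Tartar), so it is enough to verify that $\tau_{\textnormal{bH}}$ is Hausdorff on $M(\alpha,\beta;\Omega)$. This follows from Lemma \ref{lem:Schur}: the four Schur components $a\mapsto a_{00}^{-1}$, $a\mapsto a_{00}^{-1}a_{01}$, $a\mapsto a_{10}a_{00}^{-1}$, and $a\mapsto a_{11}-a_{10}a_{00}^{-1}a_{01}$ inducing $\tau_{\textnormal{bH}}$ reconstruct $a$ uniquely via the formula for the inverse in that lemma, and each takes values in a space carrying the Hausdorff weak operator topology. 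Hence $\iota$ is a homeomorphism and the two topologies coincide.

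An entirely sequential alternative, which I would also spell out in the actual write-up, goes as follows: given $a_n\to a$ in $\tau_{\textnormal{bH}}$, extract from $(a_n)_n$ a subsequence $\tau_{\textnormal{H}}$-convergent to some $a'\in M(\alpha,\beta;\Omega)$ using compactness of $\tau_{\textnormal{H}}$, then apply Theorem \ref{thm:locstrongnonloc} to that subsequence and exploit Hausdorffness of $\tau_{\textnormal{bH}}$ to conclude $a'=a$; the usual subsequence principle then yields $a_n\to a$ in $\tau_{\textnormal{H}}$. This needs sequential characterisations, which are available on both sides by metrisability (Theorem \ref{thm:sep} for $\tau_{\textnormal{bH}}$ on $\tau_{\textnormal{bH}}$-bounded sets, Murat--Tartar for $\tau_{\textnormal{H}}$).

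The only subtle point, which I expect to be the main (small) obstacle, is to identify the relative topology correctly: one should observe that the trace of $\tau_{\textnormal{bH}}$ on the bounded subset $M(\alpha,\beta;\Omega)$ agrees with the trace of the non-inductive topology $\tau(\ran(\gradc),\ran(\gradc)^\bot)$, since $M(\alpha,\beta;\Omega)$ is $\tau_{\textnormal{bH}}$-bounded (as was already noted at the outset of the proof of Theorem \ref{thm:exunitop}). Once this identification is made, the corollary reduces to the ingredients listed above and no further analysis is required.
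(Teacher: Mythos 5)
Your proposal is correct and follows the same route as the paper's own proof: the identity map $(M(\alpha,\beta;\Omega),\tau_{\textnormal{H}})\to(M(\alpha,\beta;\Omega),\tau_{\textnormal{bH}})$ is continuous by Theorem~\ref{thm:locstrongnonloc} (upgraded from sequential continuity via metrisability of $\tau_{\textnormal{H}}$), and a continuous bijection from a compact space onto a Hausdorff space is a homeomorphism. The only difference is that you make explicit the Hausdorffness of $\tau_{\textnormal{bH}}$ (the Schur components separate points) and the fact that on the bounded set $M(\alpha,\beta;\Omega)$ the inductive topology $\tau_{\textnormal{bH}}$ restricts to $\tau(\ran(\gradc),\ran(\gradc)^\bot)$ — details the paper leaves implicit — so the verification is sound and complete.
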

\begin{proof}
 The fundamental theorem of local $H$-convergence states that $(M(\alpha,\beta;\Omega),\tau_{\textnormal{H}})$ is a compact metric space. $ (M(\alpha,\beta;\Omega),\tau_{\textnormal{bH}})$ is a Hausdorff topological space. By Theorem \ref{thm:locstrongnonloc}, $\id\colon (M(\alpha,\beta;\Omega),\tau_{\textnormal{H}}) \to   (M(\alpha,\beta;\Omega),\tau_{\textnormal{bH}})$ is continuous. Hence, $ (M(\alpha,\beta;\Omega),\tau_{\textnormal{H}})=   (M(\alpha,\beta;\Omega),\tau_{\textnormal{bH}})$.
\end{proof}
 \begin{proof}[Proof of Theorem \ref{thm:locn}] This is Corollary 
 \ref{cor:locn}.
 \end{proof}
\section{Conclusion}\label{sec:concl}

In this article we have introduced a nonlocal $H$-convergence topology for $\Omega$ not necessarily satisfying $\cH_D(\Omega)=\{0\}$. For the case of trivial $\cH_D(\Omega)$, this topological space coincides with the topology of nonlocal $H$-convergence introduced in \cite{W18_NHC}. Moreover, for local coefficients, the newly introduced topology coincides with the topology of local $H$-convergence as introduced by Murat and Tartar in particular if $\dim(\cH_D(\Omega))<\infty$.

We have analysed electrostatic equations with coefficients converging in the newly introduced topology thus obtaining new, nonlocal homogenisation results. These results furthermore led to new compactness results akin to the Picard--Weber--Weck selection theorem with moving coefficients. This type of compactness has potential impact on nonlinear static Maxwell problems, which will be addressed in future publications. Also, the presented notion can be generalised to different boundary conditions as well as other differential operators using the notion of compact Hilbert complexes of linear operators. This, too, will be subject of a future text on this topic.

\section*{Acknowledgements} 

The author wishes to thank Paula Ostmann, Helga Ostmann, Neewa Nopsi and Andreas Buchinger for useful discussions helping to significantly improve the paper. The authors is indebted to the anonymous referee for their thorough and detailed review helping to significantly improve the presentation. The support of the GrK2583/1 during an early stage of this research is gratefully acknowledged.

\bibliographystyle{abbrv}

\end{document}